\numberwithin{equation}{section}
\newcommand{\be}{\begin{equation}}
\newcommand{\ee}{\end{equation}}
\newtheorem{thm}{Theorem}[section]
\newtheorem{ex}[thm]{Example}
\newtheorem{cor}[thm]{Corollary}   
\newtheorem{lem}[thm]{Lemma}   
\newtheorem{prop}[thm]{Proposition}
\newtheorem{defn}[thm]{Definition}
\newtheorem{rmrk}[thm]{Remark}   
\newcommand{\GHto}{\stackrel { \textrm{GH}}{\longrightarrow} }
\newcommand{\SWIFto}{\stackrel {\textrm{SWIF}}{\longrightarrow} }
\newcommand{\Fto}{\stackrel {\mathcal{F}}{\longrightarrow} }
\newcommand{\VFto}{\stackrel {\mathcal{VF}}{\longrightarrow} }
\newcommand{\mass}{{\mathbf M}}
\newcommand{\diam}{\operatorname{diam}}
\newcommand{\vol}{\operatorname{vol}}
\newcommand{\set}{\operatorname{set}}
\newcommand{\spt}{\operatorname{spt}}
\newcommand{\Lip}{\operatorname{Lip}}
\begin{document}

\title[Monotone Sequences of Metric Spaces]{Monotone Sequences of Metric Spaces with Compact Limits}

\author[R. Perales]{R. Perales}
\thanks{R. Perales' research was partially funded by NSF DMS 1612049 on {\em Geometric Compactness Theorems}.}
\address[R. Perales]{Centro de Investigaci\'on en Matem\'aticas, 
De Jalisco s/n, Valenciana, Guanajuato, Gto. Mexico. 36023}
\email{raquel.perales@cimat.mx}

\author{C. Sormani}
\thanks{C. Sormani's research was partially funded by NSF DMS 1612049 on {\em Geometric Compactness Theorems} and a PSC-CUNY grant.}
\address[C. Sormani]{CUNY Graduate Center and Lehman College}
\email{sormanic@gmail.com}

\date{}

\keywords{Metric Spaces, Riemannian, Gromov-Hausdorff, Intrinsic Flat}

\begin{abstract}
In this paper,
we consider a fixed metric space (possibly an oriented Riemannian manifold with boundary) with an increasing sequence of distance functions  and a uniform upper bound on diameter.   When the fixed space endowed with the pointwise limit of these distances is compact, then there is uniform and Gromov-Hausdorff (GH) convergence to this space.   When the fixed metric space also has an integral current structure  of uniformly bounded total mass (as is true for
an oriented Riemannian manifold with boundary that has a uniform bound on total volume), we prove volume preserving intrinsic flat convergence to a subset of the GH limit whose closure is the whole GH limit.   We provide a review of all notions and have a list of open questions at the end.   {\em Dedicated to Xiaochun Rong}.
\end{abstract}
\maketitle

\section{Introduction}

Our goal is to teach the notions of Gromov-Hausdorff (GH) and Sormani-Wenger intrinsic flat (SWIF) convergence while proving a new theorem that has no assumptions on curvature.  The notion of GH distance between metric spaces was first introduced by Edwards in \cite{Edwards}
and deeply explored by Gromov in 
\cite{Gromov-text} and \cite{Gromov-poly}.  
See Rong's 2010 survey \cite{Rong-survey}
for many applications of GH convergence to sequences of Riemannian manifolds with curvature bounds. 

The notion of SWIF distance between integral current spaces was 
introduced by 
Sormani and Wenger in \cite{SW-JDG} applying the
theory of Ambrosio-Kirchheim in \cite{AK} and
work of Wenger in \cite{Wenger-weak-flat}. 
Volume preserving intrinsic flat ($\mathcal{VF}$) convergence was introduced by Portegies in \cite{Portegies-evalue}.
See
Sormani's survey \cite{Sor-survey-2017} for many applications of SWIF
convergence to sequences of Riemannian manifolds with lower bounds on their scalar curvature.
See also papers by Allen, Bryden, Huang, Jauregui, Lakzian, Lee, Perales, Portegies, Sormani, and Wenger which prove SWIF and $\mathcal{VF}$ convergence theorems and present counterexamples \cite{SW-JDG}\cite{Wenger-compactness}\cite{LS2013}
\cite{AS-contrasting}\cite{AS-relating}
\cite{HLSa}\cite{APS-VADB-JDG}\cite{AP-VADB-bndry}
\cite{AB-VADB-bndry-II}\cite{Jauregui-Lee-VF}\cite{Jauregui-Perales-Portegies}. 

The work in this paper is inspired by a theorem in the appendix of \cite{HLSa} by Huang-Lee-Sormani 
(which applies to sequences with biLipschitz bounds on their distances) and a theorem
within \cite{APS-VADB-JDG} by Allen-Perales-Sormani  (which assumes only volume converging from above and distance from below but requires the limit space to be a compact smooth oriented Riemannian manifold).  
Here we will only assume the
limit space is a compact metric space but we add the hypothesis that the distances are monotone increasing.  Our result will be applied by Sormani-Tian-Yeung in \cite{STY-ex} to prove SWIF and GH convergence to the extreme limits constructed by Sormani-Tian-Wang in \cite{STW-ex}.

\begin{thm}\label{thm:Riem}
Given a compact connected Riemannian manifold, $(M^m,g_0)$,
possibly with boundary, with a monotone increasing sequence of Riemannian 
metric tensors $g_j$ such that
\be\label{eq:mono-g}
g_j(V,V)\ge g_{j-1}(V,V) \qquad \forall V\in TM
\ee
with uniform bounded diameter,
\be\label{eq:diam}
\diam_{g_j}(M)\le D_0
\qquad \forall j \in {\mathbb N}.
\ee
Then the induced length distance functions $d_j: M\times M\to [0,D_0]$
are monotone increasing and converge pointwise to
a distance function,
$d_\infty: M\times M\to [0,D_0]$
so that $(M,d_\infty)$ is
a metric space. 

If the metric space $(M, d_\infty)$ is a compact metric space,
then $d_j\to d_\infty$ uniformly and
we have Gromov-Hausdorff (GH) convergence,
\be\label{eg:GH}
(M,d_j) \GHto (M,d_\infty).
\ee

If, in addition, $M$ is an oriented manifold 
with uniform bounds on volume
and boundary volume,
\be \label{eq:vol}
\vol_j(M)\le V_0 \textrm{ and }\vol_j(\partial M)\le A_0
\qquad \forall j\in {\mathbb N}
\ee
then we have volume preserving intrinsic flat ($\mathcal{VF}$) convergence
\be \label{eq:VF-Riem}
(M,d_j, [[M]]) \VFto (M_\infty,d_\infty,T_\infty),
\ee
where $T_\infty$ is an 
integral current on $(M,d_\infty)$ such
that ``$T_\infty=[[M]]$ viewed
as an integral current on $(M,d_j)$''
and
\be \label{eq:set-Riem}
M_\infty=\set_{d_\infty}(T_\infty)\subset M \textrm{ with closure }
\overline{M}_\infty=M.
\ee
\end{thm}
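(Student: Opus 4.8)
My plan is to treat the three conclusions in order. For the distances: since $g_j(V,V)\ge g_{j-1}(V,V)$ forces $|V|_{g_j}\ge|V|_{g_{j-1}}$, every curve satisfies $L_{g_j}(\gamma)\ge L_{g_{j-1}}(\gamma)$, so the induced length distances obey $d_j\ge d_{j-1}$; being monotone and bounded above by $D_0$ via \eqref{eq:diam}, each $d_j(p,q)$ converges pointwise to some $d_\infty(p,q)\in[0,D_0]$. Symmetry and the triangle inequality pass to the pointwise limit, and positivity of $d_\infty$ is inherited from $d_\infty\ge d_0$ because $(M,d_0)$ is a metric space; hence $(M,d_\infty)$ is a metric space.

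For the uniform and GH convergence I would assume $(M,d_\infty)$ compact. The triangle inequality together with $d_j\le d_\infty$ gives $|d_j(p,q)-d_j(p',q')|\le d_\infty(p,p')+d_\infty(q,q')$, so each $d_j$, and $d_\infty$ itself, is $1$-Lipschitz hence continuous on the compact space $M\times M$ with the sum metric $d_\infty\oplus d_\infty$. Since $d_j\uparrow d_\infty$ pointwise with continuous limit, Dini's theorem gives uniform convergence; set $\epsilon_j:=\sup_{M\times M}(d_\infty-d_j)\to 0$. The diagonal $\{(x,x):x\in M\}$ is then a correspondence between $(M,d_j)$ and $(M,d_\infty)$ of distortion $\epsilon_j$, so $d_{GH}\big((M,d_j),(M,d_\infty)\big)\le\epsilon_j/2\to 0$, which is \eqref{eg:GH}.

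For \eqref{eq:VF-Riem}, let $M$ be oriented with \eqref{eq:vol}, and let $\iota_j\colon(M,d_\infty)\to(M,d_j)$ be the identity, $1$-Lipschitz since $d_j\le d_\infty$. Since $g_j\ge g_{j-1}$ gives $\det g_j\ge\det g_{j-1}$, the volumes $\vol_j(M)$ and $\vol_j(\partial M)$ increase to limits $V_\infty\le V_0$, $A_\infty\le A_0$, with $\vol_j(M)\ge\vol_0(M)>0$. Let $T_j$ be $[[M]]$ viewed as an integral current on $(M,d_j)$, the integration current $\int_M f\,d\pi_1\wedge\cdots\wedge d\pi_m$ of $(M,g_j)$, so $\mass(T_j)=\vol_j(M)$, $\mass(\partial T_j)=\vol_j(\partial M)$, and for $j\le k$ the identity $(M,d_k)\to(M,d_j)$ pushes $T_k$ forward to $T_j$ (both being this integration current, and $d_j$-Lipschitz tuples being $d_k$-Lipschitz). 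The uniform bounds $\mass(T_j)+\mass(\partial T_j)\le V_0+A_0$ and $\diam_{d_j}(M)\le D_0$ let Wenger's compactness theorem \cite{Wenger-compactness} give a subsequence $\mathcal F$-converging to an integral current space $(X,d_X,S)$. To identify $S$ I would fix $j$ and observe that the constant sequence $T_j=(\text{identity})_\#T_k$ $(k\ge j)$ $\mathcal F$-converges, by continuity of pushforward into the fixed space $(M,d_j)$ under the intrinsic flat convergence of the sources $(M,d_k)\to(M,d_\infty)$, to the pushforward of $S$ by the identity; uniqueness of $\mathcal F$-limits forces that pushforward to equal $T_j\ne 0$, so $S\ne\mathbf 0$, and the Sormani--Wenger compatibility of GH and $\mathcal F$ limits \cite{SW-JDG} embeds $X$ isometrically in $(M,d_\infty)$. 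Writing $T_\infty$ for the image current and $M_\infty:=\set_{d_\infty}(T_\infty)$, the identification reads $(\iota_j)_\#T_\infty=T_j$, the assertion accompanying \eqref{eq:VF-Riem}.

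It then remains to check the mass condition and \eqref{eq:set-Riem}. From $\vol_j(M)=\mass(T_j)=\mass((\iota_j)_\#T_\infty)\le\mass(T_\infty)$ one gets $V_\infty\le\mass(T_\infty)$ as $j\to\infty$, while lower semicontinuity of mass under $\mathcal F$-convergence gives $\mass(T_\infty)\le\liminf\mass(T_{j_k})=V_\infty$; hence $\mass(T_\infty)=V_\infty=\lim_j\mass(T_j)$, which is precisely $\mathcal{VF}$ convergence, and a standard ``every subsequence has a further subsequence'' argument upgrades it to the full sequence. For \eqref{eq:set-Riem}, if $\overline{M}_\infty\ne M$ I would pick $p\in M$ and $r>0$ with $B_{d_\infty}(p,r)\cap\overline{M}_\infty=\emptyset$; then $T_\infty\llcorner B_{d_\infty}(p,r)=0$, so $T_j\llcorner B_{d_\infty}(p,r)=(\iota_j)_\#\big(T_\infty\llcorner B_{d_\infty}(p,r)\big)=0$, yet its mass is $\vol_j\big(B_{d_\infty}(p,r)\big)\ge\vol_0\big(B_{d_\infty}(p,r)\big)>0$ since $B_{d_\infty}(p,r)$ is nonempty and open in the $d_\infty$-topology, which coincides with the manifold topology because $(M,d_\infty)$ is compact --- a contradiction, so $\overline{M}_\infty=M$. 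The hard part will be the identification of $S$ above: formulating and applying ``continuity of pushforward under intrinsic flat convergence'' when the metric spaces vary, and ensuring that the embedding furnished by the GH--$\mathcal F$ compatibility respects the identity set maps $M\to M$. What makes this tractable is the monotonicity $d_j\le d_\infty$: it lets $(M,d_j)$ and $(M,d_\infty)$ sit isometrically inside $M\times\{0,\epsilon_j\}$ with the metric extending $d_j$ and $d_\infty$ on the two slices and $\hat d\big((x,0),(y,\epsilon_j)\big)=\epsilon_j+d_j(x,y)$, where a filling cylinder of mass $\le\epsilon_j\big(\mass(T_j)+\mass(\partial T_j)\big)$ bounds the flat distance and pins the abstract limit to $(\iota_j)_\#T_\infty=T_j$.
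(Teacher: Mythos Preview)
Your first two parts are correct, and for uniform convergence your use of Dini's theorem is cleaner than the paper's route through Arzel\`a--Ascoli plus monotonicity. The GH estimate via the diagonal correspondence is equivalent to the paper's explicit two-slice space $Z_{a,b}$.

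The gap is in Part 3, exactly where you flag it. You invoke Wenger's compactness theorem to obtain an abstract SWIF limit $(X,d_X,S)$ and then attempt to identify it with a current on $(M,d_\infty)$. The step ``continuity of pushforward into the fixed space $(M,d_j)$ under intrinsic flat convergence of the sources'' is not a theorem: intrinsic flat convergence furnishes no map from the abstract limit $X$ to $M$, so ``the pushforward of $S$ by the identity'' has no meaning until $X$ has already been identified with a subset of $M$. The Sormani--Wenger GH/SWIF compatibility gives only \emph{some} isometric embedding $X\hookrightarrow(M,d_\infty)$, not one that intertwines with the set-theoretic identities $M\to M$, so the equation $(\iota_j)_\# T_\infty=T_j$ remains unproved. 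Your closing cylinder sketch does not rescue this: building a filling between the two slices presupposes that $[[M]]$ is already an integral current of finite mass on $(M,d_\infty)$, and that is not automatic---charts that are Lipschitz into $(M,d_j)$ need not be Lipschitz into $(M,d_\infty)$ (cf.\ Remark~\ref{rmrk:current}), and $(M,d_\infty)$ may well be singular as in Example~\ref{ex:lim-cusp}.

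The paper closes this gap by working in an explicit common space from the outset. All the $(M,d_j)$ and $(M,d_\infty)$ are embedded as pages of a ``book'' $Z$ glued along a spine $Z_0\cong(M,d_\infty)$ (Proposition~\ref{prop:common-Z}); Ambrosio--Kirchheim compactness in the compact $Z$ produces a weak limit $\hat T_\infty$ supported on $Z_0$, which pulls back to a concrete $T_\infty$ on $(M,d_\infty)$. The identification ``$T_\infty=T$ viewed on $(M,d_j)$'' is then proved (Proposition~\ref{prop:Tinfty}) using an explicit $1$-Lipschitz retraction $F_j:W_j\to(M,d_j)$ from a neighborhood of the spine (Lemma~\ref{lem:W-to-X}), which lifts $d_j$-Lipschitz tuples to $Z$ so that weak convergence can be tested directly. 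Only after $T_\infty$ exists and the identification is established does the cylinder estimate (Proposition~\ref{prop:TaTb}) enter, giving the explicit SWIF bound you wrote down. Your mass squeeze and your closure argument $\overline{M}_\infty=M$ are essentially the paper's once that identification is in place.
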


In fact, we prove more general statements that do not require $M$ to be a manifold. See
Theorem~\ref{thm:GH-cmpct} within for GH convergence of a monotone sequence of metric spaces with a uniform upper bound on diameter whose distance functions converge pointwise to the distance function of a compact metric space.  See Theorem~\ref{thm:SWIF-cmpct}
and
Definition~\ref{defn:equal-current}
within for ${\mathcal{VF}}$ and SWIF convergence.  To keep the introduction simple, we do not state these results here.

Note that Sormani-Wenger already proved that any GH converging sequence satisfying (\ref{eq:vol}) has a 
subsequence converging to a SWIF limit that is a possibly empty subset of the GH limit in \cite{SW-JDG}.   In the Riemannian theorem above we use the
monotonicity to force the whole sequence to converge and to force the closure of the SWIF limit to agree with the GH limit. We will see in Example~\ref{ex:lim-cusp} within that an increasing sequence can converge to a manifold with a cusp singularity, and since a cusp singularity is not included in the definition of an integral current space, this example
has $M_\infty=\set(T_\infty)\neq M$.   

We will prove this Riemannian theorem in stages as consequences of more general results concerning metric spaces
and integral current spaces.   We will review the various notions of convergence right before applying them, so that this article may be easily read by a novice.   Note that GH, SWIF, and $\mathcal{VF}$ limit spaces are defined using distance functions and do not necessarily agree with the metric completions of limits spaces
found by taking smooth convergence of
Riemannian metric tensors away from singular sets  (see work of Allen-Sormani \cite{AS-contrasting}).

In Section~\ref{sect:dist} we prove a few
basic lemmas about metric spaces with monotone increasing sequences of distance functions including a review of the Riemannian distance functions.  In Lemma~\ref{lem:diam} we prove pointwise convergence
of the monotone increasing sequence of distance functions with a uniform upper bound on diameter.  In Example~\ref{ex:not-unif} we see that we need not have uniform convergence when the limit is noncompact.

In Section~\ref{sect:GH} 
we review the definition of Gromov-Hausdorff convergence (Definition~\ref{defn:GH}).  We then prove the GH and uniform convergence part of
Theorem~\ref{thm:Riem}.  In fact, we prove a more general theorem concerning a metric space with a monotone increasing sequence of distance functions that
converge pointwise to a compact limit in Theorem~\ref{thm:GH-cmpct}.  We show that the sequence converges uniformly and in the GH sense.
We prove this theorem using different techniques than would have been used by Gromov in order to prepare for the SWIF convergence part of the paper. 
In particular, we construct a common metric space for the converging sequence that has special properties in
Proposition~\ref{prop:common-Z} and Lemma~\ref{lem:W-to-X}. In Remark~\ref{rmrk:not-GH} we explain that Example~\ref{ex:not-unif} does not have a Gromov-Hausdorff limit.

In Section~\ref{sect:SWIF} we prove the SWIF convergence
part of Theorem~\ref{thm:Riem}.   In fact, we prove a more general SWIF convergence theorem concerning the convergence of an integral current space with increasing distance functions [Theorem~\ref{thm:SWIF-cmpct}]. We begin by reviewing  De Giorgi's notion of a Lipschitz tuple \cite{DeGiorgi95}, the notion of a Lipschitz chart, and Ambrosio-Kirchheim's notion of an integral current and weak convergence \cite{AK}, proving lemmas about how these objects behave for monotone increasing sequences of distances.  
We review Sormani-Wenger's definition of an integral current space, including $(M,d,[[M]])$, and their definition of intrinsic flat distance.   We then state Theorem~\ref{thm:SWIF-cmpct} and apply it to complete the proof of Theorem~\ref{thm:Riem} in Section~\ref{sect:Riem}. Finally we prove Theorem~\ref{thm:SWIF-cmpct}, first finding a
converging subsequence and a limit current structure in Proposition~\ref{prop:Tinfty} and completing the proof in Section~\ref{sect:pf-SWIF} applying Proposition~\ref{prop:TaTb} to estimate the SWIF distance.

In Section~\ref{sect:Open}
we state open problems.   

The authors would like to thank Wenchuan Tian and Changliang Wang for inspiring discussions of examples that motivated this paper.  We would also like to thank the referee and students, Wai Ho Yeung and Kevin Bui, for careful reading of various parts of the first version of this paper and helpful feedback.

\section{Pointwise Convergence of Monotone Increasing Distance Functions}
\label{sect:dist}

\begin{defn}\label{defn:metric-space}
A metric space, $(X,d)$, is a collection of
points, $X$, paired with a distance function
$d: X\times X\to [0,\infty)$ which is
definite,
\be
d(x,y)=0 \,\,\iff\,\, x=y,
\ee
symmetric, 
\be
d(x,y)=d(y,x)\quad \forall x,y \in X,
\ee
and satisfies the triangle inequality,
\be
d(x,y)\le d(x,z)+d(z,y) \qquad \forall x,y,z\in X.
\ee
\end{defn}

Here we consider metric spaces $(X,d)$ of finite diameter,
\be
\diam_d(X)=\sup\{d(x,y):\,x,y\in X\}<\infty.
\ee
We also need the following definition:

\begin{defn}\label{defn:mono}
We say that $d_j:X\times X\to [0,\infty)$ is a monotone increasing sequence of distance functions on $X$, if for all $j\in {\mathbb{N}}$ one has that $(X,d_j)$ is a metric space and
\be
d_{j+1}(x,y)\ge d_j(x,y) \quad \forall x,y\in X.
\ee
\end{defn}

\subsection{Riemannian Distances}

Recall that a Riemannian manifold, $(M,g)$,
has a natural metric space, $(M, d_g)$, defined
using the induced Riemannian distance as follows.
The lengths of piecewise smooth
curves, $C:[0,1]\to M$, are defined by
\be\label{eq:length}
L_g(C)=\int_0^1 g(C'(t), C'(t))^{1/2} \, dt
\ee
and the Riemannian distance between points
is defined by
\be\label{eq:d_g}
d_g(x,y)=\inf \{ L_g(C) \}
\ee
where the infimum is taken over all piecewise smooth curves, $C:[0,1]\to M$, such that
$C(0)=x$ and $C(1)=y$.   The diameter of $(M, d_g)$ is then defined as
\be\label{eq:diam-defn}
\diam_{g}(M)=\diam_{d_g}(M)=\sup \{d_g(x,y):\, x,y\in M\}.
\ee

\begin{lem}\label{lem:Riem-ineq}
If $M$ is a Riemannian manifold with
two Riemannian metric tensors $g_a$
and $g_b$ such that
\be
g_a(V,V)\le g_b(V,V) \quad \forall V\in TM
\ee 
then the corresponding induced
Riemannian distances $d_a=d_{g_a}$
and $d_b=d_{g_b}$ satisfy
\be
d_a(x,y) \le d_b(x,y) \qquad \forall x,y\in X.
\ee 
\end{lem}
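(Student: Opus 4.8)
The plan is to compare the two distance functions curve-by-curve and then pass to the infimum. Fix $x,y\in M$ and let $C:[0,1]\to M$ be an arbitrary piecewise smooth curve with $C(0)=x$ and $C(1)=y$. At each $t$ where $C$ is differentiable, apply the hypothesis to the tangent vector $V=C'(t)\in TM$ to get $g_a(C'(t),C'(t))\le g_b(C'(t),C'(t))$, and then use that $s\mapsto s^{1/2}$ is monotone nondecreasing on $[0,\infty)$ to conclude $g_a(C'(t),C'(t))^{1/2}\le g_b(C'(t),C'(t))^{1/2}$ pointwise in $t$. Integrating this pointwise inequality over $[0,1]$ and recalling the definition (\ref{eq:length}) yields $L_{g_a}(C)\le L_{g_b}(C)$ for every such curve $C$.

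Next I would take infima. Since the class of piecewise smooth curves from $x$ to $y$ is the \emph{same} set for both metric tensors, the inequality $L_{g_a}(C)\le L_{g_b}(C)$ holding for each curve $C$ in this class implies, by definition (\ref{eq:d_g}), that $d_a(x,y)=\inf_C L_{g_a}(C)\le \inf_C L_{g_b}(C)=d_b(x,y)$. (Concretely: for each fixed $C$ one has $\inf_{C'} L_{g_a}(C')\le L_{g_a}(C)\le L_{g_b}(C)$, and taking the infimum over $C$ on the right gives the claim.) As $x,y$ were arbitrary, this is exactly the asserted inequality $d_a(x,y)\le d_b(x,y)$ for all $x,y$.

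There is essentially no serious obstacle here; the only points requiring a word of care are that the comparison of lengths must be done at the level of the integrands before integrating (so that the pointwise tensor inequality can be used), and that both infima in (\ref{eq:d_g}) range over the identical family of admissible curves, so no curves available to one metric are unavailable to the other. The presence of a boundary on $M$ causes no difficulty, since admissible curves are simply allowed to meet $\partial M$ and the length functional is unchanged. If one wishes, one can also note in passing that the conclusion is consistent with, and in fact a special case of the reasoning behind, the monotonicity statement used later for the sequence $g_j$ in Theorem~\ref{thm:Riem}.
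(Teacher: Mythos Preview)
Your proof is correct and is exactly the argument the paper has in mind: the paper does not spell out a proof but simply says it is ``an easy exercise applying (\ref{eq:length}) and (\ref{eq:d_g}),'' which is precisely the pointwise length comparison followed by taking the infimum over the common class of admissible curves that you wrote.
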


The proof of Lemma~\ref{lem:Riem-ineq} is an easy exercise applying
\ref{eq:length} and \ref{eq:d_g}.

\begin{lem}\label{lem:Riem-mono}
Consider a fixed Riemannian manifold, $(M,g_0)$,
with a sequence of 
functions, $h_j: M \to [0,\infty)$, such that
the gradients have magnitudes
satisfying,
\be \label{eq:magnitude}
|\nabla h_j|_{g_0}\le | \nabla h_{j+1}|_{g_0}
\ee
everywhere on $M$,
and have the
same direction,
\be \label{eq:direction}
\frac{\nabla h_j}{\,\,\,\,|\nabla h_j|_{g_0}}=\frac{\nabla h_{j+1}}{\,\,\,\,|\nabla h_{j+1}|_{g_0}}
\ee
whenever $|\nabla h_j|_{g_0}>0$.
Then the
sequence of Riemannian metrics 
\be
g_j = g_0 + dh_j^2 
\ee
induced by the graphs of the functions, $h_j$,
is a 
monotone increasing sequence
of metric tensors, $g_j$ as in
as in (\ref{eq:mono-g}), and defines
a monotone
increasing sequence of distance functions as
in Definition~\ref{defn:mono}.
\end{lem}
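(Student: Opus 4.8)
The plan is to reduce the monotonicity of the metric tensors to a pointwise inequality on tangent vectors, verify that inequality directly from the magnitude and direction hypotheses, and then quote Lemma~\ref{lem:Riem-ineq} to pass to the induced distances. First I would record that each $g_j=g_0+dh_j^2$ really is a Riemannian metric tensor: since $dh_j^2(V,V)=(dh_j(V))^2\ge 0$, one has $g_j(V,V)\ge g_0(V,V)>0$ for $V\neq 0$, and smoothness is inherited from $g_0$ and $h_j$. In particular each $(M,d_{g_j})$ is a metric space, so it makes sense to set $d_j:=d_{g_j}$.

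Next, fix $p\in M$ and $V\in T_pM$ and write $dh_j(V)=g_0(\nabla h_j,V)$. The claimed inequality $g_j(V,V)\ge g_{j-1}(V,V)$ is then equivalent to $|g_0(\nabla h_j,V)|\ge|g_0(\nabla h_{j-1},V)|$. If $|\nabla h_{j-1}|_{g_0}(p)=0$, the right-hand side vanishes and there is nothing to prove. Otherwise (\ref{eq:magnitude}) forces $|\nabla h_j|_{g_0}(p)>0$ as well, so the direction hypothesis (\ref{eq:direction}) gives $\nabla h_j(p)=\lambda\,\nabla h_{j-1}(p)$ with $\lambda=|\nabla h_j|_{g_0}(p)/|\nabla h_{j-1}|_{g_0}(p)\ge 1$, again by (\ref{eq:magnitude}). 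Hence $|g_0(\nabla h_j,V)|=\lambda\,|g_0(\nabla h_{j-1},V)|\ge|g_0(\nabla h_{j-1},V)|$. Since $p$ and $V$ were arbitrary, $g_j(V,V)\ge g_{j-1}(V,V)$ for all $V\in TM$, which is exactly (\ref{eq:mono-g}).

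Finally, I would apply Lemma~\ref{lem:Riem-ineq} with $g_a=g_{j-1}$ and $g_b=g_j$ to conclude $d_{j-1}(x,y)\le d_j(x,y)$ for all $x,y\in M$, so that $(d_j)$ is a monotone increasing sequence of distance functions in the sense of Definition~\ref{defn:mono}. I do not expect a genuine obstacle in this argument; the only point that deserves a sentence of care is the case in which $\nabla h_{j-1}$ vanishes at a point while $\nabla h_j$ need not, which is handled immediately because $dh_{j-1}(V)=0$ there and the required inequality is trivial.
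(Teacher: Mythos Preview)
Your proposal is correct and follows essentially the same approach as the paper: both reduce to showing $g_0(\nabla h_j,V)^2\ge g_0(\nabla h_{j-1},V)^2$ pointwise by splitting into the cases $|\nabla h_{j-1}|_{g_0}=0$ and $|\nabla h_{j-1}|_{g_0}>0$, using the direction hypothesis in the latter to get proportionality and the magnitude hypothesis to get $\lambda\ge 1$, and then invoking Lemma~\ref{lem:Riem-ineq}. The only differences are cosmetic (you phrase the proportionality via a scalar $\lambda$ and add a sentence checking $g_j$ is positive definite).
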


\begin{proof}
By Lemma~\ref{lem:Riem-ineq},
we need only show (\ref{eq:mono-g}).  Recall that
\be
g_{j}(V,V)=
g_0(V,V) + dh_j^2(V,V)
=
g_0(V,V) + g_0(\nabla h_j, V)^2.
\ee
At points where $|\nabla h_j|_{g_0}=0$, we thus have
\be
g_{j}(V,V)=
g_0(V,V)+0
\le
g_0(V,V) + dh_{j+1}^2(V,V)=g_{j+1}(V,V).
\ee
At points where $|\nabla h_j|_{g_0}>0$, the gradients have
the same direction
so
\begin{eqnarray}
g_{j}(V,V)&=&
g_0(V,V) + g_0(\nabla h_j, V)^2\\
&=&
g_0(V,V) + 
g_0(\tfrac{\nabla h_j}{|\nabla h_j|}, V)^2\, |\nabla h_j|^2\\
&=&
g_0(V,V) + 
g_0(\tfrac{\nabla h_{j+1}}{|\nabla h_{j+1}|}, V)^2\, |\nabla h_j|^2\\
&\le &
g_0(V,V) +
g_0(\tfrac{\nabla h_{j+1}}{|\nabla h_{j+1}|}, V)^2\,
|\nabla h_{j+1}|^2\\
&=&
g_0(V,V) + dh_{j+1}^2(V,V)=g_{j+1}(V,V).
\end{eqnarray}
This completes the proof
that (\ref{eq:mono-g}) holds at all points in 
$M$.
\end{proof}

\subsection{Pointwise convergence}

\begin{lem}\label{lem:diam}
Suppose that $d_j:X\times X\to [0,\infty)$ is a monotone increasing sequence of distance functions on $X$
as in Definition~\ref{defn:mono}
and that there is
a uniform upper bound on diameter,
\be
\diam_j(X)=\sup_{x,y\in X}d_j(x,y) \le D_0\in (0,\infty)
\ee
then there is a limit distance function:
\be\label{eq:diam-0}
d_\infty(x,y)=\lim_{j\to \infty}d_j(x,y) =
\sup_{j\in {\mathbb N}} d_j(x,y) \in [0,D_0]
\ee
and $(X, d_\infty)$ is a metric
space with diameter $\diam_\infty(X)\le D_0$.
\end{lem}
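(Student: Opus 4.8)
The plan is to produce $d_\infty$ by a monotone convergence argument on real sequences, then check that the three metric axioms pass to the limit, and finally read off the diameter bound directly from the supremum formula.

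First I would fix an arbitrary pair $x,y\in X$. By Definition~\ref{defn:mono} the real sequence $\{d_j(x,y)\}_{j\in{\mathbb N}}$ is nondecreasing, and by the uniform diameter hypothesis it is bounded above by $D_0$; hence it converges, and its limit equals its supremum. Defining $d_\infty(x,y)$ to be this common value gives (\ref{eq:diam-0}) together with $d_\infty(x,y)\in[0,D_0]$, so that $\diam_\infty(X)\le D_0$ as soon as $d_\infty$ is known to be a distance function. This is the one place the uniform bound on diameter enters: without it the supremum could equal $+\infty$ and $d_\infty$ would fail to be real-valued.

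Next I would check that $d_\infty$ satisfies the axioms of Definition~\ref{defn:metric-space}. Symmetry is immediate, since $d_\infty(x,y)=\lim_{j\to\infty}d_j(x,y)=\lim_{j\to\infty}d_j(y,x)=d_\infty(y,x)$ because each $d_j$ is symmetric. For the triangle inequality, fix $x,y,z\in X$; for every $j$ one has $d_j(x,z)\le d_j(x,y)+d_j(y,z)\le d_\infty(x,y)+d_\infty(y,z)$, using the triangle inequality for $d_j$ and then monotonicity, and taking the supremum over $j$ on the left yields $d_\infty(x,z)\le d_\infty(x,y)+d_\infty(y,z)$. For definiteness, if $x=y$ then $d_j(x,y)=0$ for all $j$, hence $d_\infty(x,y)=0$; conversely, if $d_\infty(x,y)=0$, then $0\le d_1(x,y)\le d_\infty(x,y)=0$ forces $d_1(x,y)=0$, and since $(X,d_1)$ is already a metric space this gives $x=y$.

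I do not expect a genuine obstacle here: the argument reduces to the monotone convergence theorem for bounded increasing real sequences together with the stability of nonstrict inequalities under suprema. The only mildly delicate points — finiteness of $d_\infty$, which rests on the diameter bound, and definiteness of $d_\infty$, which borrows definiteness from the single metric $d_1$ — are both handled in the steps above, so the proof should be short.
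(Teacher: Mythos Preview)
Your proposal is correct and follows essentially the same approach as the paper: monotone convergence of bounded real sequences yields the pointwise limit, and each metric axiom is verified by passing to the limit (or equivalently the supremum), with definiteness borrowed from one of the $d_j$. The only cosmetic differences are that the paper phrases the triangle inequality via limits rather than suprema and uses a generic $d_j$ instead of $d_1$ for definiteness.
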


\begin{proof}
We know the pointwise limit function $d_\infty: X\times X\to [0,D_0]$ exists and satisfies (\ref{eq:diam-0}) by the monotone convergence theorem for sequences of real numbers.   We can confirm that $d_\infty$ is positive definite as follows:
\be d_{\infty}(x,x)=\lim_{j\to \infty} d_j(x,x)=0
\ee
and
\be
d_{\infty}(x,y)=0
\implies
d_j(x,y)\le 0
\implies x=y.
\ee
It is symmetric for all $x,y\in X$ by
\be
d_\infty(x,y)=
\lim_{j\to \infty} d_j(x,y)
=\lim_{j\to \infty} d_j(y,x)=
d_\infty(y,x) \quad \forall x,y,\in X.
\ee
It satisfies the triangle inequality 
for any $x,y,z\in X$
by
\begin{eqnarray}
d_\infty(x,y)&=&
\lim_{j\to \infty} d_j(x,y)\\
&\le&\lim_{j\to \infty} ( d_j(x,z)+d_j(z,y))\\
&=&\lim_{j\to \infty} d_j(x,z)+
\lim_{j\to \infty} d_j(z,y)\\
&=& d_\infty(x,z)+d_\infty(z,y) \quad \forall x,y,z\in X.
\end{eqnarray}

\end{proof}

\subsection{Example without Uniform Convergence}

Even if $(X, d_j)$ is a monotone increasing sequence of compact metric spaces with a uniform upper bound on diameter,
pointwise convergence does not imply uniform convergence.   See Example~\ref{ex:not-unif}
depicted in Figure~\ref{fig:not-unif}.

\begin{figure}[h] 
   \centering
\includegraphics[width=.8\textwidth]{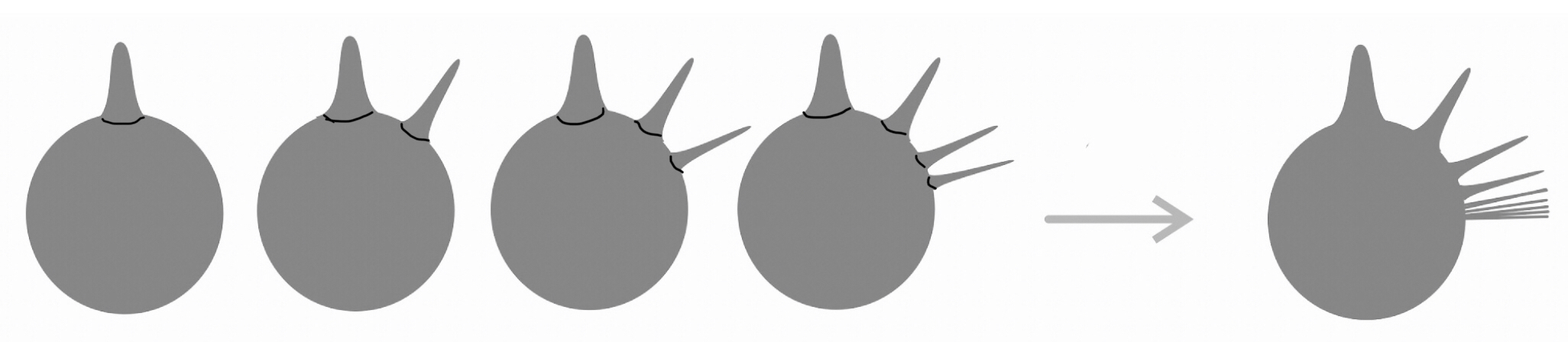} 
\caption{This figure depicts Example~\ref{ex:not-unif}.
   }
   \label{fig:not-unif}
\end{figure}

\begin{ex}\label{ex:not-unif}
Here we adapt an example from Sormani-Wenger \cite{SW-JDG}
as in Figure~\ref{fig:not-unif}.
Consider a sequence of distinct points $p_j \to p_\infty$ lying on the equator in the standard round sphere $({\mathbb S}^2, g_{\mathbb S^2})$.   Let $r_j >0$ be decreasing radii such that the balls $B_{g_{\mathbb S^2}}(p_j,r_j)$ are disjoint and do not contain $p_\infty$.  
We define a sequence of metric tensors 
\be
g_j=g_{{\mathbb S}^2}+dh_j^2
\ee
 inductively starting with
constant $h_0=0$ and taking
\be \label{eq:start-ind}
h_{j+1}(x) = 
\begin{cases}
        h_{j}(x) & \text{if } x\in {\mathbb S}^2\setminus B_{g_{\mathbb S^2}}(p_{j+1},r_{j+1})\\
        H(d_{\mathbb S^2}(p_{j+1},x)/r_{j+1}) & \text{if } x\in B_{g_{\mathbb S^2}}(p_{j+1},r_{j+1})
\end{cases}
\ee
where $H:(-1,1)\to [0,H_0]$ is a fixed smooth even
function of compact support such that $H(0)=H_0>0$
and $H'(r)\le 0$ for $r\ge 0$.  

This implies that, 
for $x$ outside of the disjoint balls about $p_0$ to $p_{i}$,
we have
\be
h_{i+1}(x)=h_i(x)=...=h_0(x)=0. 
\ee
Since the balls are disjoint, 
for all $x\in B_{g_{\mathbb S^2}}(p_j,r_j)$,
$x$ is not in the previous balls, so 
$h_j(x)=0$, so $\nabla h_j=0$.   Elsewhere, by (\ref{eq:start-ind}) we have
$h_j=h_{j+1}$ so $\nabla h_j=\nabla h_{j+1}$.  Thus we can apply Lemma~\ref{lem:Riem-mono}
to see that we have a monotone increasing sequence
of metric tensors.

Note that we have added increasingly thin wells in place of each of the balls.  For all $i\le j$, the $g_j$ radial distance
from $p_i$ to any $q_i\in \partial B_{g_{\mathbb S^2}}(p_j,r_j)$,
\be
d_{g_j}(p_i,q_i)=\int_0^{r_i}
\sqrt{1+(H'(r/r_i) (1/r_i))^2} \, dr.
\ee
Since $a\le \sqrt{1+a^2}\le 1+a$ we have
\be\label{eq:pt-well}
H_0 \le d_{g_j}(p_i,q)\le r_i+H_0 \quad \forall i\le j.
\ee
Since any point in $({\mathbb S}^2,g_j)$ outside of the wells can be joined to a pole travelling a distance
$\le \pi/2$, and poles are a distance $\pi$ apart, we can apply the triangle inequality 
to see that
\be
\diam_{g_j}({\mathbb S}^2)\le (H_0+r_1+\pi/2)+ (H_0+r_1+\pi/2)+\pi.
\ee
This uniform upper bound and Lemma~\ref{lem:diam}
implies there is a pointwise limit, $d_j \to d_\infty$.
Taking the limit of (\ref{eq:pt-well}) we have,
\be
H_0\le d_\infty(p_i,p_\infty)  \quad \forall i\in {\mathbb N}.
\ee
However, for $i>j$ sufficiently large
\be
d_{g_j}(p_i,p_\infty)
\le d_{\mathbb S^2}(p_i,p_\infty)
<H_0/2 
\ee
so there is no uniform convergence. In fact, the limit distance function $d_\infty$ has a sequence of disjoint $g_j$ balls centered at the $p_j$ of radius $H_0$, so $({\mathbb S}^2,d_\infty)$ is not compact.
\end{ex}

\section{GH Convergence to Compact Limits}
\label{sect:GH}

Recall the following definition of the
Gromov-Hausdorff distance first introduced by 
Edwards in \cite{Edwards} and then rediscovered and
studied extensively by Gromov in \cite{Gromov-text}.

\begin{defn}\label{defn:GH}
Given a pair of metric spaces, $(X_a,d_a)$ and $(X_b,d_b)$, the Gromov-Hausdorff distance
between them is
\be
d_{GH}\left((X_a,d_a),(X_b,d_b)\right)=
\inf\{ d_H^Z\left(f_a(X_a), f_b(X_b)\right)\}
\ee
where the infimum is taken over all common metric spaces, $(Z,d_Z)$, and over all distance preserving maps,
\be
f_a: (X_a,d_a)\to (Z,d_Z)
\textrm{ and } f_b: (X_b,d_b)\to (Z,d_Z).
\ee
Here $d_H^Z$ denotes
the Hausdorff distance,
\be
d_H^Z\left(A,B\right)
=\inf \{R \,:\, A\subset T_R(B)
\textrm{ and } B\subset T_R(A)\}
\ee
between subsets $A,B \subset Z$ which is defined
using tubular neighborhoods of a given radius $R$,
\be
T_R(A)=\{z\in Z\,:\, \exists p\in A
\,s.t.\, d_Z(p,z)<R\}.
\ee
\end{defn} 

In this section we prove the following simple theorem:

\begin{thm}\label{thm:GH-cmpct} 
If $(X,d_j)$ is a monotone increasing sequence as in Definition~\ref{defn:mono} with a uniform upper bound $D_0$ on diameter, and if $d_j$ converges pointwise to $d_\infty$ as in Lemma~\ref{lem:diam},
and if $(X, d_\infty)$
is a compact metric space, 
then $d_j$ converge uniformly to $d_\infty$
and we have Gromov-Hausdorff convergence 
as well.  
\end{thm}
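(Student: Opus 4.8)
My plan is to first promote the pointwise convergence $d_j\to d_\infty$ furnished by Lemma~\ref{lem:diam} to \emph{uniform} convergence by a Dini-type argument, and then to encode the resulting uniform estimate in an explicit common metric space that realizes the Gromov--Hausdorff convergence of Definition~\ref{defn:GH}.

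\medskip
\noindent\emph{Step 1: uniform convergence.} Since $d_j\le d_{j+1}\le d_\infty=\sup_k d_k$ by Definition~\ref{defn:mono} and Lemma~\ref{lem:diam}, the identity map $\mathrm{id}\colon (X,d_\infty)\to(X,d_j)$ is $1$-Lipschitz; hence each $d_j$, and also $d_\infty$, is continuous (indeed Lipschitz) as a real-valued function on the compact product space $(X\times X,d_\infty)$. The functions $f_j:=d_\infty-d_j$ are therefore continuous and nonnegative, and by monotonicity of the $d_j$ they decrease pointwise to $0$. Dini's theorem then yields uniform convergence, so that
\[
\lambda_j:=\sup_{x,y\in X}\bigl(d_\infty(x,y)-d_j(x,y)\bigr)\longrightarrow 0 .
\]
This is exactly where compactness of $(X,d_\infty)$ enters; without it there need be no uniform convergence, and in fact no GH limit at all, cf.\ Example~\ref{ex:not-unif} and Remark~\ref{rmrk:not-GH}.

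\medskip
\noindent\emph{Step 2: common metric space and GH convergence.} Assuming $\lambda_j>0$ (otherwise $d_j=d_\infty$ and there is nothing to prove), I would glue two copies of $X$ into a metric space $Z_j:=X\sqcup X$ whose distance restricts to $d_j$ on the first copy, to $d_\infty$ on the second, and is given between copies by
\[
d_{Z_j}\bigl((x,1),(y,2)\bigr):=\inf_{z\in X}\bigl(d_j(x,z)+d_\infty(z,y)\bigr)+\tfrac12\lambda_j .
\]
The inclusions of $(X,d_j)$ and $(X,d_\infty)$ into $Z_j$ are distance preserving by construction, and choosing $z=x$ shows $d_{Z_j}((x,1),(x,2))=\tfrac12\lambda_j$, so the Hausdorff distance in $Z_j$ between the two copies is at most $\tfrac12\lambda_j$; hence $d_{GH}\bigl((X,d_j),(X,d_\infty)\bigr)\le\tfrac12\lambda_j\to0$. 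For the paper I would instead package \emph{all} the levels at once, building a single common metric space containing $(X,d_j)$ for every $j$ together with $(X,d_\infty)$ (using the analogous gluing between levels $j<k$ with a fudge factor like $\tfrac12(\lambda_j-\lambda_k)$), together with a natural projection onto $(X,d_\infty)$; this is the content of Proposition~\ref{prop:common-Z} and Lemma~\ref{lem:W-to-X}, and it is this richer structure, rather than a bare GH estimate, that I want available for the SWIF argument later.

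\medskip
\noindent\emph{Main obstacle.} The one genuinely nontrivial point is checking the triangle inequality for the glued metric $d_{Z_j}$ (and its multi-level analogue). The cases in which all three points lie in a single copy are immediate, and the mixed cases reduce to elementary estimates \emph{except} for the configuration of two points in the $d_\infty$-copy and one in the $d_j$-copy, which forces the inequality
\[
d_j(b,w)+d_j(b,w')+\lambda_j\ \ge\ d_\infty(w,w')\qquad\text{for all }b,w,w'\in X .
\]
This holds precisely because $d_j(b,w)+d_j(b,w')\ge d_j(w,w')$ and $d_j(w,w')\ge d_\infty(w,w')-\lambda_j$, i.e.\ because the gap $d_\infty-d_j$ is bounded \emph{uniformly} by $\lambda_j$. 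So Step~1 is not a convenience but a prerequisite: it is the uniform, as opposed to merely pointwise, bound that makes the gluing a bona fide metric with the stated Hausdorff estimate.
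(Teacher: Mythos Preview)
Your proof is correct and follows the same two-step strategy as the paper (first upgrade pointwise to uniform convergence using compactness of $(X,d_\infty)$, then build an explicit common metric space), but the implementation differs in both steps in ways worth noting.

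For Step~1 the paper (Proposition~\ref{prop:unif-cmpct}) invokes Arzel\`a--Ascoli on the equi-Lipschitz family $\{d_j\}$ to extract a uniformly convergent subsequence, and then uses monotonicity to pass to the full sequence; your Dini argument is cleaner and more direct, since monotonicity and continuity of the limit are already in hand.  For Step~2 the paper (Proposition~\ref{prop:Zab}) does not use your disjoint-union bridge $X\sqcup X$ but rather a \emph{thickened} space $Z_{a,b}=X\times[0,h]$, $h=\epsilon/2$, with a hybrid taxi/shortcut metric, so that $(X,d_\infty)$ sits at height~$0$ and $(X,d_j)$ at height~$h$.  Both constructions give the same Hausdorff bound $h=\tfrac12\lambda_j$ and both hinge on exactly the inequality you isolate as the ``main obstacle''.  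The payoff of the paper's thicker $Z_{a,b}$ only appears later: when the pages $Z_{j,\infty}$ are glued along the spine $X\times\{0\}$ to form the common $Z$ of Proposition~\ref{prop:common-Z}, each page carries a genuine product current $T\times[0,h_j]$, which is what makes the explicit SWIF estimate in Proposition~\ref{prop:TaTb} go through.  Your sketch of the multi-level gluing with fudge factors $\tfrac12(\lambda_j-\lambda_k)$ would give the GH statement but would need to be replaced by the product construction to support the intrinsic flat argument.
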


Although this GH Convergence theorem can be proven quite easily, we will prove it through a sequence of lemmas 
and propositions that we will apply again to prove
SWIF convergence later.

\subsection{Distance on the Product Space}

We introduce the following standard definition:

\begin{defn}\label{defn:dsum}
Given a pair of metric spaces,
$(X_a,d_a)$ and $(X_b,d_b)$, we
define the taxi product metric space
$(X_a\times X_b, d_{sum,a,b})$
where
\be\label{eq:dsum}
d_{sum,a,b}((x_1,y_1),(x_2,y_2))=d_a(x_1,x_2)+d_b(y_1,y_2)
\ee
for any $x_1,x_2\in X_a$ and any
$y_1,y_2\in X_b$.
\end{defn}

\begin{lem}\label{lem:cpct-dbb}
If $(X,d_a)$ and $(X,d_b)$ are compact then
$(X\times X, d_{sum,a,b})$ is also compact.
\end{lem}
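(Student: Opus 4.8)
The plan is to prove compactness of $(X\times X, d_{sum,a,b})$ by showing it is sequentially compact, since it is a metric space. The key observation is that the taxi product metric topology on $X\times X$ is the same as the product topology when each factor carries its own metric topology; more precisely, convergence in $d_{sum,a,b}$ is equivalent to simultaneous convergence in $d_a$ on the first coordinate and in $d_b$ on the second coordinate. Concretely, $d_{sum,a,b}((x_n,y_n),(x,y))\to 0$ if and only if $d_a(x_n,x)\to 0$ and $d_b(y_n,y)\to 0$, which is immediate from the definition \eqref{eq:dsum} and the fact that both summands are nonnegative.

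First I would take an arbitrary sequence $(x_n,y_n)_{n\in\mathbb N}$ in $X\times X$. Since $(X,d_a)$ is compact, hence sequentially compact, the sequence $(x_n)$ has a subsequence $(x_{n_k})$ converging in $d_a$ to some $x\in X$. Then I would pass to the sequence $(y_{n_k})$ in $X$; since $(X,d_b)$ is compact, this has a further subsequence $(y_{n_{k_\ell}})$ converging in $d_b$ to some $y\in X$. Along this doubly-extracted subsequence we have $d_a(x_{n_{k_\ell}},x)\to 0$ (a subsequence of a convergent sequence) and $d_b(y_{n_{k_\ell}},y)\to 0$, so by the characterization above $d_{sum,a,b}((x_{n_{k_\ell}},y_{n_{k_\ell}}),(x,y))\to 0$. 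Thus every sequence in $X\times X$ has a convergent subsequence, so $(X\times X,d_{sum,a,b})$ is sequentially compact and therefore compact.

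There is essentially no main obstacle here — the statement is a routine fact about finite products of compact metric spaces. The only minor point worth stating carefully is the diagonal-extraction / nested-subsequence step, which is standard, and the elementary equivalence between $d_{sum,a,b}$-convergence and coordinatewise convergence. One could alternatively argue via total boundedness and completeness: a finite $\varepsilon/2$-net for $(X,d_a)$ and a finite $\varepsilon/2$-net for $(X,d_b)$ together give a finite $\varepsilon$-net for the product under $d_{sum,a,b}$, establishing total boundedness, while completeness of the product follows because a $d_{sum,a,b}$-Cauchy sequence is Cauchy in each coordinate and each factor is complete (being compact). Either route is short; I would present the sequential compactness argument as the cleanest.
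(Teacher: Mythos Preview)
Your proof is correct. The paper does not actually give a proof of this lemma; it simply states ``The proof is an exercise for the reader,'' so your sequential-compactness argument (or the alternative total-boundedness-plus-completeness route you mention) fills in exactly what was intended.
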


The proof is an exercise for the reader.

\begin{lem}\label{lem:da-Lip}
Given two metric spaces
$(X,d_a)$ and $(X,d_b)$,
if 
\be \label{eq:ab-Lip}
d_a(x,y) \le d_b(x,y)\quad \forall x,y\in X
\ee
then the function
$d_a: X\times X \to [0,\infty)$ is $1$-Lipschitz with respect to the distance
$d_{sum,b,b}$ on $X\times X$ given in
Definition~\ref{defn:dsum}.
That is,
\be
|d_a(x_1,y_1)-d_a(x_2,y_2)|
\le d_{sum,b,b}((x_1,y_1),(x_2,y_2)) \qquad \forall (x_i,y_i)\in X\times X.
\ee  
\end{lem}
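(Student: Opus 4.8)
The plan is to reduce the statement to the triangle inequality for $d_a$ combined with the comparison hypothesis (\ref{eq:ab-Lip}). Fix two points $(x_1,y_1)$ and $(x_2,y_2)$ in $X\times X$. First I would apply the triangle inequality for $d_a$ twice, inserting $x_2$ and then $y_2$ as intermediate points, to obtain
\[
d_a(x_1,y_1)\le d_a(x_1,x_2)+d_a(x_2,y_2)+d_a(y_2,y_1).
\]
Using the symmetry of $d_a$ and rearranging, this gives
\[
d_a(x_1,y_1)-d_a(x_2,y_2)\le d_a(x_1,x_2)+d_a(y_1,y_2).
\]

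Next I would invoke the hypothesis (\ref{eq:ab-Lip}) to bound each term on the right by the corresponding $d_b$ term, so that
\[
d_a(x_1,y_1)-d_a(x_2,y_2)\le d_b(x_1,x_2)+d_b(y_1,y_2)=d_{sum,b,b}\big((x_1,y_1),(x_2,y_2)\big),
\]
where the last equality is just the definition of $d_{sum,b,b}$ from Definition~\ref{defn:dsum}. Then I would run the identical argument with the roles of the two points interchanged to get $d_a(x_2,y_2)-d_a(x_1,y_1)\le d_{sum,b,b}\big((x_1,y_1),(x_2,y_2)\big)$, and combining the two inequalities yields the asserted bound on $|d_a(x_1,y_1)-d_a(x_2,y_2)|$.

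There is no serious obstacle here; the proof is a routine two-line manipulation. The only points requiring a little care are bookkeeping and the direction of the monotonicity: one must match first coordinates with first coordinates and second with second, so that the cross terms produced by the triangle inequality are exactly $d_a(x_1,x_2)$ and $d_a(y_1,y_2)$, which is precisely how $d_{sum,b,b}$ pairs them up; and the comparison (\ref{eq:ab-Lip}) must go in the direction $d_a\le d_b$ so that replacing $d_a$ by $d_b$ on the right-hand side is an enlargement and hence preserves the inequality.
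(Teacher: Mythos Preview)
Your proof is correct and follows essentially the same approach as the paper: apply the triangle inequality for $d_a$ to get $|d_a(x_1,y_1)-d_a(x_2,y_2)|\le d_a(x_1,x_2)+d_a(y_1,y_2)$, then use the hypothesis $d_a\le d_b$ and the definition of $d_{sum,b,b}$. The paper's version is more terse, invoking the reverse triangle inequality directly, but the content is identical.
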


\begin{proof}
By the triangle inequality
and then by (\ref{eq:ab-Lip})
we have
\begin{eqnarray}
|d_a(x_1,y_1)-d_a(x_2,y_2)| &\le& d_a(x_1,x_2)+d_a(y_1,y_2)\\
&\le&
d_b(x_1,x_2)+d_b(y_1,y_2)
\end{eqnarray}
which gives our claim by
(\ref{eq:dsum})
\end{proof}

\subsection{Uniform Convergence}

\begin{prop}\label{prop:unif-cmpct}
Under the hypotheses of Theorem~\ref{thm:GH-cmpct},
the distance functions, $d_j$, converge uniformly
to $d_\infty$ on $X\times X$. 
\end{prop}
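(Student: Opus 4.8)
The plan is to use the fact that the monotone increasing family $\{d_j\}$ of functions on the compact space $(X\times X, d_{sum,\infty,\infty})$ converges pointwise to the continuous function $d_\infty$, and then invoke Dini's theorem. First I would verify that each $d_j$ is continuous with respect to $d_{sum,\infty,\infty}$: since $d_j(x,y)\le d_\infty(x,y)$ for all $x,y$ by Lemma~\ref{lem:diam} (indeed $d_\infty=\sup_j d_j$), Lemma~\ref{lem:da-Lip} (applied with $d_a=d_j$ and $d_b=d_\infty$) shows that $d_j:X\times X\to[0,\infty)$ is $1$-Lipschitz, hence continuous, with respect to $d_{sum,\infty,\infty}$. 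Likewise $d_\infty$ itself is $1$-Lipschitz with respect to $d_{sum,\infty,\infty}$, so it is continuous.

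Next I would set up the compactness input. By hypothesis $(X,d_\infty)$ is compact, so by Lemma~\ref{lem:cpct-dbb} (with $d_a=d_b=d_\infty$) the taxi product $(X\times X, d_{sum,\infty,\infty})$ is compact. We now have on this compact metric space a sequence of continuous real-valued functions $d_j$ increasing monotonically (pointwise) to the continuous function $d_\infty$. By Dini's theorem, the convergence $d_j\to d_\infty$ is uniform on $X\times X$; that is, for every $\varepsilon>0$ there is $N$ with $0\le d_\infty(x,y)-d_j(x,y)<\varepsilon$ for all $j\ge N$ and all $(x,y)\in X\times X$. This is exactly the assertion of the proposition.

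If one prefers to avoid quoting Dini's theorem by name, the same conclusion follows by a direct compactness argument: fix $\varepsilon>0$, let $U_j=\{(x,y):d_\infty(x,y)-d_j(x,y)<\varepsilon\}$, which is open by continuity of $d_j$ and $d_\infty$; monotonicity gives $U_j\subset U_{j+1}$, and pointwise convergence gives $\bigcup_j U_j = X\times X$; compactness yields a finite subcover, hence $U_N=X\times X$ for some $N$, and then $U_j=X\times X$ for all $j\ge N$. Either way, the main (and only) obstacle is making sure the ambient space on which we apply compactness is genuinely compact and that the $d_j$ are genuinely continuous there — both of which are handled by the preceding lemmas — so the argument is short once those pieces are in place.
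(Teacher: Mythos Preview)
Your argument is correct and uses exactly the same preparatory ingredients as the paper (Lemma~\ref{lem:diam} for $d_j\le d_\infty$, Lemma~\ref{lem:da-Lip} for $1$-Lipschitz continuity of each $d_j$ on $(X\times X,d_{sum,\infty,\infty})$, and Lemma~\ref{lem:cpct-dbb} for compactness of the product), but the final step differs. The paper invokes the Arzel\`a--Ascoli theorem to extract a uniformly convergent subsequence, identifies its limit with the pointwise limit $d_\infty$, and then uses monotonicity to squeeze the full sequence between $d_{j_k}$ and $d_\infty$. You instead observe that $d_\infty$ is itself continuous (again by Lemma~\ref{lem:da-Lip}) and apply Dini's theorem directly, which is the more natural tool here since it is tailored precisely to monotone pointwise convergence of continuous functions to a continuous limit on a compact space. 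Your route is shorter and avoids the subsequence-then-upgrade maneuver; the paper's route has the minor advantage that continuity of $d_\infty$ need not be checked separately (it emerges as a uniform limit), though as you note that continuity is immediate from the same lemma anyway.
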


\begin{proof}
By Lemma~\ref{lem:diam}, we have $d_j\le d_\infty$ as
functions on $X\times X$. By Lemma~\ref{lem:da-Lip}, the maps
$d_j: X\times X \to [0,D_0]$ are bounded $1$-Lipschitz functions on the  metric space $(X\times X, d_{sum,\infty,\infty})$,
which is compact by hypothesis and Lemma~\ref{lem:cpct-dbb}.

By the Arzela-Ascoli Theorem, there is a subsequence, $d_{j_k}:X\times X\to [0,D_0]$, which
converges uniformly to a limit function.   However, we already have a pointwise limit, so that limit function equals $d_\infty: X\times X \to [0,D_0]$.  
That is,
$
\forall \epsilon>0 \,\,\exists N_\epsilon \in \mathbb N
\,\,s.t.\,\, \forall k\ge N_\epsilon$
\be
d_\infty(x,y)-\epsilon < d_{j_k}(x,y) < d_\infty(x,y)+\epsilon
\qquad \forall x,y\in X\times X.
\ee
By the assumption of monotonicity, for all $j\ge j_k\ge j_{N_\epsilon}$,
\be
d_\infty(x,y)-\epsilon < d_{j_k}(x,y) \le d_j(x,y)\le d_\infty(x,y)
\qquad \forall x,y\in X\times X,
\ee
showing that the whole sequence $d_j$ uniformly converges to $d_\infty$.
\end{proof}

\subsection{Constructing a Common Metric Space}

Recall that the definition of the Gromov-Hausdorff distance involves a common metric space $Z$ and distance preserving maps [Definition~\ref{defn:GH}].
The following lemma is not part of the original proof by Gromov that uniform convergence implies Gromov-Hausdorff convergence.   Here we create a different common metric space $Z$ that allows us to prove
intrinsic flat convergence later as well.   It is a simplification of the $Z$ constructed by Allen-Perales-Sormani in \cite{APS-VADB-JDG}.

\begin{prop}\label{prop:Zab}
Given two metric spaces
$(X,d_a)$ and $(X,d_b)$
and an $\epsilon>0$ such that
\be \label{eq:ab-Zab}
d_b(x,y)-\epsilon \le d_a(x,y) \le d_b(x,y)\quad \forall x,y\in X
\ee
then there exists a metric
space 
\be
(Z_{a,b}=X\times [0,h], d_{Z_{a,b}})
\textrm{
where } h=\epsilon/2,
\ee
with the distance 
between $z_1=(x_1,t_1)$
and $z_2=(x_2,t_2)$ defined by
\be \label{eq:d_Z}
d_{Z_{a,b}}(z_1,z_2)=
\min\{d_{taxi_{b}}(z_1,z_2), (h-t_1)+(h-t_2)+d_a(x_1,x_2)\}.
\ee
\be
\textrm{ where }
\quad
d_{taxi_{b}}(z_1,z_2)=
d_b(x_1,x_2)+|t_1-t_2|
\ee
so that the identity map, 
\be
id_{a,b}: (Z_{a,b},d_{taxi_{b}})
\to (Z_{a,b},d_{Z_{a,b}})
\ee
is $1$-Lipschitz and
there are distance preserving maps:
\begin{eqnarray}
&f_a=f_{a,(a,b)}:(X, d_a) \to Z_{a,b}&
\textrm{ s.t. } f_a(x)=(x,h),\\
&f_b=f_{b,(a,b)}:(X, d_b) \to Z_{a,b}&
\textrm{ s.t. } f_b(x)=(x,0),
\end{eqnarray}
so that
\be\label{eq-GHestimate}
d_{GH}((X,d_a), (X,d_b))
\le d_H^{Z_{a,b}}(f_a(X),f_b(X))
\le h.
\ee
\end{prop}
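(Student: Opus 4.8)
The plan is to verify directly that the formula \eqref{eq:d_Z} defines a metric on $Z_{a,b} = X \times [0,h]$, check the $1$-Lipschitz property of $id_{a,b}$, check that $f_a$ and $f_b$ are distance preserving, and finally estimate the Hausdorff distance between their images. The key geometric idea is that $Z_{a,b}$ is the ``double taxi cylinder'' obtained from the taxi product $(X\times[0,h], d_{taxi_b})$ by allowing a second, cheaper route: one may descend from level $t_1$ to level $0$, traverse horizontally using the \emph{smaller} distance $d_a$, and climb back up to level $t_2$, paying $(h-t_1) + d_a(x_1,x_2) + (h-t_2)$. Taking the minimum of the two routes is exactly \eqref{eq:d_Z}.

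First I would check $d_{Z_{a,b}}$ is a metric. Symmetry and nonnegativity are immediate. Definiteness: if $z_1 \neq z_2$ then either $x_1 \neq x_2$ or $t_1 \neq t_2$; in the first case both $d_{taxi_b}(z_1,z_2) \geq d_b(x_1,x_2) > 0$ and the second term $\geq d_a(x_1,x_2) > 0$ (using that $d_a, d_b$ are metrics and, when needed, that the two terms $(h-t_i)$ are nonnegative); in the second case $d_{taxi_b}(z_1,z_2) \geq |t_1 - t_2| > 0$ and the second term $\geq (h-t_1)+(h-t_2) \geq |t_1-t_2|>0$ provided $t_1,t_2 \le h$, so the minimum is positive. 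The triangle inequality is the main obstacle: given $z_1, z_2, z_3$ one must show $d_{Z_{a,b}}(z_1,z_3) \leq d_{Z_{a,b}}(z_1,z_2) + d_{Z_{a,b}}(z_2,z_3)$. I would argue by cases on which of the two options realizes the minimum for the two summands on the right. If both are realized by the taxi term, use the triangle inequality for $d_{taxi_b}$ directly. If both are realized by the ``dip-down'' term, add: $(h-t_1)+(h-t_2)+d_a(x_1,x_2) + (h-t_2)+(h-t_3)+d_a(x_2,x_3) \geq (h-t_1)+(h-t_3)+d_a(x_1,x_3)$, using $h-t_2\ge 0$ and the triangle inequality for $d_a$. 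In the mixed case, say $z_1$ to $z_2$ via taxi and $z_2$ to $z_3$ via dip-down, bound $d_{taxi_b}(z_1,z_2) = d_b(x_1,x_2)+|t_1-t_2| \geq d_a(x_1,x_2) + (t_1 - t_2)$ using \eqref{eq:ab-Zab}, and then $d_a(x_1,x_2) + (t_1-t_2) + (h-t_2)+(h-t_3)+d_a(x_2,x_3) \geq (h-t_1)+(h-t_3)+d_a(x_1,x_3)$ after combining $(t_1 - t_2) + (h - t_2) \geq h - t_1$ (equivalently $t_1 \le 2t_2$ is \emph{not} needed; rather $(t_1-t_2)+(h-t_2) = (h-t_1) + 2(t_1 - t_2)$ and one instead bounds $|t_1-t_2|\ge t_2 - t_1$ to get $(t_2 - t_1) + (h-t_2) = h - t_1$, so one should pick the sign of $|t_1-t_2|$ that helps); in any event the four elementary inequalities close the case. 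So the dip-down term dominates $d_{Z_{a,b}}(z_1,z_3)$, finishing the triangle inequality.

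Next, $id_{a,b}$ is $1$-Lipschitz since $d_{Z_{a,b}} \leq d_{taxi_b}$ by definition of the minimum. For the distance-preserving maps: $d_{Z_{a,b}}(f_b(x), f_b(y)) = \min\{d_b(x,y)+0,\ h+h+d_a(x,y)\} = \min\{d_b(x,y),\ 2h + d_a(x,y)\} = d_b(x,y)$ since $2h = \epsilon \geq d_b(x,y) - d_a(x,y)$ by \eqref{eq:ab-Zab}, i.e. $d_b(x,y) \leq 2h + d_a(x,y)$. Similarly $d_{Z_{a,b}}(f_a(x), f_a(y)) = \min\{d_b(x,y),\ d_a(x,y)\} = d_a(x,y)$ by \eqref{eq:ab-Zab}. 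Finally, for the Hausdorff estimate, every point $f_a(x)=(x,h)$ and $f_b(x)=(x,0)$ satisfies $d_{Z_{a,b}}(f_a(x),f_b(x)) \leq d_{taxi_b}((x,h),(x,0)) = |h - 0| = h$, so $f_a(X) \subset T_h(f_b(X))$ and $f_b(X)\subset T_h(f_a(X))$, hence $d_H^{Z_{a,b}}(f_a(X),f_b(X)) \leq h$, and \eqref{eq-GHestimate} follows from Definition~\ref{defn:GH}. The only delicate point is the bookkeeping in the triangle inequality's mixed case, which I expect to be the real content of the proof; everything else is routine.
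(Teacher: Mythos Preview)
Your proposal is correct and follows essentially the same approach as the paper: verify the metric axioms (with the triangle inequality split into the same three cases---both taxi, mixed, both dip-down), then check the $1$-Lipschitz identity, the two isometric embeddings, and the Hausdorff bound. Your mixed-case bookkeeping is a bit meandering before you land on the right inequality $|t_1-t_2|\ge t_2-t_1$ (so that $|t_1-t_2|+(h-t_2)\ge h-t_1$), which is exactly the step the paper uses; once you commit to that, the argument is identical.
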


The intuition behind this construction is
that we are taking the taxi product of $(X,d_b)$ with the interval $([0,h],d_{\mathbb R})$
and then gluing $(X,d_a)$ to this taxi product at
$t=h$ which gives possibly shorter distances.   See Figure~\ref{fig:Zab}.

\begin{figure}[h] 
   \centering
   \includegraphics[width=.8 \textwidth]{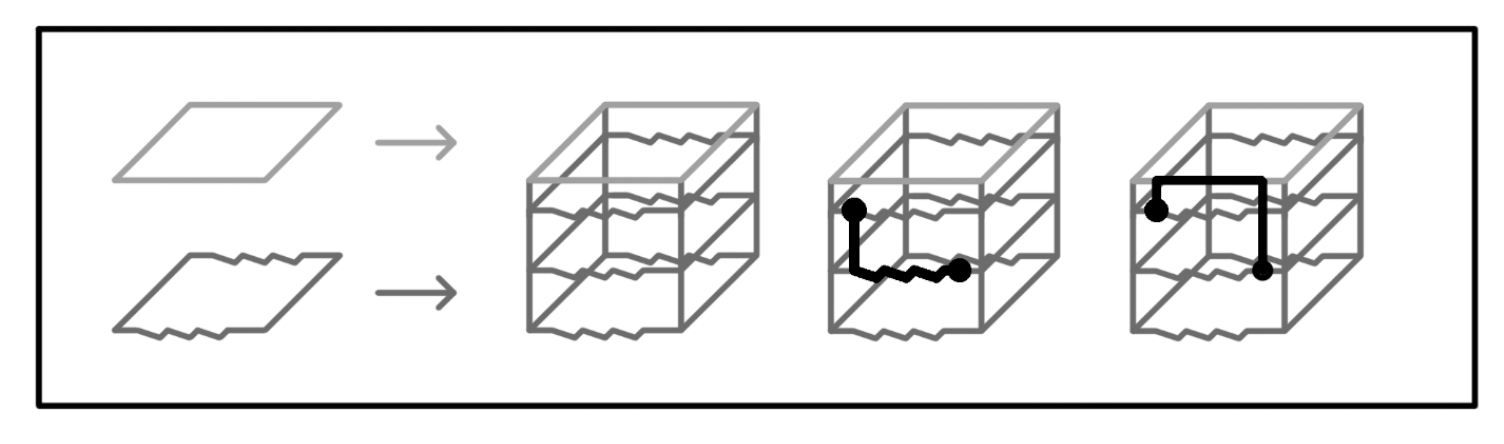} 
\caption{On the left we see
$(X_a,d_a)$ above $(X_b,d_b)$ mapping into
$(Z_{a,b},d_{Z_{a,b}})$ and on the right we see two paths between points in $Z_{a,b}$ where the first achieves
$d_{taxi_b}$ and the second
takes a short cut through the image of $X_a$.
The shorter of the two paths achieves the minimum in the definition of $ d_{Z_{a,b}}$.  }
\label{fig:Zab}
\end{figure}

\begin{proof}
First we confirm that $d_{Z_{a,b}}$ is a metric.  It is clearly definite and symmetric. To see that it satisfies the triangle inequality we have several cases.
Let $z_i=(x_i,t_i) \in Z_{a,b}$, $i=1,2,3$ and 
\be
D_{13}=d_{Z_{a,b}}(z_1,z_2) + d_{Z_{a,b}}(z_2, z_3).
\ee

\noindent
{\em{Case I:}}
Assume that 
$
d_{Z_{a,b}}(z_1,z_2)=
d_{taxi_{b}}(z_1,z_2)
$ and $
d_{Z_{a,b}}(z_2, z_3)= d_{taxi_{b}}(z_2,z_3)
$
Then since $d_{taxi_{b}}$
is a distance function, and thus satisfies the triangle inequality, and by definition of $d_{Z_{a,b}}$, we get 
\be
D_{13} =
d_{taxi_{b}}(z_1,z_2) +d_{taxi_{b}}(z_2,z_3)
\geq 
d_{taxi_{b}}(z_1,z_3) \geq 
d_{Z_{a,b}}(z_1,z_3).
\ee

\noindent
{\em{Case II:}}
Assume that we have
$
d_{Z_{a,b}}(z_1,z_2)=
d_{taxi_{b}}(z_1,z_2)
$
and 
\be
d_{Z_{a,b}}(z_2,z_3)= (h-t_2)+(h-t_3)+d_a(x_2,x_3).
\ee
Then by $d_b \geq d_a$, triangle inequality in $\mathbb R$ and for  $d_a$, we get
\begin{align*}
D_{13}& =
d_{taxi_{b}}(z_1,z_2) +(h-t_2)+(h-t_3)+d_a(x_2,x_3) \\
& \geq 
d_a(x_1,x_2)+ |t_1-t_2|
+(h-t_2)+(h-t_3)+d_a(x_2,x_3)
\\
& \geq 
d_a(x_1,x_2)
+(h-t_1)+(h-t_3)+d_a(x_2,x_3)\\
&\geq 
d_a(x_1,x_3) +(h-t_1)+(h-t_3) \geq 
d_{Z_{a,b}}(z_1,z_3).
\end{align*}

\noindent
{\em{Case III:}}
Assume that 
\be
d_{Z_{a,b}}(z_1,z_2) =   (h-t_1)+(h-t_2)+d_a(x_1,x_2)
\ee
\be
d_{Z_{a,b}}(z_2,z_3)= (h-t_2)+(h-t_3)+d_a(x_2,x_3).
\ee
Then, since $h-t_2 \geq 0$ and triangle inequality for $d_a$, we get 
\begin{align}
D_{13}& =
 (h-t_1)+(h-t_2)+d_a(x_1,x_2) 
 \\
& \qquad + (h-t_2)+(h-t_3)+d_a(x_2,x_3) \\
& \geq 
(h-t_1)+d_a(x_1,x_2) 
+(h-t_3)+d_a(x_2,x_3)\\
& \geq 
d_a(x_1,x_3) +(h-t_1)+(h-t_3)\\
& \geq 
d_{Z_{a,b}}(z_1,z_3),
\end{align}
which completes the proof of the triangle inequality on $(Z_{a,b},d_{Z_{a,b}})$.

We have the claimed $1$-Lipschitz identity map, 
because $d_{Z_{a,b}}\le d_{taxi_{b}}$. 
We have the claims that $f_a$ and $f_b$ are distance preserving maps as follows.
By the definition of $d_{Z_{a,b}}$, $f_a$ and $d_b \geq d_a$,
$$
d_{Z_{a,b}}(f_a(x), f_a(y))=  d_{Z_{a,b}}
((x,h), (y,h))= \min\{d_b(x,y),d_a(x,y)\}=
d_a(x,y).
$$
By definition of $d_{Z_{a,b}}$, $f_b$,
\eqref{eq:ab-Zab} and $h=\epsilon/2$ we have
\be
d_{Z_{a,b}}(f_b(x), f_b(y))=
\min\{
d_b(x,y), 2h + d_a(x,y)\}
= d_b(x,y).
\ee
Furthermore, by the definition of $d_{Z_{a,b}}$, for any $x \in X$, it holds
\be 
d_{Z_{a,b}}((x,0),(x,h))=h, 
\ee
which implies that for any $R>h$,
\be
f_b(X)\subset T_R(f_a(X)) \textrm{ and }
f_a(X)\subset T_R(f_b(X)), 
\ee
so the Hausdorff distance
$d_H^Z(f_a(X), f_b(X)) \leq h$ which gives our final claim.
\end{proof}

\subsection{Proof of Theorem~\ref{thm:GH-cmpct}}

GH Convergence is now easy to prove:

\begin{proof}
First we apply Proposition~\ref{prop:unif-cmpct}
to get uniform convergence of $d_j$ to $d_\infty$
from below.
Then we apply Proposition~\ref{prop:Zab}
to $(X,d_j)$ and $(X, d_\infty)$ to see that
\be
d_{GH}((X,d_j),(X, d_\infty))<\epsilon_j \to 0.
\ee
\end{proof}

\begin{rmrk}\label{rmrk:not-GH}
Note that in the proof of Theorem~\ref{thm:GH-cmpct} it was essential that we assumed $(X,d_\infty)$ is compact, as seen
in Example~\ref{ex:not-unif}.   This
example has the monotonicity but no
Gromov-Hausdorff limit.  In fact, Gromov proved that if a sequence of
compact metric spaces has a GH limit then the entire sequence can be embedded via distance preserving maps
into a common metric space, $(Z,d_Z)$
\cite{Gromov-poly}.  However, the
Riemannian manifolds in Example~\ref{ex:not-unif} have an unbounded number of disjoint balls of radius $1$, so this is impossible.
\end{rmrk}

\subsection{A Common Compact Metric Space for the Whole Sequence}

Gromov proved in \cite{Gromov-poly} that an entire GH-converging sequence of metric spaces can be embedded with distance preserving maps into a common compact 
metric space $Z$ along with the limit so that the sequence of images converges in the Hausdorff sense inside $Z$ to the image of the limit. Here we construct a specific $Z$ which we will use later to prove SWIF convergence and Theorem~\ref{thm:Riem}. See Figure~\ref{fig:common-Z}.

\begin{prop}\label{prop:common-Z}
If $(X,d_j)$ is uniformly converging to
compact $(X, d_\infty)$ 
\be
d_\infty(x,y)-\epsilon_j<d_j(x,y)\le d_\infty(x,y) \quad \forall x,y\in X
\ee
with $\epsilon_j$ decreasing to $0$
then taking $Z_j$ of
Proposition~\ref{prop:Zab},
we define
\be
Z=\bigsqcup_{j=1}^\infty Z_j\,\, |_\sim
\textrm{ where }
Z_j=Z_{j,\infty}=X_j\times [0,h_j]
\ee
where $(x,0)\in Z_j$ is identified
with $(x,0)\in Z_k$ for all $x\in X$
and all $j,k \in {\mathbb N}$.  
Then $d_Z: Z \times Z \to [0, \infty)$ given by 
\be
d_Z(z_j,z_k)=  \inf\{d_{Z_j}(z_j,(x,0))+d_{Z_k}((x,0),z_k)\,|\, x\in X\} 
\ee
for 
\be
z_j=(x_j,t_j)\in Z_j
\textrm{ and }z_k=(x_k,t_k)\in Z_k \qquad j \neq k,
\ee
and, otherwise, 
\be
d_Z(z_j,z_k)=  d_{Z_j}(z_j,z_k),
\ee
so the inclusion
maps, 
\be
\zeta_j: (Z_j, d_{Z_j}) \to (Z, d_Z),
\ee 
are distance preserving.
In addition, taking
\be
f_j=f_{j,(j,\infty)}:(X, d_j) \to Z_j
\textrm{ and } f_{\infty,(j,\infty)}:(X,d_\infty)\to Z_j
\ee
from Proposition~\ref{prop:Zab}, we have 
distance preserving maps:
\be
\chi_j=\zeta_j\circ f_j: (X, d_j) \to 
 (Z, d_Z)
\ee
and
\be
\chi_\infty= \zeta_j\circ f_{\infty,(j,\infty)}: (X, d_\infty) \to 
 (Z, d_Z),
\ee
where $\chi_\infty(x)=\zeta_j(x,0)$ does not depend on $j$
and is an isometry onto,
\be
Z_0=\chi_\infty(X)=\zeta_j(f_{\infty,(j,\infty)}(X))\subset Z.
\ee
Furthermore, we have
\be \label{eq:dZchihj}
d_Z(\chi_j(x),\chi_\infty(x))=d_{Z_j}((x,h_j),(x,0))=h_j
\ee
and
\be \label{eq:dZchihj2}
d_H^Z(\chi_j(X)),\chi_\infty(X))\leq h_j.
\ee
Finally $(Z, d_Z)$ is compact.
\end{prop}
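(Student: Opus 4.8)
The plan is to treat $(Z,d_Z)$ as the standard metric gluing of the spaces $\{(Z_j,d_{Z_j})\}_j$ along their common subspace $Z_0$. I would first record that each $Z_j$ contains a canonical isometric copy of $(X,d_\infty)$, namely its bottom slice: by the formula in Proposition~\ref{prop:Zab}, $d_{Z_j}((x,0),(y,0))=\min\{d_\infty(x,y),\,2h_j+d_j(x,y)\}=d_\infty(x,y)$ since $d_j\le d_\infty$, so $f_{\infty,(j,\infty)}$ carries $(X,d_\infty)$ isometrically onto the slice $t=0$, and after the identification these maps all coincide. Thus as a set $Z$ is the copy $Z_0$ of $(X,d_\infty)$ together with disjoint ``interiors'' $X\times(0,h_j]$, one for each $j$, and the displayed formula for $d_Z$ is well defined on this quotient: the distance between an interior point $z$ of $Z_j$ and a bottom point does not depend on the index $k$ used to name the bottom point, because $\inf_{y\in X}\big(d_{Z_j}(z,(y,0))+d_\infty(y,x)\big)=d_{Z_j}(z,(x,0))$ by the triangle inequality in $Z_j$ (using $d_{Z_j}=d_\infty$ on the bottom slice).

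Next I would check that $d_Z$ is a metric. Symmetry is immediate, and on a single $\zeta_j(Z_j)$ definiteness is inherited from $d_{Z_j}$. If $z\in\zeta_j(Z_j)$, $z'\in\zeta_k(Z_k)$ with $j\ne k$ and $d_Z(z,z')=0$, choose $x_n\in X$ with $d_{Z_j}(z,(x_n,0))+d_{Z_k}((x_n,0),z')\to0$; since for each fixed index the ``through the interval'' branch of the minimum defining $d_{Z_j}$ is bounded below by $h_j>0$, for large $n$ the other branch realises the minimum, which forces the interval coordinates of $z$ and $z'$ to vanish and $x_n$ to converge in $d_\infty$ to the common $X$-coordinate; by uniqueness of limits, $z=z'$. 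For the triangle inequality I would run the short case analysis on how three given points distribute among the pieces; in each case one collapses to the triangle inequalities of the individual $d_{Z_j}$ together with the fact that all the $d_{Z_j}$ restrict to the \emph{same} distance $d_\infty$ on $Z_0$ — for instance $d_Z(z,z')+d_Z(z',z'')\ge\inf_{x,y}\big(d_{Z_i}(z,(x,0))+d_{Z_j}((x,0),(y,0))+d_{Z_k}((y,0),z'')\big)$, and one rewrites the middle term as $d_{Z_i}((x,0),(y,0))$ and applies the triangle inequality in $Z_i$.

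The distance preserving maps are then routine. $\zeta_j$ is distance preserving because $d_Z$ restricted to $\zeta_j(Z_j)$ is by definition $d_{Z_j}$; hence $\chi_j=\zeta_j\circ f_{j,(j,\infty)}$ and $\chi_\infty=\zeta_j\circ f_{\infty,(j,\infty)}$ are composites of distance preserving maps by Proposition~\ref{prop:Zab}, $\chi_\infty$ is independent of $j$ since the points $(x,0)\in Z_j$ are all identified in $Z$, and being distance preserving it is an isometry onto its image $Z_0=\chi_\infty(X)$. Finally, $d_Z(\chi_j(x),\chi_\infty(x))=d_{Z_j}((x,h_j),(x,0))=h_j$ (directly from the formula, or by Proposition~\ref{prop:Zab}), so $\chi_j(X)\subset T_R(\chi_\infty(X))$ and $\chi_\infty(X)\subset T_R(\chi_j(X))$ for every $R>h_j$, giving $d_H^Z(\chi_j(X),\chi_\infty(X))\le h_j$.

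The main work is the compactness of $(Z,d_Z)$, and I expect this to be the principal obstacle. I would first show that each $(Z_j,d_{Z_j})$ is compact: by Proposition~\ref{prop:Zab} the identity $(Z_j,d_{taxi_{\infty}})\to(Z_j,d_{Z_j})$ is $1$-Lipschitz, hence continuous, and $(Z_j,d_{taxi_{\infty}})$ is the taxi product of the compact space $(X,d_\infty)$ with the compact interval $[0,h_j]$, hence compact (by the argument of Lemma~\ref{lem:cpct-dbb}); a continuous image of a compact space is compact. In particular every $\zeta_j(Z_j)\subset Z$ is compact, and so is $Z_0$. Now I would prove $Z$ is sequentially compact. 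Given $z_n=\zeta_{j_n}(x_n,t_n)$: if $(j_n)$ is bounded, infinitely many $z_n$ lie in a single compact $\zeta_j(Z_j)$ and a convergent subsequence is extracted there; otherwise there is a subsequence with $j_n\to\infty$, along which $t_n\le h_{j_n}\to0$, so since $d_{Z_{j_n}}((x_n,t_n),(x_n,0))=\min\{t_n,\,2h_{j_n}-t_n\}\le t_n$ we have $d_Z(z_n,\chi_\infty(x_n))\le t_n\to0$; passing to a further subsequence along which $\chi_\infty(x_n)\to a$ in the compact set $Z_0$, the triangle inequality gives $z_n\to a$. In all cases $(z_n)$ has a convergent subsequence, so $(Z,d_Z)$ is compact. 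The crux is the compactness of each finite-stage space $Z_j$: this relies on $(X,d_\infty)$ being compact rather than $(X,d_j)$, so comparing with the taxi product over $d_\infty$ is what keeps it clean, together with the fact that the well depths $h_j$ shrink to $0$, which lets the infinitely many tails $Z_j$ be absorbed into the single compact core $Z_0$; the other slightly delicate point, also relying on each $h_j$ being bounded away from $0$, is the positive definiteness of $d_Z$ across different pieces.
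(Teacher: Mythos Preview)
Your proposal is correct and follows essentially the same approach as the paper: metric gluing of the $Z_j$ along their common bottom slice $Z_0\cong(X,d_\infty)$, a case analysis for the triangle inequality, and sequential compactness via the dichotomy ``$j_n$ bounded'' versus ``$j_n\to\infty$'' together with $h_{j_n}\to 0$ and the compactness of $(X,d_\infty)$. You are in fact somewhat more careful than the paper in two places (well-definedness of $d_Z$ on the quotient and positive definiteness across different pieces, which the paper simply declares ``clear''), and your compactness argument packages the constant-$j$ case via the preliminary observation that each $(Z_j,d_{Z_j})$ is compact as a continuous image of the taxi product, whereas the paper recomputes the taxi estimate directly; these are cosmetic differences only.
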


Intuitively we have glued the $Z_j$ together like the pages of a book, $\zeta_j:Z_j\to Z$, along the spine, $Z_0$, as in Figure~\ref{fig:common-Z}.

\begin{figure}[h] 
   \centering
   \includegraphics[width= \textwidth]{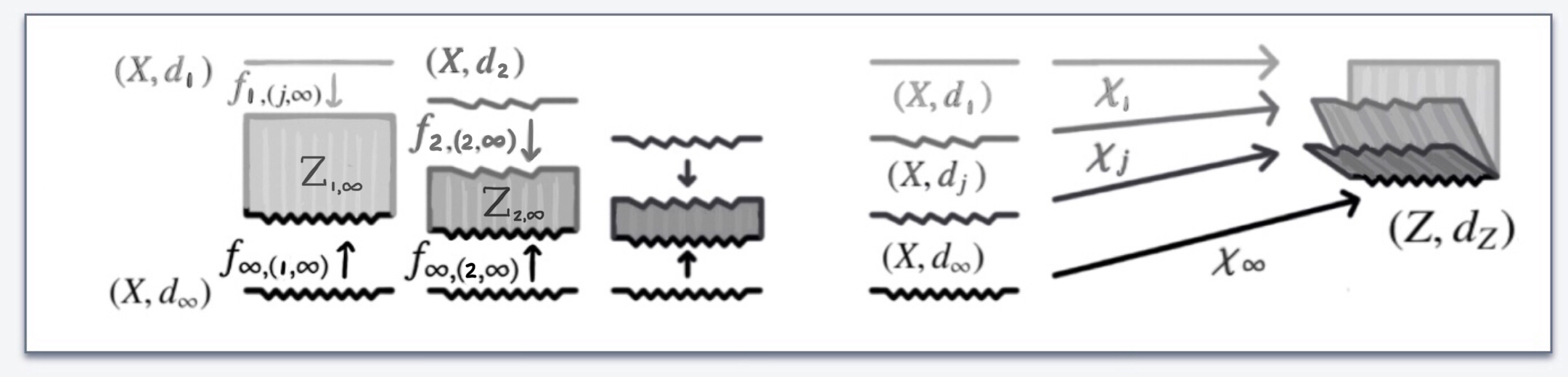} 
\caption{On the left, we see the sequence
$(X,d_j)$ above with downward maps $f_{j,(j,\infty)}:(X,d_j)\to
(Z_{j,\infty}, d_{Z_{j,\infty}})$ 
and upward maps $f_{\infty,(j,\infty)}:(X,d_\infty)\to
(Z_{j,\infty}, d_{Z_{j,\infty}})$ 
as constructed in Proposition~\ref{prop:Zab}
and depicted in Figure~\ref{fig:Zab}.
On the right, we glue together all the
$(Z_{j,\infty}, d_{Z_{j,\infty}})$ to create
$(Z,d_Z)$ as in Proposition~\ref{prop:common-Z}
with maps $\chi_j:X_j\to Z$ and 
$\chi_\infty:X_j\to Z$.
}
\label{fig:common-Z}
\end{figure}

\begin{proof}
It is clear that $d_Z$ is symmetric and positive definite. To see that it satisfies the triangle inequality we have several cases.
Let $z_{j_i}=(x_i,t_i) \in Z_{j_i}$, $j_i \in \mathbb N$ and $i=1,2,3$,   
\be
D_{13}=d_Z(z_{j_1},z_{j_2}) + d_Z(z_{j_2}, z_{j_3}).
\ee
{\bf{Case I:}} 
Assume that $j_1=j_2$ and $j_2=j_3$. 
In this case, $d_Z=d_{Z_{j_i}}$ which satisfies the triangle inequality since 
$d_{Z_{j_i}}$ is a distance function.

\noindent{\bf{Case II:}} 
Assume that $j_1=j_2$ and $j_2 \neq j_3$. 
By  $j_1=j_2$, the triangle inequality for $d_{Z_{j_2}}$ and definition of $d_Z$ we get
\begin{align}
    D_{13}= & d_{Z_{j_2}}(z_{j_1},z_{j_2}) + 
     \inf_{x \in X}\{d_{Z_{j_2}}(z_{j_2},(x,0))+ 
     d_{Z_{j_3}}((x,0),z_{j_3})\} \\
    = & 
 \inf_{x \in X}\{d_{Z_{j_2}}(z_{j_1},z_{j_2}) +  d_{Z_{j_2}}(z_{j_2},(x,0))+ 
     d_{Z_{j_3}}((x,0),z_{j_3})\}  \\
      \geq & 
     \inf_{x \in X}\{
     d_{Z_{j_2}}(z_{j_1},(x,0))+ 
     d_{Z_{j_3}}((x,0),z_{j_3})\} 
     =  d_{Z}(z_{j_1},z_{j_3}). 
    \end{align}
    
\noindent{\bf{Case III:}} 
Assume that $j_1 \neq j_2$ and $j_2 \neq j_3$. Take $x, x' \in X$, using the triangle inequality for $d_{Z_{j_2}}$ and $d_{Z_{j_1}}$, and the definition of $d_Z$ we get
\begin{align*}
   & d_{Z_{j_1}}(z_{j_1},(x,0)) +
    d_{Z_{j_2}}((x,0),z_{j_2})+
    d_{Z_{j_2}}(z_{j_2},(x',0)) +
d_{Z_{j_3}}((x',0),z_{j_3}) 
    \\  & \geq 
d_{Z_{j_1}}(z_{j_1},(x,0)) +
    d_{Z_{j_2}}((x,0),(x',0)) +
d_{Z_{j_3}}((x',0),z_{j_3}) 
\\  & \geq 
d_{Z_{j_1}}(z_{j_1},(x,0)) +
    d_{Z_{j_1}}((x,0),(x',0)) +
d_{Z_{j_3}}((x',0),z_{j_3}) 
\\  & \geq 
d_{Z_{j_1}}(z_{j_1},(x',0)) +
d_{Z_{j_3}}((x',0),z_{j_3}) \geq 
      d_{Z}(z_{j_1},z_{j_3}).
    \end{align*}
Taking the infimum over $x, x' \in X$ in the previous expression we obtain $D_{13} \geq d_{Z}(z_{j_1},z_{j_3})$. This concludes the proof that $(Z, d_Z)$ is a metric space. 

The inclusion
maps $\zeta_j$ are distance preserving by their definition and the definition of $d_Z$. Since by Proposition~\ref{prop:Zab}
$f_j=f_{j,(j,\infty)}$ and $f_{\infty,(j,\infty)}$ are distance preserving maps, then the compositions $\chi_j=\zeta_j\circ f_j$ and 
$\chi_\infty= \zeta_j\circ f_{\infty,(j,\infty)}$ are distance preserving maps. 
Since we are identifying points of the form $(x, 0) \in Z_j$ with points $(x, 0) \in Z_k$ for all $x \in X$, then $\chi_\infty(x)=\zeta_j(x,0)$ does not depend on $j$. Furthermore, $\chi_\infty$
is an isometry onto $Z_0=\chi_\infty(X)$.

Since $\chi_j$ and 
$\chi_\infty$ are distance preserving maps and by \eqref{eq-GHestimate} in Proposition \ref{prop:Zab} we have
\be
d_Z(\chi_j(x),\chi_\infty(x)) \leq d_{Z_j}((x,h_j),(x,0)) \leq h_j
\ee
so we have (\ref{eq:dZchihj})-(\ref{eq:dZchihj2}).

Finally we check that $(Z, d_Z)$ is compact. 
Given any sequence $z_i \in Z$,
\be
\exists \, j_i\in {\mathbb N} \,\,\exists\, (x_i,t_i)\in Z_{j_i}=X\times[0,h_{j_i}] \textrm{ such that }
z_i=\zeta_{j_i}(x_i,t_i).
\ee
Since $(X,d_\infty)$ is compact, a subsequence $x_i\to x_\infty$.   Since $[0,1]$ is compact, a further
subsequence can be taken such that $t_i\to t_\infty$.
A further subsequence can be taken to guarantee that
either $j_i\to \infty$ or $j_i$ is constant $j_i=j_0$.
In the diverging case we use $t_i\le h_{j_i}\to 0$ to see that
$t_\infty=0$, thus our subsequence converges to 
\be
z_\infty=\chi_\infty(x_\infty)=\zeta_{j_i}(x_\infty,0)\in Z_0
\ee
which we see as follows:
\begin{eqnarray}
d_Z(z_i,z_\infty) &=& d_Z(\zeta_{j_i}(x_i,t_i),\zeta_{j_i}(x_\infty,0))\\
&=&d_{Z_{j_i}}((x_i,t_i),(x_\infty,0))\\
&\le& d_{\infty}(x_i,x_\infty)+|t_i-0| \to 0.
\end{eqnarray}
In the constant, $j_i=j_0$ case, our subsequence converges to 
\be
z_\infty=\zeta_{j_0}(x_\infty,t_\infty)
\ee
which we see as follows:
\begin{eqnarray}
d_Z(z_i,z_\infty) &=& d_Z(\zeta_{j_0}(x_i,t_i),\zeta_{j_0}(x_\infty,t_\infty))\\
&=&d_{Z_{j_0}}((x_i,t_i),(x_\infty,t_\infty))\\
&\le& d_{\infty}(x_i,x_\infty)+|t_i-t_\infty| \to 0.
\end{eqnarray}
Thus $Z$ is compact.
\end{proof}

\begin{lem}\label{lem:W-to-X}
Assume the hypotheses of 
Proposition~\ref{prop:common-Z}.
For all $j\in {\mathbb N}$ there exists 
\be
W_j =\bigcup_{k=j}^\infty \chi_k(Z_k)\subset Z
\ee
and there is a $1$-Lipschitz map,
$
F_j: (W_j, d_Z)\to (X,d_j)$  where 
\be
F_j(\zeta_k(x,t))=x \quad \forall\, k \ge j\, \forall x\in X, \, \forall t\in [0, h_k].
\ee
\end{lem}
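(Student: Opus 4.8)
The plan is to check that the rule $F_j(\zeta_k(x,t)) = x$ is (i) well-defined on $W_j$, and (ii) $1$-Lipschitz from $(W_j, d_Z)$ to $(X, d_j)$. Well-definedness is the first thing to settle: a point of $Z$ that lies in two different $\chi_k(Z_k)$ must lie on the spine $Z_0$, i.e. be of the form $\zeta_k(x,0)$, and since all such points are identified across different $k$, the value $x$ assigned by the formula does not depend on which $k$ we use. So $F_j$ is a genuine function on $W_j = \bigcup_{k \ge j}\chi_k(Z_k)$. (Note that $W_j$ is a closed, hence compact, subset of the compact space $Z$, since it is a finite-or-countable union whose "tails" accumulate only on the compact spine $Z_0$; but compactness is not actually needed for the statement.)

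For the Lipschitz bound, first I would do the case where both points lie in a single page $Z_k$ with $k \ge j$. Let $z_1 = \zeta_k(x_1,t_1)$ and $z_2 = \zeta_k(x_2,t_2)$. Then $d_Z(z_1,z_2) = d_{Z_k}(z_1,z_2)$, and by the definition \eqref{eq:d_Z} of the metric on $Z_k = Z_{k,\infty}$ (with the roles $a = d_k$, $b = d_\infty$) we have $d_{Z_k}(z_1,z_2) \le d_{taxi_\infty}(z_1,z_2) = d_\infty(x_1,x_2) + |t_1 - t_2| \ge d_\infty(x_1,x_2) \ge d_k(x_1,x_2)$, where the last inequality is the monotonicity $d_k \le d_\infty$ from Lemma~\ref{lem:diam}. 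But this is the wrong direction: I actually need $d_{Z_k}(z_1,z_2) \ge d_k(x_1,x_2) = d_j$-\,? No: $F_j$ lands in $(X, d_j)$, so I need $d_j(x_1,x_2) \le d_Z(z_1,z_2)$. Since $d_j \le d_k$ by monotonicity (as $k \ge j$), it suffices to show $d_k(x_1,x_2) \le d_{Z_k}(z_1,z_2)$. From \eqref{eq:d_Z}, $d_{Z_k}(z_1,z_2)$ is the minimum of $d_\infty(x_1,x_2) + |t_1-t_2|$ and $(h_k - t_1) + (h_k - t_2) + d_k(x_1,x_2)$; the first term is $\ge d_\infty(x_1,x_2) \ge d_k(x_1,x_2)$ and the second is $\ge d_k(x_1,x_2)$, so indeed $d_{Z_k}(z_1,z_2) \ge d_k(x_1,x_2) \ge d_j(x_1,x_2)$. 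This handles the intra-page case.

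The remaining case has $z_1 = \zeta_{k_1}(x_1,t_1)$ and $z_2 = \zeta_{k_2}(x_2,t_2)$ with $k_1 \ne k_2$, both $\ge j$. Here $d_Z(z_1,z_2) = \inf_{x \in X}\big(d_{Z_{k_1}}(z_1,(x,0)) + d_{Z_{k_2}}((x,0),z_2)\big)$. By the intra-page estimate just proved (applied in page $Z_{k_1}$ to the pair $z_1, \zeta_{k_1}(x,0)$, and in page $Z_{k_2}$ to $\zeta_{k_2}(x,0), z_2$), each summand dominates the corresponding $d_{k_i}$-distance, hence dominates the $d_j$-distance since $k_i \ge j$: $d_{Z_{k_1}}(z_1,(x,0)) \ge d_j(x_1,x)$ and $d_{Z_{k_2}}((x,0),z_2) \ge d_j(x,x_2)$. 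Adding and applying the triangle inequality in $(X, d_j)$ gives $d_{Z_{k_1}}(z_1,(x,0)) + d_{Z_{k_2}}((x,0),z_2) \ge d_j(x_1,x) + d_j(x,x_2) \ge d_j(x_1,x_2)$ for every $x$, so taking the infimum over $x$ yields $d_Z(z_1,z_2) \ge d_j(x_1,x_2) = d_j(F_j(z_1), F_j(z_2))$. That is exactly $1$-Lipschitz-ness, and the proof is complete.

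The only genuinely delicate point is bookkeeping the direction of all the inequalities: one must consistently use $d_j \le d_k \le d_\infty$ for $k \ge j$ together with the fact that $d_{Z_k} \le d_{taxi_\infty}$ and that both branches of the minimum in \eqref{eq:d_Z} bound $d_k$ from above. Once the intra-page estimate $d_{Z_k}(\zeta_k(x_1,t_1), \zeta_k(x_2,t_2)) \ge d_j(x_1,x_2)$ is isolated as a sublemma, the cross-page case is a one-line triangle-inequality argument, so I expect essentially no obstacle beyond that organization.
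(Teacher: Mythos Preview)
Your proof is correct and follows essentially the same route as the paper's: both arguments split into the intra-page case (where you show each branch of the minimum in \eqref{eq:d_Z} dominates $d_k(x_1,x_2)\ge d_j(x_1,x_2)$) and the cross-page case (where you bound each summand in the infimum defining $d_Z$ by a $d_j$-distance and then apply the triangle inequality). Your explicit well-definedness check for $F_j$ on the spine is a welcome addition that the paper leaves implicit.
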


\begin{proof}
Let $z_1,z_2\in W_j\subset Z$.   Then 
\be
\exists k_i \geq j,\,\,\exists x_i\in X\,\,\exists t_i\in [0,h_{k_i}] 
\textrm{ s.t. }
z_i = \chi_{k_i}(x_i,t_i).
\ee
If $k_1=k_2$, since 
$d_\infty \geq d_{k_i}\geq d_j$, by the minimum in the definition of $d_{Z_{k_i}}$ and the monotonicity of the distance functions, we get
\be
d_{Z}(z_1, z_2)\ge d_{k_i}(x_1,x_2)\ge d_j(x_1,x_2).
\ee
If $k_2 \neq k_1$, then observe that for all $x\in X$,
\be
d_{Z_{k_i}}(z_i,(x,0))=
d_{Z_{k_i}}((x_i,t_i),(x,0))
\ge d_{k_i}(x_i,x)\ge d_{j}(x_i,x)
\ee
by the  definition of $d_{Z_{k_i}}$ 
because $t_i\ge 0$
and $h_{k_i}-t_i+h_{k_i} \ge 0$,
followed by
the monotonicity of the distance functions.
Thus
\be
d_{Z_{k_1}}(z_1,(x,0))+d_{Z_{k_2}}((x,0),z_2)
\ge d_{j}(x_1,x)+d_{j}(x_2,x). 
\ee
Taking the infimum over all $x\in X$ on the left and applying the
triangle inequality for $d_j$ on the right we have,
\be
d_Z(z_1,z_2)\ge d_{j}(x_1,x_2).
\ee
\end{proof}

\section{Intrinsic Flat Convergence}
\label{sect:SWIF}

In this section we review the definition of
integral current spaces which include all oriented Riemannian manifolds of finite volume with boundary of finite volume.  We then review
the Sormani-Wenger Intrinsic Flat (SWIF) distance between these spaces.
Finally we state and prove Theorem~\ref{thm:SWIF-cmpct} concerning the convergence of monotone increasing sequences of integral current spaces which implies the SWIF convergence claimed in Theorem~\ref{thm:Riem}.

\subsection{Lipschitz Functions and Tuples on Metric Spaces}

We say that a function, $F:(X,d_X)\to (Y,d_Y)$
is $K$-Lipschitz if there exists $K \geq 0$ such that
\be\label{eq:LipK}
d_Y(F(x_1,x_2))\le K d_X(x_1,x_2) \quad \forall x_1,x_2\in X.
\ee
We define the Lipschitz constant of $F$ to be
the smallest such $K$.

\begin{lem}
\label{lem:Lip}
Suppose that $(X,d_a)$ and $(X, d_b)$ are two metric spaces and 
\be \label{eq:ab1}
d_a(x_1,x_2) \le d_b(x_1,x_2)\quad \forall x_1,x_2\in X.
\ee
If a function $\pi:X\to Y$
is $K$-Lipschitz as a map $\pi: (X,d_a)\to (Y,d_Y)$
then it is also $K$-Lipschitz as a map 
$\pi: (X,d_b)\to (Y,d_Y)$.   
\end{lem}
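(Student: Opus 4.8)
The plan is to chain the Lipschitz inequality for $\pi$ with respect to $d_a$ together with the comparison hypothesis (\ref{eq:ab1}); no auxiliary constructions are needed.

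First I would fix arbitrary $x_1,x_2\in X$. Since $\pi:(X,d_a)\to(Y,d_Y)$ is $K$-Lipschitz, the defining inequality (\ref{eq:LipK}) reads
\be
d_Y(\pi(x_1),\pi(x_2))\le K\,d_a(x_1,x_2).
\ee
Applying the hypothesis $d_a(x_1,x_2)\le d_b(x_1,x_2)$ from (\ref{eq:ab1}) to the right-hand side yields
\be
d_Y(\pi(x_1),\pi(x_2))\le K\,d_a(x_1,x_2)\le K\,d_b(x_1,x_2).
\ee
Since $x_1,x_2$ were arbitrary, this is precisely the assertion that $\pi:(X,d_b)\to(Y,d_Y)$ is $K$-Lipschitz, which is the claim.

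There is no genuine obstacle here: the content is just that enlarging the distance on the domain can only make a Lipschitz bound easier to satisfy, and in fact the Lipschitz constant with respect to $d_b$ is no larger than the one with respect to $d_a$. The only point worth flagging is the direction of the monotonicity — it is exactly this that will let one transport Lipschitz charts and De Giorgi tuples along the increasing sequence $d_j\le d_\infty$ in the subsequent sections, which is why this elementary lemma is isolated at this stage.
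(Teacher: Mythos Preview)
Your proof is correct and matches the paper's argument exactly: the paper's proof is the single line $d_Y(\pi(x_1),\pi(x_2))\le K\,d_a(x_1,x_2)\le K\,d_b(x_1,x_2)$ for all $x_1,x_2\in X$, which is precisely what you wrote.
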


\begin{proof}
\be
d_Y(\pi(x_1),\pi(x_2))
\le K d_a(x_1,x_2)\le K d_b(x_1,x_2)
\quad \forall x_1,x_2\in X.
\ee
\end{proof}

By the definition in \cite{AK}
an m-tuple, $(\pi_0,\pi_1,...,\pi_m)$,
of Lipschitz functions with respect to $d_a$ consists
of a bounded Lipschitz function,
$\pi_0: (X, d_a)\to {\mathbb R}$, and 
$m$ Lipschitz functions,
$\pi_k: (X, d_a)\to {\mathbb R}$, for $k=1,\dots, m$.

\begin{cor}\label{cor:tuple}
Suppose that $(X,d_a)$ and $(X, d_b)$ are two metric spaces and 
\be \label{eq:abcor}
d_a(x,y) \le d_b(x,y)\quad \forall x,y\in X.
\ee
Then tuples of Lipschitz functions with respect to $d_a$ are 
tuples of Lipschitz functions with respect to $d_{b}$.
\end{cor}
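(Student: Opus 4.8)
The plan is to reduce this immediately to Lemma~\ref{lem:Lip} applied componentwise. First I would unwind the definition: an $m$-tuple of Lipschitz functions with respect to $d_a$ consists of a bounded Lipschitz function $\pi_0:(X,d_a)\to{\mathbb R}$ together with Lipschitz functions $\pi_k:(X,d_a)\to{\mathbb R}$ for $k=1,\dots,m$. Fix such a tuple $(\pi_0,\pi_1,\dots,\pi_m)$ and let $K_k\ge 0$ denote the Lipschitz constant of $\pi_k$ with respect to $d_a$, for $k=0,1,\dots,m$.

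Next, for each index $k$ I would apply Lemma~\ref{lem:Lip} with $(Y,d_Y)=({\mathbb R},d_{\mathbb R})$ and $\pi=\pi_k$: since $d_a(x,y)\le d_b(x,y)$ for all $x,y\in X$ by \eqref{eq:abcor}, the map $\pi_k$, being $K_k$-Lipschitz as a map from $(X,d_a)$, is also $K_k$-Lipschitz as a map from $(X,d_b)$. The target ${\mathbb R}$ carries the same metric in both cases, so no further adjustment is needed, and each $\pi_k$ remains Lipschitz with respect to $d_b$.

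Finally I would observe that the boundedness of $\pi_0$ is a statement about the range of the function values alone and is therefore insensitive to which of the distance functions $d_a$ or $d_b$ the set $X$ carries; hence $\pi_0$ is still a bounded Lipschitz function with respect to $d_b$. Combining the previous paragraph with this observation shows that $(\pi_0,\pi_1,\dots,\pi_m)$ is a tuple of Lipschitz functions with respect to $d_b$, which is the claim. I do not expect any genuine obstacle here; the only point worth flagging explicitly is that the notion of ``tuple'' bundles a boundedness requirement together with the Lipschitz requirement, so one must separately note — as above — that boundedness is metric-independent.
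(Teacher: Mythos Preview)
Your proposal is correct and matches the paper's intent: the corollary is stated immediately after Lemma~\ref{lem:Lip} with no separate proof, precisely because it follows by applying that lemma componentwise, together with the trivial observation that boundedness of $\pi_0$ is metric-independent. There is nothing to add.
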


\begin{lem}\label{lem:Lip-Z}
Given the hypotheses of Proposition~\ref{prop:common-Z} and Lemma~\ref{lem:W-to-X}.
If $\pi:(X, d_j) \to Y$ is
Lipschitz $K_j$, and 
$F_j:(W_j,d_Z)\to (X, d_j)$ of Lemma~\ref{lem:W-to-X},
then
\be\label{eq:tilde}
\tilde{\pi}:(W_j, d_Z)\to Y
\textrm{ such that }
\tilde{\pi}(z)=\pi \circ F_j(z)
\ee
is also Lipschitz $K_j$ and satisfies
\be
\tilde{\pi}(\chi_\infty(x))=\pi(x)\quad \,\forall x\in X.
\ee
\end{lem}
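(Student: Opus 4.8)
The plan is to obtain $\tilde\pi$ simply as the composition $\pi\circ F_j$ and to read off its Lipschitz constant from the Lipschitz constants of the two factors. First I would recall from Lemma~\ref{lem:W-to-X} that $F_j:(W_j,d_Z)\to(X,d_j)$ is $1$-Lipschitz, i.e. $d_j(F_j(z_1),F_j(z_2))\le d_Z(z_1,z_2)$ for all $z_1,z_2\in W_j$. Then, by hypothesis, $\pi:(X,d_j)\to Y$ is $K_j$-Lipschitz, so $d_Y(\pi(x_1),\pi(x_2))\le K_j\,d_j(x_1,x_2)$. Chaining these two inequalities gives, for all $z_1,z_2\in W_j$,
\be
d_Y(\tilde\pi(z_1),\tilde\pi(z_2))=d_Y\big(\pi(F_j(z_1)),\pi(F_j(z_2))\big)\le K_j\,d_j(F_j(z_1),F_j(z_2))\le K_j\,d_Z(z_1,z_2),
\ee
which is exactly the assertion that $\tilde\pi$ is $K_j$-Lipschitz with respect to $d_Z$ on $W_j$.

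It remains to verify the compatibility identity $\tilde\pi(\chi_\infty(x))=\pi(x)$ for all $x\in X$. Here I would use the explicit description of $\chi_\infty$ and $F_j$: by Proposition~\ref{prop:common-Z} we have $\chi_\infty(x)=\zeta_j(x,0)\in Z_j\subset W_j$ for every $j$ (note $\chi_\infty(X)=Z_0\subset W_j$ since $Z_0$ lies in every page), and by the defining formula for $F_j$ in Lemma~\ref{lem:W-to-X}, $F_j(\zeta_k(x,t))=x$ for all $k\ge j$ and $t\in[0,h_k]$; in particular $F_j(\zeta_j(x,0))=x$. Therefore
\be
\tilde\pi(\chi_\infty(x))=\pi\big(F_j(\zeta_j(x,0))\big)=\pi(x)\quad\forall x\in X,
\ee
as claimed.

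There is no real obstacle here; the lemma is a bookkeeping step packaging the $1$-Lipschitz retraction $F_j$ together with a given Lipschitz map into a single Lipschitz map on the common space $W_j$. The only point that warrants a line of care is that $\chi_\infty(X)=Z_0$ genuinely sits inside the domain $W_j=\bigcup_{k\ge j}\chi_k(Z_k)$ of $F_j$ — this holds because the spine $Z_0=\zeta_k(X\times\{0\})$ is contained in $\zeta_k(Z_k)$ for every $k$, in particular for $k=j$ — so that $\tilde\pi$ is in fact defined at every point $\chi_\infty(x)$. If one wished to be slightly more careful about the constant, one can note that the Lipschitz constant of $\tilde\pi$ is at most $K_j$, which is all that is needed for the later application to tuples via Corollary~\ref{cor:tuple}.
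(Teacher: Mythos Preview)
Your proof is correct and follows the same approach as the paper: both obtain the Lipschitz bound by chaining the $K_j$-Lipschitz estimate for $\pi$ with the $1$-Lipschitz estimate for $F_j$ from Lemma~\ref{lem:W-to-X}. Your write-up is in fact more complete than the paper's, which omits the explicit verification of $\tilde\pi(\chi_\infty(x))=\pi(x)$ and the check that $\chi_\infty(X)\subset W_j$.
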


\begin{rmrk}\label{rmrk:Lip-Z}
Note that if we try to define
$\tilde{\pi}$ as a function from all
of $(Z,d_Z)$ to ${\mathbb R}$
as in (\ref{eq:tilde}) then it might not
be Lipschitz.   The proof depends
strongly on the restriction to
$W_j\subset Z$ and Lemma~\ref{lem:W-to-X}.
\end{rmrk}

\begin{proof}
Recall
$F_j:(W_j,d_Z)\to (X, d_j)$
are Lipschitz one functions, so
\be
d_{Y}(\tilde{\pi}(z_1),
\tilde{\pi}(z_2))
\le K_j \,d_j(F_j(z_1),F_j(z_2))
\le K_j \,d_{Z}(z_1,z_2)
\quad \forall z_1,z_2\in Z.
\ee
\end{proof}

\subsection{Charts into our Metric Space}

We say that a map $\psi: U\subset {\mathbb R}^m \to X$ from a Borel subset $U$ in ${\mathbb R}^m$ into a metric space, $(X, d_X)$, is a Lipschitz chart if there exists $K_{\psi} > 0$
such that
\be\label{eq:LipchartK}
d_X(\psi(a_1),\psi(a_2))\le K_\psi |a_1-a_2| \quad \forall a_1,a_2\subset U.
\ee
These charts will be applied in the next section to define rectifiable and integral currents, and then
integral current spaces in the following section.

\begin{lem}
\label{lem:charts}
Suppose $(X,d_a)$ and $(X, d_b)$ are two metric spaces and 
\be \label{eq:abcharts}
d_a(x,y) \le d_b(x,y)\quad \forall x,y\in X.
\ee
If 
$\psi: U\subset {\mathbb R}^m \to X$ is a Lipschitz chart with respect to $d_b$ on $X$, then it is 
Lipschitz with respect to $d_{a}$
 on $X$.   
\end{lem}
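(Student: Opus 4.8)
The plan is to reduce this statement to the already-proven Lemma~\ref{lem:charts}... wait, this \emph{is} Lemma~\ref{lem:charts}. So I should prove it directly, and the natural approach is to simply chain the Lipschitz estimate for $\psi$ with respect to $d_b$ together with the pointwise inequality $d_a \le d_b$, exactly as was done for Lemma~\ref{lem:Lip} and Corollary~\ref{cor:tuple}. That is, the whole content is a one-line computation.

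\begin{proof}
Let $K_\psi > 0$ be a Lipschitz constant for $\psi$ with respect to $d_b$, so that
\be
d_b(\psi(a_1),\psi(a_2))\le K_\psi\, |a_1-a_2| \quad \forall a_1,a_2\in U.
\ee
Then by the hypothesis \eqref{eq:abcharts} that $d_a(x,y)\le d_b(x,y)$ for all $x,y\in X$, applied to $x=\psi(a_1)$ and $y=\psi(a_2)$, we obtain
\be
d_a(\psi(a_1),\psi(a_2))\le d_b(\psi(a_1),\psi(a_2))\le K_\psi\, |a_1-a_2| \quad \forall a_1,a_2\in U,
\ee
so $\psi$ is a Lipschitz chart with respect to $d_a$ on $X$ with the same constant $K_\psi$.
\end{proof}

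There is no real obstacle here: the result is the Lipschitz-chart analogue of Lemma~\ref{lem:Lip} and Corollary~\ref{cor:tuple}, and the proof is the same two-inequality chain. The only point worth flagging is bookkeeping: one must make sure the inequality $d_a\le d_b$ is invoked at the pair of points $\psi(a_1),\psi(a_2)$ that actually occur in the definition \eqref{eq:LipchartK} of a Lipschitz chart, and that the Lipschitz constant is preserved (it is, since passing to the smaller distance cannot force a larger constant). This lemma will be used in the next subsection to guarantee that Lipschitz charts defining rectifiable and integral currents for $(X,d_\infty)$ remain Lipschitz charts for each $(X,d_j)$, so that the current structure $[[M]]$ transfers across the monotone family.
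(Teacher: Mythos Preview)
Your proof is correct and is essentially identical to the paper's own proof: both fix a Lipschitz constant $K_\psi$ for $\psi$ with respect to $d_b$ and then chain the inequality $d_a\le d_b$ to conclude the same constant works for $d_a$. Your added remark that the Lipschitz constant is preserved matches the paper's implicit conclusion.
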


\begin{proof}
We have a Lipschitz constant $K_\psi\in (0,\infty)$ such that
\be
d_b(\psi(a_1),\psi(a_2))
\le K_\psi |a_1-a_2| \qquad \forall a_1,a_2\in U.
\ee
By (\ref{eq:abcharts}), we have
\be
d_a(\psi(a_1),\psi(a_2))
\le K_{\psi} |a_1-a_2| \qquad \forall a_1,a_2\in U.
\ee
\end{proof}

\begin{rmrk}
Sometimes in the definition of a rectifiable space or current it is written that the charts $\psi_i:U_i \to X$ are bi-Lipschitz. However, once one has a countable collection of Lipschitz charts, one can replace them with a collection of
bi-Lipschitz charts by Lemma 4 in Kirchheim's \cite{Kirchheim-rect} (cf. Lemma 4.1 of \cite{AK}).
The bi-Lipschitz charts are not necessarily the same collection of charts.
If we apply our Lemma~\ref{lem:charts} to a collection of charts $\psi_i:(U_i\subset{\mathbb R}^m, d_{\mathbb R^m})\to (X,d_b)$ that are bi-Lipschitz onto their images, we conclude they are Lipschitz as charts into $(X,d_a)$ but cannot determine if they are bi-Lipschitz.  We would need to apply Kirchheim's theorem to produce a new collection of bi-Lipschitz charts into $(X,d_b)$ if we need them.
\end{rmrk}

\subsection{Ambrosio-Kirchheim's integral currents on complete metric spaces}

Here we review the work of
Ambrosio-Kirchheim in \cite{AK}.
They defined $m$ dimensional currents on complete metric spaces, $(Z,d_Z)$ as
multilinear functionals, $T$, acting on
Lipschitz tuples satisfying various hypotheses \cite{AK}.  For example, given a compact $m$-dimensional oriented Riemannian manifold, $(M,g)$, we can define the 
current,
\be
[[M]](\pi_0,...,\pi_m)
=\int_M \pi_0 \, d\pi_1\wedge \cdots\wedge d\pi_m
\ee
which is well defined because Lipschitz
maps are differentiable almost everywhere.  The integration is actually defined by integrating over atlas of disjoint oriented charts and taking the sum.

An $m$ dimensional {\em integer rectifiable current} on a 
complete metric space, $(Z,d_Z)$, is a multilinear functional, $T$, defined
on Lipschitz tuples on $(Z,d_Z)$ that has a
parametrization
which is a countable collection
of Lipschitz charts with non-zero Lipschitz constants
\be
\psi_i:(U_i\subset {\mathbb R}^m,d_{{\mathbb R}^m}) \to (Z,d_Z)
\ee
defined on Borel sets, $U_i\subset {\mathbb R}^m$,
 with integer multiplicities $a_i$,
 such that total Hausdorff measures satisfies
 \be \label{eq:Haus-finite}
 \sum_{i=1}^\infty |a_i| \mathcal{H}_{d_Z}^m(\psi_i(U_i))<\infty.
\ee
The integer
rectifiable current $T$ is defined by
the weighted sum:
\be \label{eq:defn-current-1}
T=\sum_{i=1}^\infty \, a_i\, \psi_{i\#}[[U_i]],
\ee
where
\be \label{eq:defn-current-2}
\psi_{\#}[[U]](\pi_0,\pi_1,...,\pi_m)
= \int_{U} (\pi_0\circ \psi)
d(\pi_1\circ \psi)\wedge\cdots
\wedge d(\pi_m\circ \psi).
\ee
This integral is well defined because the
charts are Lipschitz and thus
differentiable almost everywhere.
The weighted sum of the integrals
defining $T(\pi_0,...,\pi_m)$
is finite because the weighted sum of
the
Hausdorff measures in (\ref{eq:Haus-finite}), $\max\{\pi_0\}$,
and the product $\Lip(\pi_j)$ for
$j=1, \ldots, m$ can be used to bound the
integrals.  Warning: the Lipschitz charts and the Hausdorff measures depend on the distance function on $Z$.
See Lemma~\ref{lem:current} below.

Note that if $(M^m,g)$ is a compact oriented Riemannian manifold with boundary, with a smooth atlas of oriented charts $\psi_i:(W_i\subset {\mathbb R}^m,d_{{\mathbb R}^m}) \to (M^m,d_g)$ then we can choose $U_i\subset W_i$ such that
$\psi_i(U_i)$ are disjoint and their union covers $M^m$.
This defines a natural current structure for $M$
with weight $1$, which agrees with integration of
differential $m$ forms:
\be
[[M]](\pi_0,...,\pi_m)=\sum_{i=1}^N \, \psi_{i\#}[[U_i]]=\int_M \pi_0 \, d\pi_1\wedge \cdots \wedge d\pi_m.
\ee
By Stoke's Theorem, for any collection of
smooth functions, $\pi_i:M \to {\mathbb R}$,
for $i=1,...,m-1$, we have
\be
\int_{\partial M} \pi_1 \, d\pi_2\wedge 
\cdots \wedge d\pi_{m-1}
= \int_M 1 \, d\pi_1 \wedge\cdots \wedge d\pi_{m-1}.
\ee
To be consistent with Stoke's Theorem,
Ambrosio-Kirchheim define the boundary of a current, $T$,
as follows
\be \label{eq:defn-bndry}
\partial T(\pi_1,...,\pi_{m-1})
=T(1,\pi_1,...,\pi_{m-1}),
\ee
for any Lipschitz tuple $(\pi_1,...,\pi_{m-1})$ on $Z$.

They define an {\em integral current} to be an integer
rectifiable current whose boundary
is also integer rectifiable.   Be warned however that the parametrization of the boundary, $\partial T$, is not necessarily found by taking the boundaries of the atlas of charts parametrizing, $T$.

The {\em mass measure}, $||T||_{d_Z}$, 
of a current $T$ on a complete metric space, $(Z,d_Z)$, is the smallest finite Borel measure such that
\be
T(\pi_0,\pi_1,...,\pi_m)
\le \prod_{i=1}^m \Lip(\pi_i)\,\,\int_Z |\pi_0| \,\,||T||_{d_Z} 
\ee
for all tuples and the {\em mass} of $T$ is
\be\label{eq:defn-mass}
\mass_{d_Z}(T)=||T||_{d_Z}(Z).
\ee
The support of a current
is
\be\label{eq:defn-spt}
\spt_{d_Z}(T)=
\{z\in Z\,:\, ||T||_{d_Z}(B_{d_Z}(z,r))>0 \,\, \forall r>0\}
\ee
and the set of an
$m$ dimensional current
is
\be\label{eq:defn-set}
\set_{d_Z}(T)=
\left\{z\in Z\,:\, \liminf_{r\to 0} ||T||_{d_Z}(B_{d_Z}(z,r))/r^m>0 \,\, \forall r>0\right\}.
\ee
Ambrosio-Kirchheim prove that $\set_{d_Z}(T)$ is rectifiable and that
the closure of this
set is the support of $T$,
\be\label{eq:spt-set}
\spt(T)=\overline{\set_{d_Z}(T)}.
\ee

Given a Lipschitz map, $F:(X,d_X)\to(Y,d_Y)$,
the pushforward of an integral current, $T$, on
$(X,d_X)$, to an integral current, $F_\#(T)$,
on $(Y,d_Y)$ is defined by
\be\label{eq:defn-push}
F_\#(T)(\pi_0,...,\pi_m)=T(\pi_0\circ F,...,\pi_m\circ F)
\ee
for any Lipschitz tuple on $(Y,d_Y)$.  To see that
$F_\#(T)$ is an integral current, Ambrosio-Kirchheim
prove that
the mass is bounded as follows:
\be\label{eq-massPush}
\mass_{d_Y}(F_\#(T))\le (\Lip(F))^m \mass_{d_X}(T).
\ee
For our purposes we need the following lemma.

\begin{lem}\label{lem:current}
Suppose $(X,d_a)$ and $(X, d_b)$ are two metric spaces and 
\be \label{eq:abcurrent}
d_a(x,y) \le d_b(x,y)\quad \forall x,y\in X.
\ee
If $S$ is an integral current on $(X,d_b)$ then it is an integral current on $(X,d_a)$ defined using the same collection of Lipschitz charts and weights with
\be\label{eq:mass-S}
\mass_{d_a}(S)\le \mass_{d_b}(S)
\textrm{ and }
\mass_{d_a}(\partial S)\le \mass_{d_b}(\partial S).
\ee
The mass measures also satisfy
\be\label{eq:measure-S}
||S||_{d_a} \le ||S||_{d_b}
\textrm{ and }
||\partial S||_{d_a} \le ||\partial S||_{d_b}.
\ee
\end{lem}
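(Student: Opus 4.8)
The plan is to exploit the one fact available, $d_a\le d_b$, which makes the identity map $\iota\colon(X,d_b)\to(X,d_a)$ a $1$-Lipschitz bijection: distances only shrink, and Hausdorff measures can only decrease. Two preliminary observations do almost all of the work. First, by Lemma~\ref{lem:charts} the Lipschitz charts $\psi_i\colon U_i\subset\mathbb{R}^m\to X$ parametrizing $S$ on $(X,d_b)$ remain Lipschitz charts into $(X,d_a)$, and by Lemma~\ref{lem:Lip} (and Corollary~\ref{cor:tuple}) every $d_a$-Lipschitz function, resp.\ tuple, is $d_b$-Lipschitz, with $\Lip_{d_b}(\pi)\le\Lip_{d_a}(\pi)$. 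Second, $\mathcal{H}^m_{d_a}(A)\le\mathcal{H}^m_{d_b}(A)$ for every $A\subset X$: a cover of $A$ by sets of $d_b$-diameter at most $\delta$ is a cover by sets of $d_a$-diameter at most $\delta$, and the corresponding diameter sums do not increase.

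First I would verify that $S$, with its given parametrization and weights, is an integral current on $(X,d_a)$. Writing $S=\sum_i a_i\,\psi_{i\#}[[U_i]]$ as in \eqref{eq:defn-current-1}, each functional $\psi_{i\#}[[U_i]]$ is given by integrating pullbacks of tuples over $U_i\subset\mathbb{R}^m$ and hence is insensitive to the metric on $X$, so $S$ restricts to a multilinear functional on the (smaller) class of $d_a$-Lipschitz tuples. The $\psi_i$ are Lipschitz charts for $d_a$ by Lemma~\ref{lem:charts}, and $\sum_i|a_i|\,\mathcal{H}^m_{d_a}(\psi_i(U_i))\le\sum_i|a_i|\,\mathcal{H}^m_{d_b}(\psi_i(U_i))<\infty$, so the finiteness condition \eqref{eq:Haus-finite} survives; hence $S$ is integer rectifiable on $(X,d_a)$ with the same charts and weights. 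Since the boundary operator is purely functional, \eqref{eq:defn-bndry}, the boundary of $S$ computed in $(X,d_a)$ is the same functional as $\partial S$ in $(X,d_b)$; applying the preceding argument to a parametrization of $\partial S$ (which exists because $S$ is an integral current on $(X,d_b)$) shows $\partial S$ is integer rectifiable on $(X,d_a)$, so $S$ is an integral current there. Equivalently one can route everything through $\iota$: $\iota_\#S$ is an integral current on $(X,d_a)$ by Ambrosio--Kirchheim, while $\iota_\#S=S$ and $\partial(\iota_\#S)=\iota_\#(\partial S)$ as functionals, the last identity being immediate from \eqref{eq:defn-bndry}, \eqref{eq:defn-push} and $1\circ\iota=1$.

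Next I would obtain the four inequalities from the minimality in the definition of the mass measure. Because $d_a\le d_b$ the $d_b$-topology is finer than the $d_a$-topology, so each $d_a$-Borel set is $d_b$-Borel and $||S||_{d_b}$ restricts to a finite Borel measure on $(X,d_a)$. For any $d_a$-Lipschitz tuple $(\pi_0,\dots,\pi_m)$, which is also $d_b$-Lipschitz with $\Lip_{d_b}(\pi_i)\le\Lip_{d_a}(\pi_i)$, we get
\[
S(\pi_0,\dots,\pi_m)\le\prod_{i=1}^m\Lip_{d_b}(\pi_i)\int_X|\pi_0|\,||S||_{d_b}\le\prod_{i=1}^m\Lip_{d_a}(\pi_i)\int_X|\pi_0|\,||S||_{d_b}.
\]
Thus $||S||_{d_b}$ is an admissible competitor in the variational definition of the smallest such measure $||S||_{d_a}$, giving $||S||_{d_a}\le||S||_{d_b}$. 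The identical argument applied to the integral current $\partial S$ gives $||\partial S||_{d_a}\le||\partial S||_{d_b}$, and evaluating both inequalities at $X$ with \eqref{eq:defn-mass} yields $\mass_{d_a}(S)\le\mass_{d_b}(S)$ and $\mass_{d_a}(\partial S)\le\mass_{d_b}(\partial S)$.

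I do not expect a genuine obstacle; this is essentially a bookkeeping lemma and the only subtleties are small. One must be careful that the \emph{same} multilinear functional $S$ qualifies as a current in both spaces --- this works because passing from $d_b$ to $d_a$ shrinks the class of admissible test tuples (so $S$ simply restricts) while enlarging the class of admissible charts (Lemma~\ref{lem:charts}), so nothing is lost on either side. One must also note that $||S||_{d_b}$, restricted to the $d_a$-Borel $\sigma$-algebra, is an admissible competitor for $||S||_{d_a}$, which is the topology remark above together with $\Lip_{d_b}\le\Lip_{d_a}$. Finally it is worth recording the elementary monotonicity $\mathcal{H}^m_{d_a}\le\mathcal{H}^m_{d_b}$ explicitly, as that is exactly what keeps \eqref{eq:Haus-finite} intact when switching metrics.
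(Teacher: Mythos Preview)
Your proposal is correct and follows essentially the same approach as the paper: use Lemma~\ref{lem:charts} to keep the same parametrization, use Corollary~\ref{cor:tuple} to see that $d_a$-tuples are $d_b$-tuples, and then argue that $||S||_{d_b}$ is an admissible competitor in the minimality definition of $||S||_{d_a}$. You are in fact more careful than the paper on several points the authors leave implicit --- the monotonicity $\mathcal{H}^m_{d_a}\le\mathcal{H}^m_{d_b}$ needed for \eqref{eq:Haus-finite}, the explicit inequality $\Lip_{d_b}(\pi_i)\le\Lip_{d_a}(\pi_i)$, and the Borel $\sigma$-algebra check --- and your alternative route via $\iota_\#$ is a clean way to package the same content.
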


\begin{proof}
By the definition of integral current structure on $(X,d_b)$, there is a countable collection of $d_b$-Lipschitz charts, $\psi_i:U_i \subset {\mathbb R}^m \to X$,
and integers $a_i$ such that
\be\label{eq:S-acts}
S(\omega)=
\sum_{i=1}^\infty a_i \psi_{i \#}[[U_i]](\omega)
\ee
for any $m$ tuple, $\omega$, of $d_b$-Lipschitz functions.
By Lemma~\ref{lem:charts} these charts
are also $d_a$-Lipschitz charts, so
the weighted sum in (\ref{eq:S-acts}) is
well defined acting on any $m$ tuple, $\omega$, of $d_a$-Lipschitz functions. Note that 
the same can be done for $\partial S$. 
Thus $S$ is an integral current on $(X,d_a)$.

 To estimate the inequalities of the mass measures, recall that
in \cite{AK} the mass measure of a current, $S$, on a metric space $(X,d)$ is defined as
 the smallest Borel measure, $\mu$, on $X$ such that
\be\label{eq:mass}
|S(\pi_0,\pi_1,...,\pi_m)|\le \prod_{k=1}^m \Lip_d(\pi_k)\int_X |\pi_0| \,d\mu 
\ee
for all tuples, $(\pi_0,\pi_1,...,\pi_m)$, of $d$-Lipschitz functions.
   By Corollary~\ref{cor:tuple}, for
any tuple, $(\pi_0,\pi_1,...,\pi_m)$, of $d_a$-Lipschitz functions, we also
have (\ref{eq:mass}) with $\mu=||S||_{d_b}$.  Thus the smallest
measure for the tuples of $d_a$-Lipschitz functions exists and is smaller,
\be
||S||_{d_a}\le ||S||_{d_b}.
\ee
The same holds for $\partial S$ which implies the total mass estimate in
 \eqref{eq:mass-S}.
\end{proof}

Note the above lemma implies that distance preserving maps push forward currents conserving their masses, boundaries, and boundary masses. 

\begin{rmrk}\label{rmrk:current}
The converse of Lemma~\ref{lem:current}
does not hold in general.  If $T$
is an integral current on $(X,d_a)$, it does not necessarily define an integral current on $(X,d_b)$
for $d_b\ge d_a$.
A parametrization of $T$ by Lipschitz charts into $(X,d_a)$ might not give
Lipschitz charts into $(X,d_b)$ because that
would require a converse to Lemma~\ref{lem:charts} (which we do not have).   In addition, 
$T$ does not necessarily act on tuples in $(X,d_b)$ because that
would require a converse to Lemma~\ref{lem:Lip} (which we do not have).  Note that if one could prove that either or both of the above were true for some special $T$, one would still need to verify that the mass of $T$ and $\partial T$ are finite with respect to $d_b$
to conclude that $T$ is an integral current on $(X,d_b)$.
\end{rmrk}

Soon we will be considering a sequence of metric spaces $(X,d_j)$ with an integral current $T$, and 
the limit, $(X,d_\infty)$. Since
$d_\infty\ge d_j$, we will need to overcome the lack of a converse to Lemma~\ref{lem:current}. We will see later in the proof of Lemma~\ref{lem:ex}, that we can sometimes try
to rewrite a parametrization of an integral
current on $(X,d_j)$ to obtain charts that
are also Lipschitz into $(X,d_\infty)$.  We will see later in the proof of 
Theorem~\ref{thm:SWIF-cmpct}, which is needed to
complete the proof of Theorem~\ref{thm:Riem},
that we can sometimes find
an integral current, $T_\infty$, on $(X,d_\infty)$ and later show $T_\infty=T$
in the following sense:

\begin{defn}\label{defn:equal-current}
If we have an integral current $T$
on $(X,d_a)$ and there is also
an integral current, $S$, on $(X,d_b)$ 
where $d_b\ge d_a$ such that  
\be
S(\pi_0, \pi_1,...,\pi_m)
=T(\pi_0, \pi_1,...,\pi_m)
\ee
for any tuple of $d_a$ Lipschitz functions on $X$, then we say that ``$S=T$
viewed
as an integral current on $(X,d_a)$''.
That is, we can use any
parametrization of $S$ and $\partial S $ with
charts into $(X,d_b)$ as a parametrization for $T$ and $\partial T$ with the same weights and
charts into $(X,d_a)$
respectively.  
\footnote{Note that Lemma~\ref{lem:current} applies to $S$
but not $T$ as discussed in Remark~\ref{rmrk:current}.}
\end{defn}

\subsection{Weak Convergence of Currents}
A sequence $T_{j}$
{\em converges weakly as currents} in $(Z,d_Z)$
to an integral current, $T_\infty$
iff
\be \label{eq:defn-wk}
T_{j}(\pi_0, \pi_1,...,\pi_m)
\to T_{\infty}(\pi_0, \pi_1,...,\pi_m)
\ee
for all tuples, $(\pi_0, \pi_1,...,\pi_m)$
on $(Z,d_Z)$.

We will apply the following powerful theorem of Ambrosio-Kirchheim:

\begin{thm}[Ambrosio-Kirchheim]\label{thm-AKcompact}
If $Z$ is compact and $T_j$ have
uniform upper bounds
\be
\mass(T_j)\le V_0 \textrm{ and }\mass(\partial T_j) \le A_0
\ee
then a subsequence $T_{j_k}$ converges
weakly to an integral current, $T_\infty$,
and the mass measure is lower semicontinuous:
\be\label{eq:semi}
||T_\infty||(U)\le \liminf_{k\to \infty} ||T_{j_k}||(U)
\ee
for any open set $U \subset Z$. 
\end{thm}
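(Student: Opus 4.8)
This is the Ambrosio--Kirchheim weak compactness and closure theorem, quoted verbatim from \cite{AK}; a self-contained proof would reproduce a substantial part of that paper, so the plan is to recall its structure rather than reprove it. \emph{Step 1 (weak-$*$ compactness of the functionals).} Since $Z$ is compact, $C(Z)$ is separable, and for each $L\in{\mathbb N}$ the set of $L$-Lipschitz functions $\pi\colon Z\to{\mathbb R}$ with $\|\pi\|_\infty\le L$ is totally bounded in the sup norm by Arzel\`a--Ascoli; combining over $L$ and over the $m{+}1$ slots yields a countable family of Lipschitz tuples dense in the sense that every tuple is approximated by one of them with comparable Lipschitz constants and sup norms. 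The mass bound gives $|T_j(\pi_0,\dots,\pi_m)|\le V_0\,\|\pi_0\|_\infty\prod_{i=1}^m\Lip(\pi_i)$, so a diagonal argument over this countable family produces a subsequence $T_{j_k}$ for which $T_{j_k}$ converges on each tuple of the family; the uniform estimate and multilinearity then force $T_{j_k}(\omega)\to T_\infty(\omega)$ for \emph{every} Lipschitz tuple $\omega$. One checks that $T_\infty$ is multilinear, satisfies the continuity axiom of \cite{AK} (the uniform mass bound lets one pass to the limit in the dominated-convergence step) and the locality axiom, hence is a metric functional.

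\emph{Step 2 (finite mass and lower semicontinuity).} Using the standard variational description of the mass measure on open sets, $\|T_\infty\|(U)$ equals the supremum of $T_\infty(\pi_0,\dots,\pi_m)$ over tuples with $\Lip(\pi_i)\le1$, $|\pi_0|\le1$, and $\spt(\pi_0)$ a compact subset of $U$. For each such tuple $T_\infty(\pi_0,\dots,\pi_m)=\lim_k T_{j_k}(\pi_0,\dots,\pi_m)\le\liminf_k\|T_{j_k}\|(U)$; taking the supremum gives \eqref{eq:semi} and in particular $\mass(T_\infty)\le V_0$. The same argument applied to the boundaries, noting $\partial T_\infty(\pi_1,\dots,\pi_{m-1})=T_\infty(1,\pi_1,\dots,\pi_{m-1})=\lim_k\partial T_{j_k}(\pi_1,\dots,\pi_{m-1})$, gives $\mass(\partial T_\infty)\le A_0$. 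So $T_\infty$ is a normal metric current, and so is each of its slices used below.

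\emph{Step 3 (integer rectifiability: the closure theorem).} This is the substantive content, proved in \cite{AK} by induction on the dimension $m$. For $m=0$, the bound $\mass(T_{j_k})\le V_0$ limits the number of atoms of each $T_{j_k}$ to at most $V_0$, so along a further subsequence the positions and the (bounded, hence eventually constant) integer weights converge, forcing $T_\infty$ to be an integer combination of Dirac masses. For the inductive step one slices by $1$-Lipschitz functions $f\colon Z\to{\mathbb R}$: for a.e.\ $t$ the slices $\langle T_{j_k},f,t\rangle$ are integer rectifiable $(m{-}1)$-currents with $\int\mass(\langle T_{j_k},f,t\rangle)\,dt\le\Lip(f)\,\mass(T_{j_k})$; passing (along a further subsequence, for a.e.\ $t$) to weak limits of slices, identifying them with $\langle T_\infty,f,t\rangle$, and invoking the inductive hypothesis shows these limit slices are integer rectifiable. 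One then applies the metric rectifiability criterion --- a normal current whose generic slices by enough Lipschitz maps into ${\mathbb R}$ are integer rectifiable is concentrated on a countably $\mathcal H^m$-rectifiable set with integer density --- which rests on Kirchheim's structure theory \cite{Kirchheim-rect} for rectifiable subsets of metric spaces and on Lipschitz projections into ${\mathbb R}^m$. Running the same machinery on the $(m{-}1)$-currents $\partial T_{j_k}$ (integer rectifiable because each $T_{j_k}$ is an integral current) together with $\mass(\partial T_\infty)\le A_0$ from Step 2 shows $\partial T_\infty$ is integer rectifiable; hence $T_\infty$ is an integral current.

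\emph{Main obstacle.} Steps 1 and 2 are soft functional analysis and are essentially routine. The real difficulty is Step 3, the closure theorem --- persistence of integer rectifiability under weak limits --- which is a deep result requiring the metric analogues of the Federer--Fleming rectifiability and boundary-rectifiability theorems and Kirchheim's structure theory for rectifiable metric spaces. Since all of this is developed in full in \cite{AK}, in the present paper we simply invoke Theorem~\ref{thm-AKcompact} from there.
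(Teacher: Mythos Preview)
Your proposal is correct and matches the paper's treatment: Theorem~\ref{thm-AKcompact} is not proved in this paper at all but is simply quoted from \cite{AK} and invoked as a black box, exactly as you say in your final sentence. Your sketch of the three-step structure of the Ambrosio--Kirchheim argument is accurate and a helpful gloss, but the paper itself offers no proof beyond the citation.
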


\subsection{Integral Current Spaces}

In \cite{SW-JDG}, Sormani and Wenger define an $m$-dimensional {\em integral current space}, $(X,d,T)$, as a metric space, $(X,d)$, with an integral current, $T$, defined on the closure of $(X,d)$
such that the
$\set(T)=X$ where
\be\label{eq:defn-set-2}
\set(T)=\left\{ x\in \bar{X} \,:\,
\liminf_{r\to 0} ||T||_d(B_d(x,r))/r^m >0\right\}.
\ee
Such metric spaces $(X, d)$ are rectifiable and parametrized by the charts that parametrize their integral current structure, $T$.

Given a smooth compact oriented $m$-dimensional Riemannian manifold with boundary, $(M,g)$, it can be viewed as an integral current space, $(M, d_g,[[M]])$.
Here the mass measure is the volume measure,
\be \label{eq:mass=vol}
\mass_{d_g}(M)=\vol_g(M),
\ee
and
\be \label{eq:set=M}
\set_{d_g}([[M]])=
\left\{ x\in M\,:\,
\liminf_{r\to 0} \vol_g(B_{d_g}(x,r))/r^m >0\right\}=M.
\ee

The following lemma depicted in Figure~\ref{fig:cusp-cone} is useful for
constructing examples:

\begin{lem}\label{lem:ex}
Suppose the sphere
${\mathbb S}^2$ is endowed with
the standard round
metric, $g_{{\mathbb S}^2}=dr^2+\sin^2(r) \,d\theta^2$, 
defined 
on a sphere parametrized by $r\in [0,\pi]$
and $\theta\in [0,2\pi]$.
We define the possibly singular metric, $g_h$,
induced by a graph, $h\circ r:{\mathbb S}^2 \to [0,\infty)$
so that
\be
g_h=dr^2+\sin^2(r) \,d\theta^2+(h'(r))^2 dr^2
\ge
g_{{\mathbb S}^2}=dr^2+\sin^2(r)\, d\theta^2
\ee
where $h(r(x))$ is smooth on $r^{-1}(0,\pi)$, increasing on
$[0,\pi/2]$ and decreasing on
$[\pi/2,\pi]$ with $h(r)=h(\pi-r)$,
but possibly only continuous at the poles
$p_0,p_\pi$ where $r=0,\pi$
respectively.   We can define
a unique compact metric space,
$({\mathbb S}^2,d_h)$,
using $g_h$-lengths of curves, $C$,
such that 
\be \label{eq:d_h}
C:(0,1)\to {\mathbb S}^2\setminus\{p_0,p_\pi\}
\ee
with 
bounded radial arclength functions from the poles $p=p_0,p_\pi$, 
\be
s_p(r(x))=d_h(x,p)=
\int_0^{r(x)}\sqrt{1+h'(r)^2}\, dr
\textrm{ with } \lim_{r\to 0}s_p(r)=0.
\ee
So that 
$({\mathbb S}^2, d_{h})$ is the metric
completion of 
$({\mathbb S}^2\setminus \{p_0,p_\pi\}, d_{g_h})$.

We can find a parametrization of the standard integral current $[[{\mathbb S}^2]]$ on $({\mathbb S}^2, d_{{\mathbb S}^2})$
by a countable collection of Lipschitz charts into
$({\mathbb S}^2,d_h)$ which map
onto $r^{-1}(0,\pi)$ so that
we have an integral current space
\be\label{lem:ex-set}
\left(\set_{d_h}([[{\mathbb S}^2]]), d_h, T_h\right)
\textrm{ where }
r^{-1}(0,\pi)\subset \set_{d_h}(T_h)\subset{\mathbb S}^2,
\ee
and where ``$T_h=[[{\mathbb S}^2]]$
viewed
as an integral current on $({\mathbb S}^2,d_{{\mathbb S}^2})$''
in the sense of Definition~\ref{defn:equal-current}.   

So $\spt_{d_h}(T_h)={\mathbb S}^2$
and $\partial T_h=0$.
The poles are included in 
$\set_{d_h}(T_h)$
iff
\be \label{eq:h'0}
\lim_{r\to 0} |h'(r)|<\infty.
\ee
\end{lem}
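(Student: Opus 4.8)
The plan is to build the singular metric $d_h$ explicitly from radial arclength, verify it is a genuine compact metric space whose restriction to the punctured sphere has the Riemannian distance $d_{g_h}$, and then transport the standard current $[[\mathbb{S}^2]]$ across a carefully chosen atlas. First I would establish that $s_p(r) = \int_0^r \sqrt{1+h'(t)^2}\,dt$ is well-defined and finite on $[0,\pi/2]$ (hence on $[0,\pi]$ by the symmetry $h(r)=h(\pi-r)$), using that $h$ is smooth on $(0,\pi)$ and monotone on each half so $h'$ has constant sign there, and that the integrand is monotone in a neighborhood of each pole; finiteness of $s_p$ at the pole is exactly the hypothesis that $h$ is continuous (not merely smooth) at $p_0,p_\pi$, which bounds the total vertical rise. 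Then define $d_h(x,y)$ for $x,y \in \mathbb{S}^2 \setminus \{p_0,p_\pi\}$ as the infimum of $g_h$-lengths of curves as in \eqref{eq:d_h}, set $d_h(x,p) = s_p(r(x))$ for a pole $p$, and $d_h(p_0,p_\pi) = s_{p_0}(\pi/2) + s_{p_\pi}(\pi/2)$ (or the appropriate infimum). One checks the triangle inequality by concatenating curves and using that any curve from a non-polar point toward a pole has $g_h$-length at least the radial part $\int \sqrt{1+h'(r)^2}\,|dr|$; since $\lim_{r\to 0} s_p(r) = 0$, the topology induced by $d_h$ agrees with the sphere topology near the poles, so $(\mathbb{S}^2, d_h)$ is compact and is the metric completion of $(\mathbb{S}^2 \setminus \{p_0,p_\pi\}, d_{g_h})$.

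Next I would produce the current structure. On the open set $r^{-1}(0,\pi)$ the metric $g_h$ is smooth, so $(\mathbb{S}^2 \setminus \{p_0,p_\pi\}, d_{g_h})$ is an ordinary Riemannian manifold and carries its standard current $[[\,\cdot\,]]$ with an atlas of oriented smooth (hence Lipschitz) charts $\psi_i : U_i \subset \mathbb{R}^2 \to r^{-1}(0,\pi)$ with disjoint images covering $r^{-1}(0,\pi)$ and total weight $1$. Because $g_h \ge g_{\mathbb{S}^2}$, Lemma~\ref{lem:charts} applied with $d_a = d_{\mathbb{S}^2}$, $d_b = d_h$ (restricted to the punctured sphere, then noting $d_h \ge d_{g_h}$ only where both are defined — more precisely I compare on $r^{-1}(0,\pi)$ where $d_{\mathbb{S}^2} \le d_h$) shows these are also Lipschitz charts into $(\mathbb{S}^2, d_{\mathbb{S}^2})$, so the weighted sum parametrizes $[[\mathbb{S}^2]]$ in the $d_{\mathbb{S}^2}$-sense — here one uses that $r^{-1}(0,\pi)$ differs from $\mathbb{S}^2$ by the two poles, a set of $\mathcal{H}^2_{d_{\mathbb{S}^2}}$-measure zero, so deleting the poles does not change the current. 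Finiteness of $\sum |a_i| \mathcal{H}^2_{d_h}(\psi_i(U_i)) = \mathcal{H}^2_{d_h}(r^{-1}(0,\pi)) = \operatorname{area}_{g_h}(\mathbb{S}^2 \setminus \{\text{poles}\}) < \infty$ follows from $g_h = g_{\mathbb{S}^2} + (h'(r))^2 dr^2$ having finite total area (the extra term integrates the monotone $h$). This gives an integral current $T_h$ on $(\mathbb{S}^2, d_h)$ with $\partial T_h = 0$ (the charts have no boundary, mirroring $\partial [[\mathbb{S}^2]] = 0$), and by construction $T_h = [[\mathbb{S}^2]]$ viewed as a current on $(\mathbb{S}^2, d_{\mathbb{S}^2})$ in the sense of Definition~\ref{defn:equal-current}. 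Since the mass measure $\|T_h\|_{d_h}$ is the area measure of $g_h$, which is positive on every open subset of $r^{-1}(0,\pi)$, we get $r^{-1}(0,\pi) \subset \set_{d_h}(T_h)$, hence $\spt_{d_h}(T_h) = \overline{\set_{d_h}(T_h)} = \mathbb{S}^2$ by \eqref{eq:spt-set} and density.

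Finally, for the criterion \eqref{eq:h'0} I would examine the density $\liminf_{\rho \to 0} \|T_h\|_{d_h}(B_{d_h}(p,\rho))/\rho^2$ at a pole $p$. The $d_h$-ball of radius $\rho$ about $p$ is the annular region $\{x : s_p(r(x)) < \rho\} = \{r < s_p^{-1}(\rho)\}$, whose $g_h$-area is $\int_0^{s_p^{-1}(\rho)} 2\pi \sin(r)\sqrt{1 + h'(r)^2}\,dr$. If $\lim_{r\to 0}|h'(r)| = L < \infty$, then near the pole $s_p(r) \approx \sqrt{1+L^2}\,r$ and the integrand $\approx 2\pi r \sqrt{1+L^2}$, giving area $\sim \pi r^2 \sqrt{1+L^2} \sim \pi \rho^2 / \sqrt{1+L^2}$, so the density ratio tends to the positive constant $\pi/\sqrt{1+L^2}$ and $p \in \set_{d_h}(T_h)$. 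If $|h'(r)| \to \infty$, then $s_p(r)/r \to \infty$, so $s_p^{-1}(\rho) = o(\rho)$, and the area of $B_{d_h}(p,\rho)$, being $\le \pi (s_p^{-1}(\rho))^2 (1 + \sup_{[0,s_p^{-1}(\rho)]} h'(r)^2)$ — wait, this bound is not obviously $o(\rho^2)$ — so instead I bound the area by $\int_0^{s_p^{-1}(\rho)} 2\pi \sin(r)\sqrt{1+h'(r)^2}\,dr = \int_0^{s_p^{-1}(\rho)} 2\pi \sin(r)\, s_p'(r)\,dr \le 2\pi \sin(s_p^{-1}(\rho)) \cdot \rho$ (integrating by parts or using $\sin$ increasing near $0$), and since $s_p^{-1}(\rho) \to 0$ faster than $\rho$ when $|h'| \to \infty$, $\sin(s_p^{-1}(\rho)) = o(\rho)$, so area$(B_{d_h}(p,\rho)) = o(\rho^2)$ and the density vanishes, so $p \notin \set_{d_h}(T_h)$.

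The main obstacle I anticipate is the density computation at the poles establishing \eqref{eq:h'0}: one must be careful that the upper and lower bounds for $\operatorname{area}_{g_h}(B_{d_h}(p,\rho))$ in terms of $\rho$ are sharp enough — using the substitution $\rho = s_p(r)$, $d\rho = \sqrt{1+h'(r)^2}\,dr$ to rewrite the area integral cleanly as $\int_0^\rho 2\pi \sin(s_p^{-1}(u))\,du$ is the key trick that makes both directions transparent. A secondary technical point is verifying rigorously that the chosen atlas of smooth charts on $r^{-1}(0,\pi)$, which are bi-Lipschitz for $g_h$ away from the poles, remain Lipschitz for $d_{\mathbb{S}^2}$ with finite total Hausdorff mass, and that no mass or boundary is hidden at the two deleted poles.
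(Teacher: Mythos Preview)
Your proposal is correct and follows essentially the same route as the paper: build $d_h$ from radial arclength and identify $(\mathbb{S}^2,d_h)$ as the metric completion of the smooth punctured sphere; parametrize $T_h$ by a countable collection of weight-one charts covering $r^{-1}(0,\pi)$ (the paper does this explicitly with annuli $[\delta_i,\pi-\delta_i]\times[0,2\pi)$, $\delta_i\downarrow 0$, whereas you speak of a general smooth atlas --- be sure your charts have compact image so that ``smooth'' really yields ``Lipschitz into $(\mathbb{S}^2,d_h)$''); and deduce $T_h=[[\mathbb{S}^2]]$ on $(\mathbb{S}^2,d_{\mathbb{S}^2})$, $\partial T_h=0$, and $r^{-1}(0,\pi)\subset\set_{d_h}(T_h)$ exactly as the paper does. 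For the pole criterion the paper applies l'H\^opital twice to $R^{-2}\int_0^R 2\pi\, r(s)\,ds$ to obtain $\lim_{s\to 0}\pi(1+h'(r)^2)^{-1/2}$, which is your substitution $\rho=s_p(r)$ packaged more cleanly; your case-by-case estimates reach the same conclusion.
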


\begin{figure}[h] 
   \centering
   \includegraphics[width=.8 \textwidth]{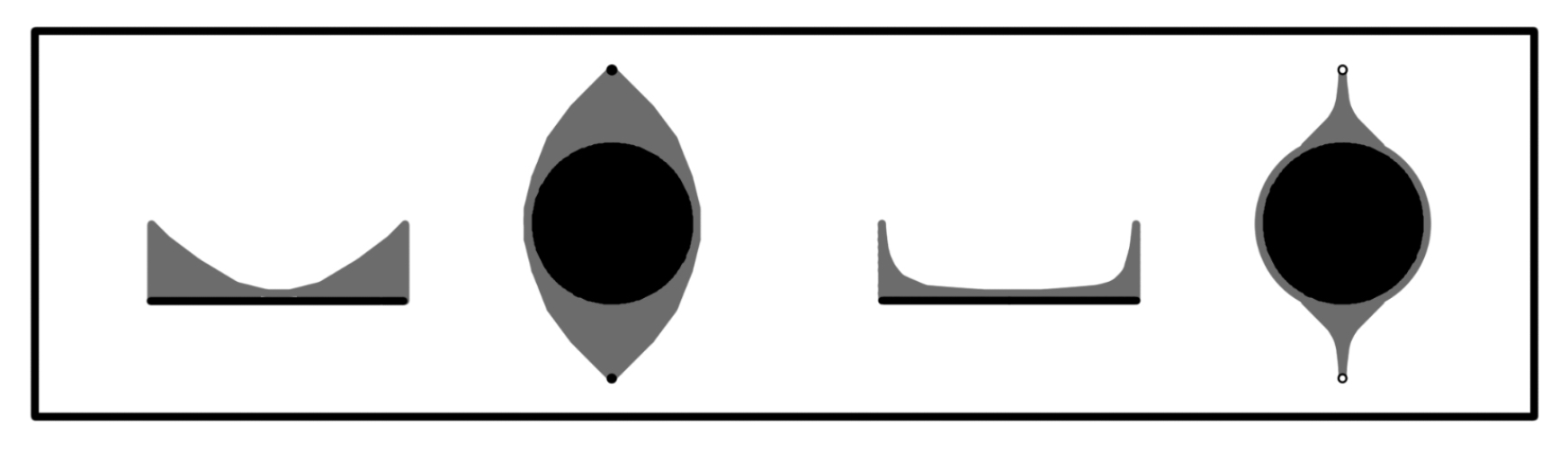} 
\caption{Lemma~\ref{lem:ex} applied 
with $h=h_{cone}$ of 
Example~\ref{ex:cone} on the left (which will include the singular poles) and
with $h=h_{cusp}$ of Example~\ref{ex:cusp}
on the right (which will not include the singular poles due to the cusp).
   }
   \label{fig:cusp-cone}
\end{figure}

\begin{proof}
We easily see that we can define
$d_h$ away from the poles using
lengths of curves that avoid the poles.
Near $p_0$, we have radial arclengths,
\be
s_{p_0}(x)=\int_0^{r(x)} \sqrt{1+h'(r)^2}\, dr
\le \int_0^{r(x)} 1+|h'(r)| dr<\infty
\ee
by the properties of $h$.
So
\be
\lim_{r\to 0}s_{p_0}(r)=
\lim_{r\to 0} r + |h(r)-h(0)| =0
\ee
and similarly for $p_\pi$.
This allows us to define $({\mathbb S}^2, d_h)$ 
as in (\ref{eq:d_h})
so that the identity map to
the standard round sphere is
a homeomorphism
and so
$({\mathbb S}^2, d_{{\mathbb S}^2})$
is compact with bounded
diameter.  

The
usual integral current,
$[[{\mathbb S}^2]]$, on the standard round sphere 
$({\mathbb S}^2, d_{{\mathbb S}^2})$ can be defined
using the standard single 
$(r,\theta)$ chart,
\be
\psi:[0,\pi]\times[0,2\pi]\to {\mathbb S}^2,
\ee
with multiplicity one, but this may not be a Lipschitz chart into
$({\mathbb S}^2, d_h)$
due to the singularities at the poles.   

Instead
we define a new parametrization
with a countable collection of
multiplicity one charts
avoiding the poles,
\be
\psi_i: U_i\to {\mathbb S}^2
\textrm{ where }\psi_i(r,\theta)=\psi(r,\theta),
\ee
and where $U_i$ are
defined inductively by
\be
U_1=[\delta_1,\pi-\delta_1]\times [0,2\pi)
\textrm{ and }
\, U_{i+1}=
[\delta_{i+1},\pi-\delta_{i+1}]
\times [0,2\pi)\setminus U_i
\ee
where $\delta_i$ decrease to $0$
so that these are Lipschitz charts
in $({\mathbb S}^2, d_h)$.  Then
\be \label{eq:T_h}
T_h=\sum_{i=1}^\infty \psi_{i\#}[[U_i]]
\ee
is a rectifiable current on $({\mathbb S}^2, d_h)$ because
\be
\sum_{i=1}^\infty {\mathcal{H}}^2(\psi_i(U_i))
\le \vol_{g_h}({\mathbb S}^2)<\infty.
\ee

We see that ``$T_h=[[{\mathbb S}^2]]$ 
viewed
as an integral current on $({\mathbb S}^2,d_{{\mathbb S}^2})$''
in the
sense of Definition~\ref{defn:equal-current}
because
\be
T_h(\pi_0,\pi_1,\pi_2)=[[{\mathbb S}^2]](\pi_0,\pi_1,\pi_2)
\ee
for any tuple in 
$({\mathbb S}^2, d_{{\mathbb S}^2})$,
since missing two points does not affect the integration of tuples.   
So $(\set_{d_h}(T_h), d_h,
T_h)$, is an integral current space. 
Furthermore
\be
\partial T_h = \partial [[{\mathbb S}^2]]=0.
\ee

In order to determine which points lie in $\set_{d_h}(T_h)$ we use (\ref{eq:defn-set})
and the fact that the mass measure 
is the volume measure as
stated above
(\ref{eq:mass=vol}).   It is easy to see that points where $g_h$ is smooth lie in the set, so we
conclude (\ref{lem:ex-set}) holds.
Taking either pole
$p\in \{p_0,p_\pi\}$ and 
$r=r_p$, and
applying l'Hopital's rule
twice, we have
\begin{eqnarray}
\lim_{R\to 0}\tfrac{1}{R^2}\vol_{g_h}(B_{d_h}(p,R))
&=&
\lim_{R\to 0}\tfrac{1}{R^2}\int_0^R 2\pi r(s) \, ds
=
\lim_{s\to 0}\frac{2\pi r(s)}{2R}\\ 
&=&\lim_{s\to 0}\pi r'(s)
=\lim_{s\to 0}\pi (1+(h'(r))^2)^{-1/2}.
\end{eqnarray}
So $p\in \set_{d_0}(T_h)$
iff this limit is $>0$
which happens iff (\ref{eq:h'0})
holds.
\end{proof}

\begin{ex}\label{ex:cone}
If we consider $g_h$
of Lemma~\ref{lem:ex}
defined using
\be \label{h-cone}
h(r)=h_{cone}(r)=1-{|\sin(r)|}
\ee
then $({\mathbb S}^2, d_{cone})$
is a compact
metric space with conical singularities, and
$(\set_{d_{cone}}([[{\mathbb S}^2]], d_h,
[[{\mathbb S}^2]])$ is an integral current space
with
\be
\set([[{\mathbb S}^2]])=
{\mathbb S}^2.
\ee
as depicted in Figure~\ref{fig:cusp-cone}
because
\be
\lim_{r\to 0} |h'(r)|=
\lim_{r\to 0} \cos(r)= 1<\infty.
\ee
\end{ex}

\begin{ex}\label{ex:cusp}
If we consider $g_{cusp}=g_h$
of Lemma~\ref{lem:ex}
defined using
\be \label{eq:h-cusp}
h(r)=h_{cusp}(r)=
1-\sqrt{\sin(r)}
=1-(\sin^2(r))^{1/4}
\ee
then $({\mathbb S}^2, d_{cusp})$
is a compact
metric space with cusp singularities at the poles, 
and the triple
$(\set_{d_{cusp}}([[{\mathbb S}^2]]), d_h,
[[{\mathbb S}^2]])$ is an integral current space
with
\be
\set_{d_{cusp}}([[{\mathbb S}^2]])={\mathbb S}^2\setminus \{p_0,p_\pi\}.
\ee
as depicted in Figure~\ref{fig:cusp-cone}
because
\be
\lim_{r\to 0} |h'(r)|=
\lim_{r\to 0} 
\frac{1}{2}\frac{\cos(r)}{\sqrt{\sin(r)}}=\infty.
\ee
\end{ex}

Theorem~\ref{thm:Riem} is careful with the description
of the SWIF limit because the
space, $(M, d_{\infty}, [[M]])$,
might not be an integral current space, as it might fail to have
$\set([[M]])=M$ as in Example~\ref{ex:lim-cusp}
depicted in Figure~\ref{fig:lim-cusp}.

\begin{ex}\label{ex:lim-cusp}
Let us consider 
the sphere, ${\mathbb S}^2$
with a sequence of
metric tensors,
$g_j=g_{h_j}$
as in Lemma~\ref{lem:ex} 
defined using
\be \label{eq:h-seq}
h_j(r)=1+\tfrac{1}{j}-
\left(\tfrac{1}{j^4}+ \sin^2(r)\right)^{1/4}.
\ee
Note that 
$
h_j\circ r: {\mathbb S}^2 \to [0,1]
$
are smooth functions because
$sin^2(r)$ is a smooth nonnegative function on
a sphere even at the poles where $r=0,\pi$ and $u^{1/4}$
is smooth 
for $u>0$.  In fact 
\be \label{eq:h-diff}
|h_j'(r)|=\tfrac{1}{4}\left(\tfrac{1}{j^4}+ \sin^2(r)\right)^{-3/4}\left(2\sin(r)\cos(r)\right).
\ee
so
\be\label{eq:nabla-inc}
|\nabla(h_j\circ r)|=|h_j'(r)|
\le |h_{j+1}'(r)| =|\nabla(h_{j+1})|\le |h_{cusp}'(r)|
\ee
and $\nabla (h_j\circ r)$ has the same direction as $\nabla r$
for all $j\in \mathbb N$.
Thus we have a monotone increasing sequence of metric tensors as
described in Lemma~\ref{lem:Riem-mono}.  

The radial arclengths,
\be
s_j(x)=\int_0^{r(x)} \sqrt{1+h_j'(r)^2}\, dr
\le \int_0^{r(x)} 1+|h_{cusp}'(r)| dr,
\ee
are uniformly bounded,
so we have a uniform upper bound
on diameter.  

Observe that $h_j \to h_{cusp}$
of Example~\ref{ex:cusp}
smoothly away from the poles.
So $d_j\to d_{cusp}$ pointwise
away from the poles.  Since
the arclength parameters
also converge,
we see that $d_j\to d_{cusp}$
on ${\mathbb S}^2\times{\mathbb S}^2$.   Since $({\mathbb S}^2,d_{{\mathbb S}^2})$ is compact we have all the hypotheses
of Theorem~\ref{thm:Riem}.  This
is an example where the SWIF limit
is a proper subset of the GH limit:  
\be
\set_{d_{cusp}}\left([[{\mathbb S}^2]]\right)\neq {\mathbb S}^2.
\ee
\end{ex}

\begin{figure}[h] 
   \centering
   \includegraphics[width=.8\textwidth]{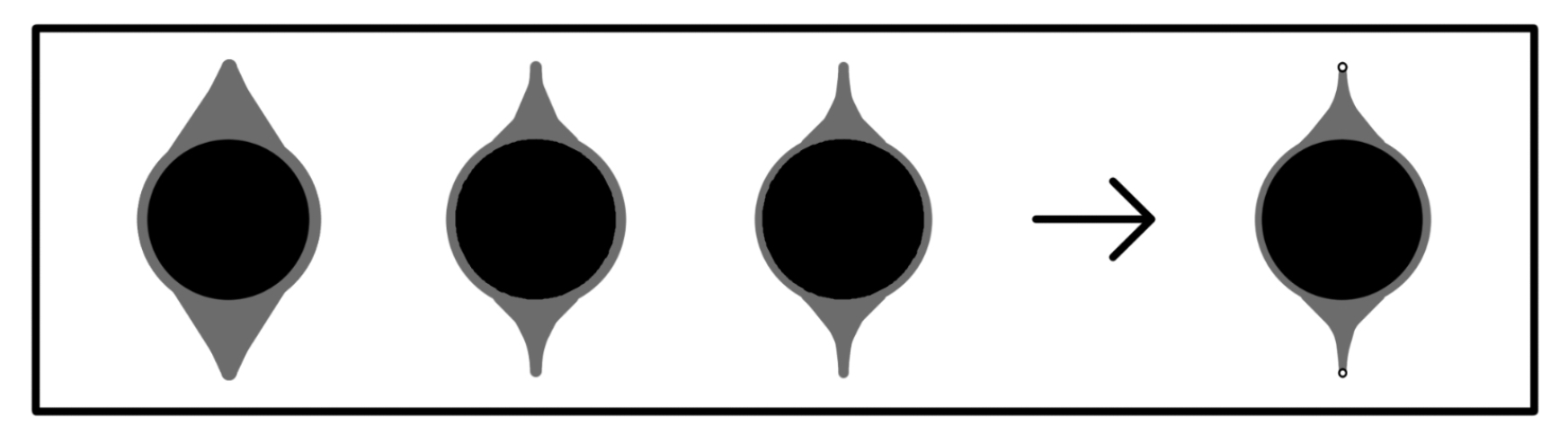} 
\caption{Example~\ref{fig:lim-cusp}
has $d_j\le d_{j+1}$ because
$|h_j'|\le |h_{j+1}'|$ and converges
to Example~\ref{ex:cusp} which has the poles
with cusp singularities removed.}
  \label{fig:lim-cusp} 
\end{figure}

\subsection{Intrinsic Flat Distance}

In \cite{SW-JDG}, Sormani and Wenger define the intrinsic flat distance between two integral current spaces as follows:

\begin{defn}\label{defn:SWIF}
\be
d_{SWIF}((X_a,d_a,S_a),
(X_b,d_b,S_b))
=
\inf\{ d_F^Z\left(f_{a\#}(S_a), f_{b\#}(S_b)\right)\}
\ee
where the infimum is taken over all complete metric spaces, $(Z,d_Z)$, and over all distance preserving maps,
\be
f_a: (X_a,d_a)\to (Z,d_Z)
\textrm{ and } f_b: (X_b,d_b)\to (Z,d_Z).
\ee
Here $d_F^Z$ denotes
the Flat distance
between integral currents
$T_a,T_b$ on $(Z,d_Z)$:
\be
d_F^Z\left(T_a, T_b \right)
=\inf \{\mass(A)+\mass(B):
A+\partial B=T_a-T_b\}
\ee
where the infimum is taken over all integral currents $A,B$ on $(Z,d_Z)$.
\end{defn} 

We say that a sequence
of integral current spaces
converges in the intrinsic flat sense, 
\be\label{eq:defn-SWIFto}
(X_j,d_j,S_j)\SWIFto
(X_\infty,d_\infty,S_\infty),
\ee
if 
\be
d_{SWIF}((X_j,d_j,S_j),
(X_\infty,d_\infty,S_\infty))\to 0.
\ee
In \cite{SW-JDG}, Sormani-Wenger prove this implies there
is a common complete metric space, $(Z,d_Z)$,
and distance preserving maps, $f_j:X_j\to Z$ such that, $T_j=f_{j\#}S_j$
converge weakly to 
``$T_\infty=f_{\infty\#}S_\infty$ as integral currents on $(Z,d_Z)$".  Thus by Ambrosio-Kirchheim semicontinuity
of mass,
\be
\liminf_{j\to \infty}
\mass_{d_j}(S_j) =
\liminf_{j\to \infty}
\mass_{d_Z}(T_j) \ge \mass_{d_Z}(T_\infty)=\mass_{d_\infty}(S_\infty).
\ee

We say the sequence 
of integral current spaces converges in the {\em volume preserving intrinsic flat
sense} if we have
(\ref{eq:defn-SWIFto})
and
\be
\mass_{d_j}(S_j)\to
\mass_{d_\infty}(S_\infty).
\ee
This notion was first studied by Portegies in \cite{Portegies-evalue}.   Additional work was
completed by Jauregui-Lee in \cite{Jauregui-Lee-VF}.
The consequences of $\mathcal{VF}$ convergence
are most recently reviewed by 
Jauregui-Perales-Portegies in \cite{Jauregui-Perales-Portegies}.

\subsection{Intrinsic Flat Convergence Theorem}

We can now state our SWIF convergence theorem:

\begin{thm}\label{thm:SWIF-cmpct} 
Suppose $(X,d_j)$ is a monotone increasing sequence 
of metric spaces satisfying all the hypotheses of Theorem~\ref{thm:GH-cmpct} so that $d_j\to d_\infty$ uniformly and 
\be
(X,d_j)\GHto (X,d_\infty)
\textrm{ which is compact}.
\ee
If there is a current, $T$, 
which is an integral current
structure for $(X,d_j)$,
\be \label{eq:SWIF-setj-T}
\set_{d_j}(T)=X \quad \forall j\in \mathbb{N},
\ee
with uniform upper bounds on total mass,
\be \label{eq:mass-est}
\mass_{d_j}(T)\le V_0
\textrm{ and } \mass_{d_j}(\partial T)\le A_0 \quad \forall j\in \mathbb{N},
\ee
then 
\be \label{eq:SWIF-T}
(X,d_j, T) \Fto (M_\infty,d_\infty,T_\infty).
\ee
where ``$T_\infty=T$ viewed
as an integral current on $(X,d_j)$''
in the sense
defined in Definition~\ref{defn:equal-current}
and
\be \label{eq:SWIF-set-T}
M_\infty=\set_{d_\infty}(T_\infty)\subset X \textrm{ with }
\overline{M}_\infty=X.
\ee
Furthermore,
\be
\mass_{d_\infty}(T_\infty)=
\lim_{j\to\infty}\mass_{d_{j}}(T).
\ee
so we have volume preserving intrinsic flat (${\mathcal{VF}}$)
convergence.
\end{thm}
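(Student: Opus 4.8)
The plan is to work entirely inside the compact common metric space $(Z,d_Z)$ constructed in Proposition~\ref{prop:common-Z}, using the distance preserving maps $\chi_j\colon(X,d_j)\to Z$ and $\chi_\infty\colon(X,d_\infty)\to Z$, and to extract the limit current there. First I would push forward the integral current structure: for each $j$ set $\mathbf{T}_j=\chi_{j\#}T$, an integral current on the compact metric space $(Z,d_Z)$. Since $\chi_j$ is distance preserving, Lemma~\ref{lem:current} (applied to the distance preserving map) gives $\mass_{d_Z}(\mathbf{T}_j)=\mass_{d_j}(T)\le V_0$ and $\mass_{d_Z}(\partial\mathbf{T}_j)=\mass_{d_j}(\partial T)\le A_0$. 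By the Ambrosio--Kirchheim compactness theorem (Theorem~\ref{thm-AKcompact}), a subsequence $\mathbf{T}_{j_k}$ converges weakly to an integral current $\mathbf{T}_\infty$ on $(Z,d_Z)$ with lower semicontinuous mass. I would then show $\spt_{d_Z}(\mathbf{T}_\infty)\subset Z_0=\chi_\infty(X)$: this is because $\mathbf{T}_{j_k}$ is supported in $\chi_{j_k}(Z_{j_k})\subset W_{j_k}$, and by \eqref{eq:dZchihj2} the Hausdorff distance $d_H^Z(\chi_{j_k}(X),Z_0)\le h_{j_k}\to 0$, so the supports collapse onto $Z_0$; weak limits of currents supported in shrinking neighborhoods of a closed set are supported in that set. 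Since $\chi_\infty$ is an isometry onto $Z_0$, pulling $\mathbf{T}_\infty$ back along $\chi_\infty^{-1}$ produces an integral current $T_\infty$ on $(X,d_\infty)$, and $M_\infty:=\set_{d_\infty}(T_\infty)$ together with $d_\infty|_{M_\infty}$ and $T_\infty$ is an integral current space by construction.

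The key structural identity is Proposition~\ref{prop:TaTb} (promised in the introduction), which I expect estimates $d_F^{Z_{j,\infty}}(\chi_{j\#}T,\chi_{\infty\#}T_\infty)$ or a comparable flat distance in terms of $h_j$ and the masses; combined with $\mathbf{T}_{j_k}\to\mathbf{T}_\infty$ weakly and the uniform mass bounds, the Wenger-type argument (weak convergence plus uniformly bounded mass and boundary mass on a compact space implies flat convergence) gives $d_F^Z(\mathbf{T}_{j_k},\mathbf{T}_\infty)\to 0$. Since the infimum defining $d_{SWIF}$ is over all $Z$ and distance preserving maps, this particular choice bounds $d_{SWIF}((X,d_{j_k},T),(M_\infty,d_\infty,T_\infty))\to 0$ along the subsequence. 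To upgrade to convergence of the whole sequence I would invoke monotonicity exactly as in the proof of Proposition~\ref{prop:unif-cmpct}: the estimates in Proposition~\ref{prop:TaTb} or a direct filling argument in $Z_{j,\infty}$ give a bound on $d_F^{Z_{j,\infty}}$ that is monotone in $j$ (shrinking with $h_j$), so once the subsequence converges the full sequence does.

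Next I would identify $T_\infty$ with $T$ in the sense of Definition~\ref{defn:equal-current}. For any tuple $(\pi_0,\dots,\pi_m)$ of $d_j$-Lipschitz functions on $X$, apply Lemma~\ref{lem:Lip-Z} to extend each $\pi_\ell$ to a $d_Z$-Lipschitz function $\tilde\pi_\ell$ on $W_j$ with $\tilde\pi_\ell\circ\chi_\infty=\pi_\ell$; then for all $k$ large enough that $j_k\ge j$ one has $W_{j_k}\subset W_j$ and $\mathbf{T}_{j_k}(\tilde\pi_0,\dots,\tilde\pi_m)=T(\pi_0\circ F_j\circ\chi_{j_k},\dots)=T(\pi_0,\dots,\pi_m)$ by the construction of $F_j$, while the left side converges to $\mathbf{T}_\infty(\tilde\pi_0,\dots,\tilde\pi_m)=T_\infty(\pi_0,\dots,\pi_m)$; hence $T_\infty(\pi_0,\dots,\pi_m)=T(\pi_0,\dots,\pi_m)$ for every $d_j$-Lipschitz tuple, which is exactly the assertion. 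From this identification and $\set_{d_j}(T)=X$ I would deduce $\overline{M}_\infty=X$: indeed $\set_{d_j}(T)\subset\spt_{d_j}(T)$ and the support of a current agrees with the closure of its set; combined with $\spt_{d_\infty}(T_\infty)=\overline{\set_{d_\infty}(T_\infty)}=\overline{M}_\infty$ and a density argument comparing $\spt_{d_j}(T)$ (which is all of $X$) with $\spt_{d_\infty}(T_\infty)$ via uniform convergence $d_j\to d_\infty$, one gets $\overline{M}_\infty=X$. Finally, for the volume-preserving statement, lower semicontinuity \eqref{eq:semi} gives $\mass_{d_\infty}(T_\infty)=\mass_{d_Z}(\mathbf{T}_\infty)\le\liminf_k\mass_{d_Z}(\mathbf{T}_{j_k})=\liminf_k\mass_{d_{j_k}}(T)$; the reverse inequality $\mass_{d_j}(T)\le\mass_{d_\infty}(T_\infty)$ follows from Lemma~\ref{lem:current} applied with $d_a=d_j\le d_\infty=d_b$ to $S=T_\infty$ (legitimate precisely because $T_\infty$ is an honest integral current on $(X,d_\infty)$, unlike $T$), noting $\mass_{d_j}(T_\infty)=\mass_{d_j}(T)$ by the identification; monotonicity of $j\mapsto\mass_{d_j}(T)$ then forces $\mass_{d_j}(T)\uparrow\mass_{d_\infty}(T_\infty)$, giving $\mathcal{VF}$ convergence.

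The main obstacle I anticipate is the direction-of-inequality subtlety around Lemma~\ref{lem:current}: since $d_\infty\ge d_j$, one cannot directly view $T$ (an integral current on $(X,d_j)$) as a current on $(X,d_\infty)$, and the whole point is to \emph{produce} $T_\infty$ as a genuinely new integral current on $(X,d_\infty)$ via the Ambrosio--Kirchheim limit in $Z$ and then check it equals $T$ on $d_j$-tuples. Making the support-collapse argument rigorous (weak limits concentrate on $Z_0$) and correctly handling that the extension $\tilde\pi$ of Lemma~\ref{lem:Lip-Z} is only defined on $W_j$, not all of $Z$ (Remark~\ref{rmrk:Lip-Z}), so that the pairing $\mathbf{T}_{j_k}(\tilde\pi_0,\dots,\tilde\pi_m)$ must be interpreted carefully once $W_{j_k}\subset W_j$ for $j_k\ge j$, is where the care is needed; the rest is a bookkeeping exercise combining the already-established Propositions and Lemmas.
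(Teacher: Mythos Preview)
Your proposal is correct and follows essentially the same approach as the paper: push forward to the common compact $(Z,d_Z)$ of Proposition~\ref{prop:common-Z}, apply Ambrosio--Kirchheim compactness, show the weak limit is supported on $Z_0$, pull back to get $T_\infty$, identify $T_\infty=T$ on $d_j$-tuples via the $W_j$-extensions of Lemma~\ref{lem:Lip-Z}, and obtain mass convergence from lower semicontinuity plus Lemma~\ref{lem:current}. The only notable difference is that the paper uses Proposition~\ref{prop:TaTb} to get an \emph{explicit} bound $d_{SWIF}\le 2^{(m+1)/2}h_j(V_0+A_0)$ valid for every $j$, so SWIF convergence of the full sequence follows immediately without any separate subsequence-to-full-sequence upgrade; your Wenger-type alternative would also work but is less self-contained.
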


Recall that in Example~\ref{ex:lim-cusp} we saw that the SWIF limit, $M_\infty$,
might be a proper subset of the
Gromov-Hausdorff limit, $X$.

\subsection{The Proof of Riemannian Theorem~\ref{thm:Riem}}
\label{sect:Riem}

Before we prove  Theorem~\ref{thm:SWIF-cmpct}, we show in this subsection how it can be applied to
prove Theorem~\ref{thm:Riem}:

\begin{proof}
Note that the hypotheses of Theorem~\ref{thm:Riem} imply the hypotheses of
Theorem~\ref{thm:GH-cmpct}
and Theorem~\ref{thm:SWIF-cmpct} as follows:
\begin{itemize}
\item
Monotonicity of metric tensors in (\ref{eq:mono-g}) implies
monotonicity of distances as in Definition~\ref{defn:mono}
by Lemma~\ref{lem:Riem-ineq}.
\item
Adding the diameter bound in (\ref{eq:diam}) implies the
pointwise convergence of $d_j\to d_\infty$ in Lemma~\ref{lem:diam} and the
diameter hypothesis of Theorem~\ref{thm:GH-cmpct}.
\item
The assumption that $(M, d_\infty)$ is compact in Theorem~\ref{thm:Riem} 
implies the compactness assumption in Theorem~\ref{thm:GH-cmpct},
so that 
we have uniform convergence as in Proposition~\ref{prop:unif-cmpct} and a common
metric space $Z_j$ for $(M,d_j)$ and $(M,d_\infty)$ as in 
Proposition~\ref{prop:common-Z}
and 
$(M, d_j) \GHto (M, d_\infty)$
as in Theorem~\ref{thm:GH-cmpct}.
\item
The hypothesis that $(M,g_j)$ are smooth
oriented Riemannian manifolds
in Theorem~\ref{thm:Riem}
implies $(M,d_{g_j},[[M]]])$ are
integral current spaces
with $T=[[M]]$ by (\ref{eq:set=M}).
\item
The volume bounds in (\ref{eq:vol}) imply the mass
bounds in (\ref{eq:mass-est}) by (\ref{eq:mass=vol}).
\end{itemize}
Thus we may apply the Theorem~\ref{thm:SWIF-cmpct} and (\ref{eq:mass=vol})
to conclude the
volume preserving intrinsic
flat convergence in
(\ref{eq:VF-Riem})
to a limit space satisfying
(\ref{eq:set-Riem}).
\end{proof}

\subsection{Finding the Current Structure of the SWIF Limit for Theorem~\ref{thm:SWIF-cmpct}}

 In \cite{SW-JDG}, Sormani-Wenger 
 proved that when one has a sequence
 of integral current spaces which converge in the Gromov-Hausdorff sense and have uniformly bounded
 total mass as in (\ref{eq:mass-est}), then SWIF limit of the integral current spaces is a subset of the Gromov-Hausdorff limit.
 For Theorem~\ref{thm:SWIF-cmpct} we wish to show the closure
 of the SWIF limit is the GH limit, so we need stronger control on the SWIF limit's current structure.
To achieve this,
 we will repeat the steps of the
 argument in \cite{SW-JDG} using the special properties of the common metric space, $(Z,d_Z)$,
 that we constructed in
 Proposition~\ref{prop:common-Z}.   See Figure~\ref{fig:zeta}.

 \begin{figure}[h] 
   \centering \quad \includegraphics[width=\textwidth]{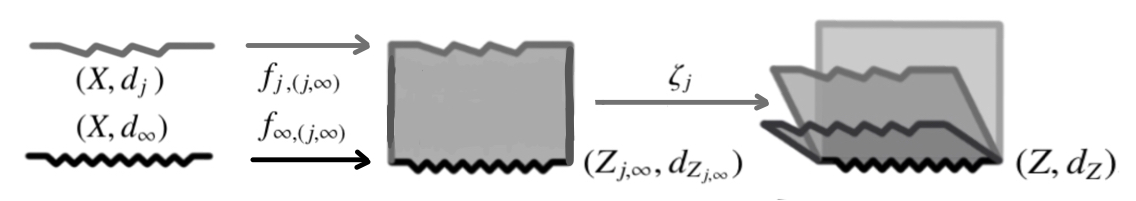} 
\caption{
The distance preserving maps
from Proposition~\ref{prop:common-Z}.
}\label{fig:zeta}
\end{figure}

\begin{prop}\label{prop:Tinfty}
Under the hypotheses of Theorem~\ref{thm:SWIF-cmpct},
we have the metric space
$(Z,d_Z)$ as constructed 
in Proposition~\ref{prop:common-Z} 
depicted in Figure~\ref{fig:zeta}
with distance preserving maps,
\begin{eqnarray}\label{eq:chi-to-push}
\chi_j\,&=&\zeta_j\circ f_{j\, ,(j,\infty)}: \,
(X,d_j\,)\to 
\,(Z_{j,\infty}, d_{Z_{j,\infty}}) \to 
(Z,d_Z),\\
\chi_\infty&=&\zeta_j\circ f_{\infty,(j,\infty)}: 
(X,d_\infty)\to (Z_{j,\infty},d_{Z_{j,\infty}})\to (Z,d_Z),
\end{eqnarray}
such that the pushforwards
\be\label{eq:push-Tj}
T_j=\chi_{j\#}(T)=\zeta_{j\#}(f_{j\#}(T))
\ee
are integral currents in $Z$ whose
supports are
\be\label{eq:spt-Tj}
\spt(T_j)=\zeta_j(f_j(X))= \zeta_j(X\times\{h_j\})
\subset \zeta_j(Z_{j,\infty})\subset Z
\ee
and have
\be\label{eq:mass-Tj}
\mass_{d_Z}(T_j)=\mass_{d_j}(T)\le V_0
\textrm{ and }
\mass_{d_Z}(\partial T_j)=\mass_{d_j}(\partial T)\le A_0.
\ee
Furthermore, 
there is a subsequence $T_{j_k}$
which converges weakly as currents in $(Z,d_Z)$
to an integral current, $\hat{T}_\infty$,
whose support lies on
the central spine in $Z$,
\be
\spt(\hat{T}_\infty)=Z_0=\chi_\infty(X)
=\zeta_j(X\times\{0\})
\subset \zeta_j(Z_{j,\infty})\subset Z.
\ee
\textcolor{black}{Taking $T_\infty=\chi_{\infty\#}^{-1}\hat{T}_\infty$ we have
``$T_\infty=T$
viewed
as an integral current on $(X,d_j)$''}
as in Definition~\ref{defn:equal-current},
and
\be \label{eq:mass-pres}
\mass_{d_\infty}(\textcolor{black}{T_\infty})=
\lim_{k\to\infty}\mass_{d_{j_k}}(T).
\ee
\end{prop}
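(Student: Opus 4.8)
The plan is to build the common metric space $(Z,d_Z)$ and the pushforward currents $T_j = \chi_{j\#}(T)$ as described, then apply the Ambrosio--Kirchheim compactness theorem to extract a weak subsequential limit, and finally identify that limit as lying on the spine $Z_0$ using the convergence $h_j \to 0$. First I would invoke Proposition~\ref{prop:common-Z} to produce $(Z,d_Z)$, which is compact, together with the distance preserving maps $\zeta_j$, $\chi_j = \zeta_j \circ f_{j,(j,\infty)}$ and $\chi_\infty = \zeta_j \circ f_{\infty,(j,\infty)}$. Since these are distance preserving, pushforward conserves mass, boundary, and boundary mass (the remark following Lemma~\ref{lem:current}), so $T_j = \chi_{j\#}(T)$ is an integral current in $Z$ with $\mass_{d_Z}(T_j) = \mass_{d_j}(T) \le V_0$ and $\mass_{d_Z}(\partial T_j) = \mass_{d_j}(\partial T) \le A_0$. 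The support claim \eqref{eq:spt-Tj} follows because $\chi_j$ maps $X$ isometrically onto $\zeta_j(X\times\{h_j\})$ and $\spt$ of a pushforward under an isometric embedding is the image of the support, which is all of $X$ by the hypothesis $\set_{d_j}(T)=X$ and \eqref{eq:spt-set}.

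Next I would apply Theorem~\ref{thm-AKcompact} (Ambrosio--Kirchheim compactness) to the sequence $T_j$ in the compact space $(Z,d_Z)$: the uniform mass bounds give a subsequence $T_{j_k}$ converging weakly to an integral current $\hat{T}_\infty$ on $(Z,d_Z)$, with lower semicontinuity of the mass measure on open sets. To locate $\spt(\hat{T}_\infty)$, I would argue that $\spt(\hat{T}_\infty) \subset Z_0$: by \eqref{eq:dZchihj2} the images $\chi_{j}(X) = \spt(T_j)$ lie within Hausdorff distance $h_{j} \to 0$ of $Z_0$, so for any point $z \in Z \setminus Z_0$ there is a ball $B_{d_Z}(z,r)$ disjoint from $\spt(T_{j_k})$ for all large $k$, hence $\|T_{j_k}\|(B_{d_Z}(z,r)) = 0$ eventually, and semicontinuity forces $\|\hat{T}_\infty\|(B_{d_Z}(z,r)) = 0$, so $z \notin \spt(\hat{T}_\infty)$. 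For the reverse inclusion $Z_0 \subset \spt(\hat{T}_\infty)$, I would show that $\hat{T}_\infty$ is nontrivial with full support on $Z_0$ by transferring the problem back to $X$: set $T_\infty = \chi_{\infty\#}^{-1}\hat{T}_\infty$, an integral current on $(X,d_\infty)$ since $\chi_\infty$ is an isometry onto $Z_0 \supset \spt(\hat{T}_\infty)$.

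To check that ``$T_\infty = T$ viewed as an integral current on $(X,d_j)$'' in the sense of Definition~\ref{defn:equal-current}, I would verify that for any Lipschitz tuple $(\pi_0,\dots,\pi_m)$ with respect to $d_j$ on $X$ we have $T_\infty(\pi_0,\dots,\pi_m) = T(\pi_0,\dots,\pi_m)$. The idea is to extend each $\pi_i$ to $\tilde{\pi}_i = \pi_i \circ F_j$ on $W_j \subset Z$ using Lemma~\ref{lem:W-to-X} and Lemma~\ref{lem:Lip-Z}, so that $\tilde{\pi}_i$ is $\Lip$-comparable and agrees with $\pi_i$ on $\chi_\infty(X) = Z_0$; since $\spt(T_k) = \chi_k(X) \subset W_j$ for all $k \ge j$, and $\tilde{\pi}_i \circ \chi_k = \pi_i$ on $X$ (because $F_j \circ \chi_k = \mathrm{id}_X$ up to the projection), one gets $T_k(\tilde{\pi}_0,\dots,\tilde{\pi}_m) = T(\pi_0,\dots,\pi_m)$ for every $k \ge j$. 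Now one must be careful that $\tilde{\pi}_i$ is defined only on $W_j$, not all of $Z$ (Remark~\ref{rmrk:Lip-Z}); I would handle this by using that $T_k$ for $k\ge j$ and $\hat T_\infty$ are all supported in the closed set $\overline{W_j}$, restricting attention there, and passing to the weak limit: $\hat{T}_\infty(\tilde{\pi}_0,\dots,\tilde{\pi}_m) = \lim_k T_k(\tilde{\pi}_0,\dots,\tilde{\pi}_m) = T(\pi_0,\dots,\pi_m)$, while also $\hat{T}_\infty(\tilde\pi_0,\dots,\tilde\pi_m) = T_\infty(\pi_0,\dots,\pi_m)$ since $\tilde\pi_i$ restricted to $Z_0$ pulls back to $\pi_i$ under $\chi_\infty$. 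This identification also yields $Z_0 \subset \spt(\hat T_\infty)$, because $T_\infty$ agrees with $T$ on $d_j$-tuples and $\set_{d_j}(T) = X$ forces $T_\infty \ne 0$ with the right support; combined with the earlier inclusion this gives $\spt(\hat{T}_\infty) = Z_0 = \chi_\infty(X)$ and hence $\overline{M}_\infty = \overline{\set_{d_\infty}(T_\infty)} = \spt_{d_\infty}(T_\infty) = X$. Finally, mass preservation \eqref{eq:mass-pres}: semicontinuity gives $\mass(\hat T_\infty) \le \liminf_k \mass_{d_{j_k}}(T)$, while monotonicity $d_{j} \le d_\infty$ and Lemma~\ref{lem:current} give $\mass_{d_{j_k}}(T) \le \mass_{d_\infty}(T_\infty) = \mass(\hat T_\infty)$ once one knows $T$ and $T_\infty$ share a parametrization; since $\mass_{d_{j_k}}(T)$ is monotone increasing in $k$, the squeeze forces $\mass_{d_{j_k}}(T) \to \mass_{d_\infty}(T_\infty)$.

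\medskip

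The main obstacle I expect is the identification step: reconciling the fact that the natural extensions $\tilde\pi_i = \pi_i \circ F_j$ are Lipschitz only on $W_j$ (not on all of $Z$, per Remark~\ref{rmrk:Lip-Z}) with the need to evaluate both the approximating currents $T_k$ and the weak limit $\hat T_\infty$ on genuine Lipschitz tuples of $(Z,d_Z)$. Resolving this requires carefully exploiting that all the relevant currents are supported in $\overline{W_j}$ so that the value of a current on a tuple depends only on the tuple's behavior near that support, together with an argument that one may harmlessly modify or extend $\tilde\pi_i$ outside a neighborhood of $\overline{W_j}$ without changing any of the pairings --- or alternatively, restricting currents to $W_j$ via the Ambrosio--Kirchheim restriction operation and working there throughout. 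A secondary subtlety is ensuring $T_\infty = \chi_{\infty\#}^{-1}\hat T_\infty$ is well-defined as an integral current on $(X,d_\infty)$, which needs $\spt(\hat T_\infty) \subset Z_0$ established \emph{before} pulling back; that is why the support inclusion must be proven first from the Hausdorff convergence $h_{j_k} \to 0$ and semicontinuity, independently of the tuple identification.
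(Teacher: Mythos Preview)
Your proposal is correct and follows essentially the same route as the paper: build $(Z,d_Z)$ via Proposition~\ref{prop:common-Z}, push forward to $T_j$, apply Ambrosio--Kirchheim compactness, show $\spt(\hat T_\infty)\subset Z_0$ by semicontinuity, pull back to $T_\infty$, and use the $W_j$-extension of Lemmas~\ref{lem:W-to-X}--\ref{lem:Lip-Z} to identify $T_\infty$ with $T$ on $d_j$-tuples before squeezing the mass. The one spot where your sketch is thinner than the paper is the reverse inclusion $Z_0\subset\spt(\hat T_\infty)$: your phrase ``forces $T_\infty\ne0$ with the right support'' hides a ball-comparison that the paper makes explicit, namely combining $\|T_\infty\|_{d_\infty}\ge\|T_\infty\|_{d_j}=\|T\|_{d_j}$ from Lemma~\ref{lem:current} with the \emph{uniform} convergence $|d_j-d_\infty|<\epsilon_j$ to obtain $B_{d_j}(x,r/2)\subset B_{d_\infty}(x,r)$ for $\epsilon_j<r/2$, so that $\|T_\infty\|_{d_\infty}(B_{d_\infty}(x,r))\ge\|T\|_{d_j}(B_{d_j}(x,r/2))>0$.
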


In the next section we will apply this proposition to prove SWIF convergence holds without
needing a subsequence.   
 
\begin{proof}
Since the maps in (\ref{eq:chi-to-push})
are distance preserving,
the pushforward integral currents $T_j$ on $(Z,d_Z)$ defined as in (\ref{eq:push-Tj}),  satisfy
(\ref{eq:mass-Tj}) and (\ref{eq:spt-Tj}) following the definitions of
pushforward and mass (cf. \cite{SW-JDG}).

Since $(Z,d_Z)$ is compact by Proposition ~\ref{prop:common-Z}, we can apply the Ambrosio-Kirchheim Compactness Theorem, Theorem \ref{thm-AKcompact}, to
conclude that 
there is a subsequence $T_{j_k}$
which
converges weakly as currents in $(Z,d_Z)$
to an integral current, $\textcolor{black}{\hat{T}_\infty}$.

\vspace{-.2cm} 
 \begin{figure}[h] 
   \centering \quad \includegraphics[width=.6\textwidth]{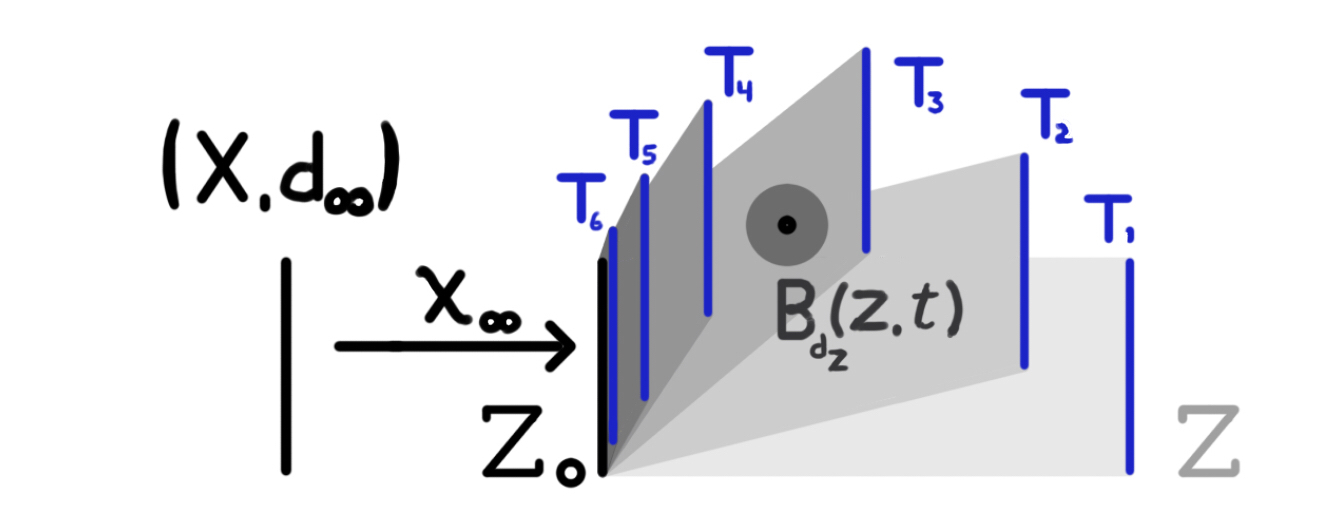} 
\caption{Here we see integral currents $T_j$ in $Z$ and $B_{d_Z}(z,t)$ of the proof that $\spt(\textcolor{black}{\hat{T}_\infty})\subset Z_0=\chi_\infty(X)\subset Z$ as claimed in (\ref{eq:spt-claim}).}
\label{fig:SWIF-Tj}
\end{figure}

First we claim 
\be \label{eq:spt-claim}
\spt(\textcolor{black}{\hat{T}_\infty})\subset Z_0=
\chi_\infty(X)=\zeta_j(X\times\{0\}\subset Z_{j,\infty})\subset Z.
\ee
Take any $z\in Z\setminus Z_0$
as in Figure~\ref{fig:SWIF-Tj}.
Then there exists $j_z\in {\mathbb N}$
such that $z=\zeta_{j_z}(f_{j_z}(x,t))$
where $t>0$. 
Then by the definition of
$d_Z$ in Proposition~\ref{prop:common-Z},
the ball:
\be
B_{d_Z}(z,t)\subset 
\zeta_{j_z}(f_{j_z}(X\times (0,h_{j_z}])))\subset \zeta_{j_z}(Z_{j_z})\setminus Z_0.
\ee
In particular 
\be
B_{d_Z}(z,t) \cap \zeta_{j}(f_j(X\times\{h_j\}))=\emptyset
\ee
for all $j>j_z$.
By the lower semicontinuity of mass
under weak convergence as in (\ref{eq:semi}) we have 
\be\label{eq:pf-semi}
||\textcolor{black}{\hat{T}_\infty}||(B_{d_Z}(z,t))
\le \liminf_{j\to\infty}
||T_j||(B_{d_Z}(z,t))=0
\ee
so $z\notin \spt(\textcolor{black}{\hat{T}_\infty})$.
Thus $\spt(\textcolor{black}{\hat{T}_\infty}) \subset Z_0$.   

Since $\chi_\infty:(X,d_\infty)\to (Z_0,d_Z)$ is an isometry, we can define:
\be\label{eq:S-is-defn}
\textcolor{black}{T_\infty}=\chi^{-1}_{\infty\#}\textcolor{black}{\hat{T}_\infty}
\ee
as an integral current on $(X,d_\infty)$.

We claim that for each $j\in {\mathbb N}$,  ``$\textcolor{black}{T_\infty}=T$ viewed
as an integral current on $(X,d_j)$'' in the sense of Definition~\ref{defn:equal-current}.  Given
any tuple, $(\pi_0, \pi_1,...,\pi_m)$ 
of Lipschitz functions on $(X, d_j)$,
we need only show 
\be\label{eq:need-T=S}
T(\pi_0, \pi_1,...,\pi_m)=\textcolor{black}{T_\infty}(\pi_0, \pi_1,...,\pi_m).
\ee 
First note that by Lemma~\ref{lem:Lip} these $\pi_i$
are also tuples of Lipschitz functions on $(X, d_\infty)$, so 
$\textcolor{black}{T_\infty}(\pi_0,...,\pi_m)$ is well 
defined.   

 \begin{figure}[h] 
   \centering \quad \includegraphics[width=.5\textwidth]{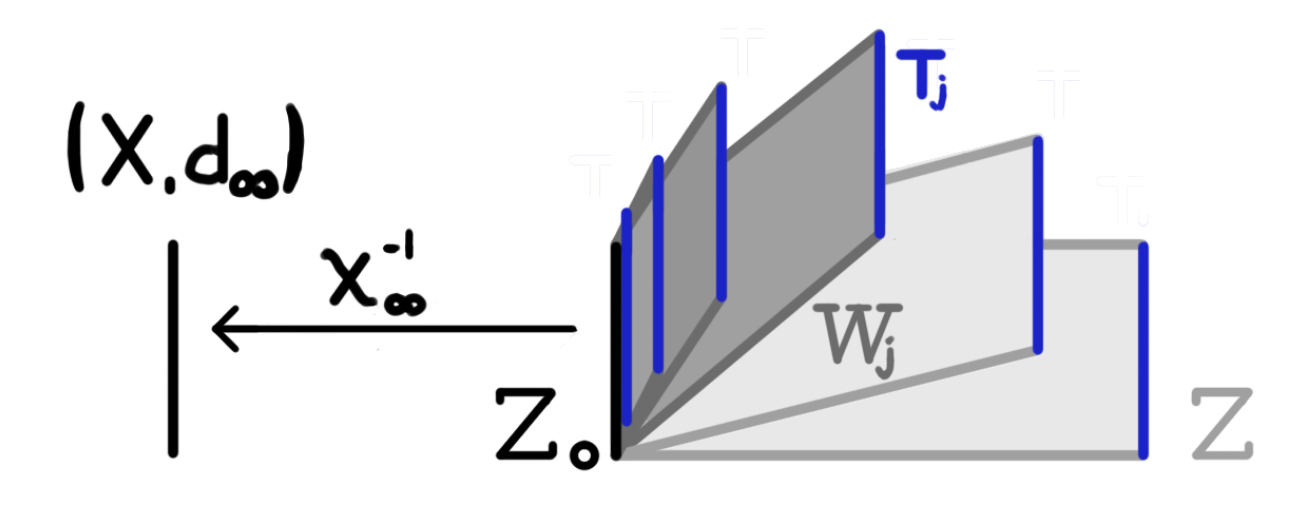} 
\caption{Here we see $W_j\subset Z$ of Lemma~\ref{lem:W-to-X} in dark gray, the rest of $Z$ is lighter, and
$Z_0\subset W_j\subset Z$ is in black with the
isometry, $\chi^{-1}_\infty$, to $(X,d_\infty)$.
}
\label{fig:Wj}
\end{figure}

Recall the set $W_j\subset Z$ from
Lemma~\ref{lem:W-to-X}, Lemma~\ref{lem:Lip-Z}, and Remark~\ref{rmrk:Lip-Z} depicted in Figure~\ref{fig:Wj}.
By Lemma~\ref{lem:Lip-Z}, we
define a tuple of Lipschitz functions,
$(\tilde{\pi}_0, \tilde{\pi}_1,...,\tilde{\pi}_m)$,
on the subset, $(W_j,d_Z)$, inside
$(Z,d_Z)$, 
such that
\be\label{eq:tuple-tilde}
\tilde{\pi}_i(\zeta_{j_k}(x,t))=\pi_i(x)
\qquad \forall x\in X,\,\, t\in [0,h_{j_k}],\,\,j_k\ge j.
\ee
By (\ref{eq:S-is-defn}) and
the fact that
\be 
\tilde{\pi}_i(\zeta_{j_k}(x,t))=\pi_i(x)=
\tilde{\pi}_i(\chi_\infty(x))) 
\qquad \forall x\in X,\,\, t\in [0,h_{j_k}],\,\,j_k\ge j,
\ee
we have
by definition of $\textcolor{black}{T_\infty}$
and then the definition of pushforward that
\begin{eqnarray}
\textcolor{black}{T_\infty}(\pi_0, \pi_1,...,\pi_m)
&=&
(\chi^{-1}_{\infty\#}\textcolor{black}{\hat{T}_\infty})
(\tilde{\pi}_0\circ \chi_\infty, \tilde{\pi}_1\circ \chi_\infty,...,\tilde{\pi}_m\circ \chi_\infty)\\
&=&
\textcolor{black}{\hat{T}_\infty}(\tilde{\pi}_0, \tilde{\pi}_1,...,\tilde{\pi}_m).
\end{eqnarray}
Note that for any  $j_k\ge j$ we have
\be
\spt(T_{j_k})\subset \zeta_{j_k}(Z_{j_k}) \subset W_j\subset Z.
\ee
So $T_{j_k}$
converge weakly as currents to $\textcolor{black}{\hat{T}_\infty}$ in $(W_j,d_Z)$. 
This means, for any Lipschitz tuple, $\omega$, on $(W_j,d_Z)$, we have
\be \label{eq:pf-weak}
T_{j_k}(\omega)\to \textcolor{black}{\hat{T}_\infty}(\omega).
\ee
Applying this to $\omega=(\tilde{\pi}_0, \tilde{\pi}_1,...,\tilde{\pi}_m)$,
we have
\be\label{eq:S-is-limjk}
\textcolor{black}{T_\infty}(\pi_0, \pi_1,...,\pi_m)=
\lim_{k\to \infty} T_{j_k}(\tilde{\pi}_0, \tilde{\pi}_1,...,\tilde{\pi}_m).
\ee
On the other hand, by 
(\ref{eq:tuple-tilde}),
we have,
\be
\tilde{\pi}_i(\zeta_{j_k}(x,t))=\pi_i(x)=
\tilde{\pi}_i(\zeta_{j_k}(f_{j_k}(x))))
\quad \forall x\in X,\,\, t\in [0,h_{j_k}],\,\,j_k\ge j.
\ee
Thus
$T_{j_k}=\zeta_{j_k\#}f_{j_k\#}T$
and
we have
\begin{eqnarray}
T_{j_k}(\tilde{\pi}_0,...,\tilde{\pi}_m)
&=& (\zeta_{j_k\#}f_{j_k\#}T)
(\tilde{\pi}_0\circ \zeta_{j_k}\circ f_{j_k},...,\tilde{\pi}_m\circ \zeta_{j_k}\circ f_{j_k})\\
&=&
T(\pi_0,...,\pi_m).
\end{eqnarray}  
Substituting this
into (\ref{eq:S-is-limjk}), and taking the
limit $j_k\to \infty$, we reach our claim
in (\ref{eq:need-T=S}).
We conclude that ``$\textcolor{black}{T_\infty}=T$ viewed
as an integral current on $(X,d_j)$'' in the sense of
Definition~\ref{defn:equal-current}.

Now we claim $\spt_{d_Z}(\textcolor{black}{\hat{T}_\infty})=Z_0$. Since we know $\textcolor{black}{\hat{T}_\infty}=\chi_{\infty\#}\textcolor{black}{T_\infty}$, this is the
same as showing $\spt_{d_{\infty}}(\textcolor{black}{T_\infty})=X$. 
Given any $x\in X$ and any $r>0$
we have
\be
||\textcolor{black}{T_\infty}||_{d_\infty}(B_{d_\infty}(x,r))
\ge 
||\textcolor{black}{T_\infty}||_{d_j}(B_{d_\infty}(x,r))
\ee
by Lemma~\ref{lem:current}.
By our hypotheses,
we have uniform convergence of the distance functions, $|d_j-d_\infty|<\epsilon_j\to 0$,
so for all $j$ such that $\epsilon_j<r/2$
\be
B_{d_\infty}(x,r) \supset B_{d_j}(x,r/2).
\ee
Combining this with the above,
and then the fact that ``$\textcolor{black}{T_\infty}=T$ as integral currents on $(X,d_j)$'', we have
\be
||\textcolor{black}{T_\infty}||_{d_\infty}(B_{d_\infty}(x,r))\ge
||\textcolor{black}{T_\infty}||_{d_j}(B_{d_j}(x,r/2))=
||T||_{d_j}(B_{d_j}(x,r/2))>0.
\ee
Thus $\spt_{d_{\infty}}(\textcolor{black}{T_\infty})=X$
and $\spt_{d_Z}(\textcolor{black}{\hat{T}_\infty})=Z_0$. 

Finally, we claim the masses converge
as in (\ref{eq:mass-pres}).
By Ambrosio-Kirchheim's semicontinuity of mass
combined with the fact that
$\zeta_\infty$ is distance preserving
we have
\be\label{eq:squeeze-above}
\mass_{d_\infty}(\textcolor{black}{T_\infty})=\mass_{d_Z}(\textcolor{black}{\hat{T}_\infty})
\le \liminf_{k\to\infty}
\mass_{d_Z}(T_{j_k})=
\liminf_{k\to\infty}\mass_{d_{j_k}}(T).
\ee
Applying Lemma~\ref{lem:current}, 
and then the fact that ``$\textcolor{black}{T_\infty}=T$ as integral currents on $(X,d_{j_k})$'',
we have
\be\label{eq:squeeze-below}
\mass_{d_\infty}(\textcolor{black}{T_\infty})\ge 
\mass_{d_{j_k}}(\textcolor{black}{T_\infty})=\mass_{d_{j_k}}(T).
\ee
Combining (\ref{eq:squeeze-above}) and
(\ref{eq:squeeze-below}),
we have the claimed mass convergence for the subsequence
in (\ref{eq:mass-pres}).
\end{proof}


\subsection{Proving SWIF Convergence}
\label{sect:pf-SWIF}

In this section we complete the proof of Theorem~\ref{thm:SWIF-cmpct}.  Recall Definition~\ref{defn:SWIF} of the SWIF distance.  Our goal is to provide a constructive proof that
\be
d_{SWIF}((X,d_j,T),(M_\infty,d_\infty,T_\infty))\to 0
\textrm{ as }j\to \infty
\ee
where $(M_\infty,d_\infty,T_\infty)$
is the integral current space defined using a subsequence in Proposition~\ref{prop:Tinfty}.

To complete the proof of Theorem~\ref{thm:SWIF-cmpct}
we will explicitly estimate the SWIF distances.  First we prove a more general proposition that provides a more general estimate on the SWIF distance between a pair of integral current spaces:

\begin{prop}\label{prop:TaTb}
Given two metric spaces,
$(X,d_a)$ and $(X,d_b)$,
and an $\epsilon>0$ such that
\be \label{eq:ab-Zab-2}
d_b(x,y)-\epsilon \le d_a(x,y) \le d_b(x,y)\quad \forall x,y\in X.
\ee
If $T_a$ is an integral current on $(X,d_a)$ and $T_b$ is an integral current on $(X,d_b)$ 
both of dimension, $m$, and ``$T_a=T_b$ as integral currents on $(X,d_a)$'', then
\be
d_{SWIF}((M_a,d_a,T_a),
(M_b, d_b, T_b)
\le h 2^{\frac{m+1}{2}}(
\mass_{d_b}(T_b)
+  \mass_{d_b}(\partial T_b))
\ee
where $M_a=\set(T_a)$, $M_b=\set(T_b)$, and $h=\epsilon/2$.
\end{prop}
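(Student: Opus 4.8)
The plan is to use the concrete common metric space $Z_{a,b}=X\times[0,h]$ built in Proposition~\ref{prop:Zab}, together with the distance preserving maps $f_a\colon (X,d_a)\to Z_{a,b}$, $f_a(x)=(x,h)$, and $f_b\colon (X,d_b)\to Z_{a,b}$, $f_b(x)=(x,0)$, and to bound the flat distance of the pushforwards directly by exhibiting an explicit filling. Passing to metric completions where necessary (so that the ambient space is complete as Definition~\ref{defn:SWIF} requires), the definition of the SWIF distance gives
\[
d_{SWIF}\big((M_a,d_a,T_a),(M_b,d_b,T_b)\big)\ \le\ d_F^{Z_{a,b}}\big(f_{a\#}T_a,\ f_{b\#}T_b\big),
\]
so it suffices to produce integral currents $A$ of dimension $m$ and $B$ of dimension $m+1$ on $Z_{a,b}$ with $A+\partial B=f_{a\#}T_a-f_{b\#}T_b$ and $\mass(A)+\mass(B)\le h\,2^{(m+1)/2}\big(\mass_{d_b}(T_b)+\mass_{d_b}(\partial T_b)\big)$. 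The integral current space structures $(M_a,d_a,T_a)$ and $(M_b,d_b,T_b)$ with $M_a=\set(T_a)$, $M_b=\set(T_b)$ are guaranteed by the hypotheses and Lemma~\ref{lem:current}.

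For the filling I would take the product current $T_b\times[[0,h]]$, an $(m{+}1)$-dimensional integral current on the product of $(\overline{X},d_b)$ with the interval, and push it forward by the $1$-Lipschitz identity map $id_{a,b}\colon (Z_{a,b},d_{taxi_b})\to (Z_{a,b},d_{Z_{a,b}})$ from Proposition~\ref{prop:Zab}, setting $B_0=id_{a,b\#}(T_b\times[[0,h]])$. By the Ambrosio-Kirchheim boundary formula for products, $\partial(T_b\times[[0,h]])=(\partial T_b)\times[[0,h]]+(-1)^m\big(T_b\times[[\{h\}]]-T_b\times[[\{0\}]]\big)$, and since pushforward commutes with $\partial$,
\[
\partial B_0=id_{a,b\#}\big((\partial T_b)\times[[0,h]]\big)+(-1)^m\big(id_{a,b\#}(T_b\times[[\{h\}]])-id_{a,b\#}(T_b\times[[\{0\}]])\big).
\]
The crucial point is the identification of the two slices. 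On $X\times\{h\}$ the distance $d_{Z_{a,b}}$ restricts to $\min\{d_b,d_a\}=d_a$, so $id_{a,b\#}(T_b\times[[\{h\}]])$ is ``$T_b$ viewed as an integral current on $(X,d_a)$'' in the sense of Definition~\ref{defn:equal-current}, which by hypothesis equals $T_a$; hence that slice pushes to $f_{a\#}T_a$. On $X\times\{0\}$ the distance restricts to $\min\{d_b,\,2h+d_a\}=d_b$, where we use $d_b-d_a\le\epsilon=2h$ from \eqref{eq:ab-Zab-2}, so that slice pushes to $f_{b\#}T_b$. Setting $B=(-1)^mB_0$ and $A=-(-1)^m\,id_{a,b\#}\big((\partial T_b)\times[[0,h]]\big)$ then yields $A+\partial B=f_{a\#}T_a-f_{b\#}T_b$, with the required dimensions.

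It remains to estimate the masses. By the pushforward inequality \eqref{eq-massPush} applied to the $1$-Lipschitz map $id_{a,b}$ we have $\mass(B)\le\mass_{d_{taxi_b}}(T_b\times[[0,h]])$ and $\mass(A)\le\mass_{d_{taxi_b}}\big((\partial T_b)\times[[0,h]]\big)$; the Ambrosio-Kirchheim estimates on the mass of products of metric currents (the dimensional factor $2^{(m+1)/2}$ arising from comparing the taxi-type product metric on $Z_{a,b}$ with a Euclidean product metric, raised to the dimension of the current at hand) then give $\mass(B)\le 2^{(m+1)/2}\,h\,\mass_{d_b}(T_b)$ and $\mass(A)\le 2^{m/2}\,h\,\mass_{d_b}(\partial T_b)\le 2^{(m+1)/2}\,h\,\mass_{d_b}(\partial T_b)$. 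Adding these and combining with the reduction in the first paragraph gives the claimed bound. I expect the substantive difficulty to lie not in the geometry — the cylinder-filling and slice identification are clean once Proposition~\ref{prop:Zab} is in hand — but in the bookkeeping: fixing the sign conventions in the product boundary formula, verifying the mass-of-product estimate with exactly the constant $2^{(m+1)/2}$ in the metric-current setting (where products, slices and the taxi metric must interact as expected), and checking that nothing is lost in passing to completions.
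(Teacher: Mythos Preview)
Your proposal is correct and follows essentially the same strategy as the paper: use the common space $(Z_{a,b},d_{Z_{a,b}})$ from Proposition~\ref{prop:Zab}, take $B$ to be (a pushforward of) the cylinder $T_b\times[[0,h]]$ and $A$ the corresponding $\partial T_b\times[[0,h]]$, identify the two boundary slices with $f_{a\#}T_a$ and $f_{b\#}T_b$ via the restriction of $d_{Z_{a,b}}$ to $X\times\{h\}$ and $X\times\{0\}$, and bound the masses by $2^{(m+1)/2}h$ times $\mass_{d_b}(T_b)$ and $\mass_{d_b}(\partial T_b)$. The only cosmetic difference is that the paper first places the product current on the \emph{isometric} product $(M_b\times[0,h],d_{isom})$ using Portegies--Sormani, where $\mass(T_b\times[0,h])=h\,\mass(T_b)$ exactly, then rescales by $\sqrt{2}$ (picking up the factor $2^{(m+1)/2}$) and pushes forward along the $1$-Lipschitz identity $\sqrt{2}\,d_{isom}\ge d_{taxi_b}\ge d_{Z_{a,b}}$; you instead work directly on the taxi product and invoke a product-mass estimate for the constant. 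Your slice-identification argument (that $d_{Z_{a,b}}$ restricted to $X\times\{h\}$ equals $d_a$, so the top slice is $T_b$ read on $(X,d_a)$, hence $T_a$) is exactly what the paper proves in its verification of \eqref{eq:faTa}--\eqref{eq:psiTaTb}, just stated more succinctly.
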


\begin{proof}
Note that these hypotheses
imply the
hypotheses of Proposition~\ref{prop:Zab}; 
so
there exists a metric
space 
\be
(Z_{a,b}=X\times [0,h], d_{Z_{a,b}})
\textrm{
where } h=\epsilon/2,
\ee
with
\be \label{eq:d_Z-2}
d_{Z_{a,b}}(z_1,z_2)
\le d_{taxi}(z_1,z_2)=
d_b(x_1,x_2)+|t_1-t_2|,
\ee
and
there are distance preserving maps:
\begin{eqnarray}
&f_a:(X, d_a) \to Z_{a,b}&
\textrm{ s.t. } f_a(x)=(x,h),\\
&f_b:(X, d_b) \to Z_{a,b}&
\textrm{ s.t. } f_b(x)=(x,0).
\end{eqnarray}

We will estimate the SWIF distance
by constructing integral currents, $A$ and $B$, on $Z_{a,b}=X\times [0,h]$ such that
\be
\partial B + A= f_{a\#}T_a-f_{b\#}T_b 
\ee
and then bounding the masses of $A$ and $B$.   

We have an integral current space, $(M,d_b,T_b)$.  So the integral current $T_b\times [0,h]$ is defined on the isometric product space, $(M_b\times [0,h], d_{isom})$
defined in
Proposition 3.7 of \cite{PS-properties}  by Portegies-Sormani.   This proposition states
that
\be \label{eq:partialT1} 
\partial(T_b\times [0,h])=-\partial T_b \times [[(0,h)]]
+ T_b \times \partial [[(0,h)]],
\ee
and
\be \label{eq:partialT2} 
T_b \times \partial [[(0,h)]]=\psi_{h\#}T_b-\psi_{0\#}T_b
\ee
where
$\psi_t(x)=(x,t)\in X\times [0,h]$
and
\be
\mass_{d_{isom}}(T_b\times [0,h])=h \,\mass(T_b),
\ee
where $d_{isom}$ is the standard isometric product:
\be
d_{isom}((x_1,t_1),(x_2,t_2))=\sqrt{(d_b(x_1,x_2))^2+(t_1-t_2)^2}.
\ee

We have
$
M_{b}\times [0,h] \subset X\times [0,h]
=Z_{a,b}
$,
however
$d_{Z_{a,b}})$ is not the standard isometric product distance.
So we rescale the isometric product, compare to the taxi product, and apply (\ref{eq:d_Z-2}) to see that:
\be
\sqrt{2} \, d_{isom}(z_1,z_2)\ge d_{taxi}(z_1,z_2)\ge d_{Z_{a,b}}(z_1,z_2).
\ee
Note that $T_b\times [0,h]$ is an integral current on the
rescaled isometric product
space, $(X\times [0,h], \sqrt{2}d_{isom})$,
satisfying (\ref{eq:partialT1})-(\ref{eq:partialT2})
except now the mass rescales:
\be
\mass_{\sqrt{2} d_{isom}}(T_b\times [0,h])=2^{\frac{m+1}{2}}\, h \mass_{d_b}(T_b). 
\ee
We can apply the Lipschitz one identity map, 
\be
\iota: (X\times [0,h], \sqrt{2}d_{isom})\to (X\times [0,h], \sqrt{2}d_{Z_{a,b}}),
\ee
to define the pushforward integral current on $Z_{a,b}$:
\be
B=\iota_{\#}(T_b \times  [[(0,h)]])
\ee
whose mass satisfies
\be
\mass_{d_{Z_{a,b}}}(B) 
\le 2^{\frac{m+1}{2}}\, h \,\mass_{d_b}(T_b).
\ee
Similarly, we define
\be
A=\iota_{\#}(\partial T_b \times [[(0,h)]])
\ee
whose mass satisfies
\be
\mass_{d_{Z_{a,b}}}(A) 
\le 2^{\frac{m}{2}}\, h \,\mass_{d_b}(\partial T_b).
\ee
By the pushforward of (\ref{eq:partialT1}) we have
\be \label{eq:partialT3} 
\partial B=-A + \iota_{\#}(\,T_b \times \partial [[(0,h)]]\,)
\ee
and by the pushforward of (\ref{eq:partialT2}) we have
\be \label{eq:partialT4} 
\iota_{\#}(\,T_b \times \partial [[(0,h)]]\,)=
\iota_{\#}\psi_{h\#}T_b-\iota_{\#}\psi_{0\#}T_b.
\ee
Thus we need only show that
\be \label{eq:faTa}
f_{a\#}T_a=\iota_{\#}\psi_{h\#}T_b
\ee
and
\be
f_{b\#}T_b=\iota_{\#}\psi_{0\#}T_b
\ee
as integral currents on $(Z_{a,b},d_{Z_{a,b}})$.
The latter easily follows from,
\be
f_b(x)=(x,0)=\iota(x,0)=\iota(\psi_0(x)).
\ee
We also have
\be
f_a(x)=(x,h)=\iota(x,h)=\iota(\psi_h(x)).
\ee
which gives
\be 
f_{a\#}T_a=\iota_{\#}\psi_{h\#}T_a.
\ee
So we need only prove
\be \label{eq:psiTaTb}
\iota_{\#}\psi_{h\#}T_a=\iota_{\#}\psi_{h\#}T_b
\ee
as integral currents on $(Z_{a,b},d_{Z_{a,b}})$.

Consider any Lipschitz tuple $(\pi_0,...,\pi_{m})$
on $(Z_{a,b},d_{Z_{a,b}})$. Recall that
\be
\iota_{\#}\psi_{h\#}T_a(\pi_0,...,\pi_{m})
=T_a(\pi_0\circ \iota\circ \psi_h,...,\pi_m\circ \iota\circ \psi_h).
\ee
For each Lipschitz function
$\pi_i:(Z_{a,b},d_{Z_{a,b}}) \to {\mathbb{R}}$ we
claim that 
\be \label{eq:actually}
\pi_i\circ \iota\circ \psi_h: (X,d_a) \to {\mathbb{R}}
\textrm{ is Lipschitz}.
\ee
This follows from the fact that $f_a$ is distance
preserving and so
\begin{eqnarray}
|\pi_i(\iota(\psi_h(x_1)))-\pi_i(\iota(\psi_h(x_2)))|
&=&|\pi_i(x_1,h)-\pi_i(x_2,h)|\\
&\le& K \,d_{Z_{a,b}}((x_1,h),(x_2,h))\\
&=& K \,d_{Z_{a,b}}(f_a(x_1),f_a(x_2))\\
&=&K\,d_a(x_1,x_2).
\end{eqnarray}
So
$(\pi_0\circ \iota\circ \psi_h,...,\pi_m\circ \iota\circ \psi_h)$ is
a Lipschitz tuple on $(X,d_a)$.
Combining this with the hypothesis that
``$T_a=T_b$ as integral currents on $(X,d_a)$'' we have
\be
T_a(\pi_0\circ \iota\circ \psi_h,...,\pi_m\circ \iota\circ \psi_h)=
T_b(\pi_0\circ \iota\circ \psi_h,...,\pi_m\circ \iota\circ \psi_h).
\ee
Thus, by the definition of pushforward, we have (\ref{eq:psiTaTb}).
\end{proof}

We now apply Proposition~\ref{prop:unif-cmpct}, Proposition~\ref{prop:Tinfty}, and
Proposition~\ref{prop:TaTb} to
prove Theorem~\ref{thm:SWIF-cmpct}:

\begin{proof}
First note that the hypotheses of
Theorem~\ref{thm:SWIF-cmpct} imply that
the hypotheses of Proposition~\ref{prop:unif-cmpct} and Proposition~\ref{prop:Tinfty}.
By Proposition~\ref{prop:unif-cmpct},
we have uniform convergence of
compact metric spaces
$(X, d_j)$ to a compact limit
$(X,d_\infty)$:
\be
d_\infty(x,y)-\epsilon_j \le d_j(x,y) \le d_\infty(x,y)\quad \forall x,y\in X
\ee
where $\epsilon_j \to 0$ as $j\to \infty.$

By Proposition~\ref{prop:Tinfty}
we have an integral current space,
\be
(M_\infty, d_\infty, T_\infty)
\textrm{ where }
M_\infty=\set(T_\infty)\subset X
\ee
with $Cl(M_\infty)=X$
and
``$T_\infty=T$ as integral currents on
$(X,d_j)$'' for all $j\in {\mathbb N}$. 

Next we apply Proposition~\ref{prop:TaTb}
with 
\be
(X,d_a,T_a)=(X,d_j,T) 
\textrm{ and }
(X,d_b,T_b)=(X,d_\infty,T_\infty)
\ee
where $\set(T_j)=X$ and $\set(T_\infty)=M_\infty$
and 
\be
h=h_j=2\epsilon_j \to 0 \textrm{ as }j \to \infty.
\ee
Thus by Proposition~\ref{prop:TaTb}, we have
\be
d_{SWIF}((X,d_j,T),(M_\infty, d_\infty,T_\infty))
\le 2^{\frac{m+1}{2}} h_j(\mass_{d_\infty}(T_\infty)+
\mass_{d_\infty}(\partial T_\infty)).
\ee
Thus by (\ref{eq:mass-Tj}) of Proposition~\ref{prop:Tinfty} we have,
\be
d_{SWIF}((X,d_j,T),(M_\infty, d_\infty,T_\infty))
\le 2^{\frac{m+1}{2}} \,h_j\,(V_0+A_0)\to 0
\ee
and we may conclude SWIF convergence of the
original sequence.

Finally we apply the fact that
$\mass_{d_j}(T)$
is monotone increasing by
Lemma~\ref{lem:current} and
the fact that
a subsequence converges to
$\mass_{d_\infty}(T_\infty)$ 
by Proposition~\ref{prop:Tinfty}
to
conclude that mass converges.
Thus we have volume preserving
intrinsic flat convergence of the original sequence.
\end{proof}

\section{Open Problems}
\label{sect:Open}

Let us consider a monotone increasing sequence of integral current spaces, $(X,d_j,T)$, with $d_j\le d_{j+1}$ with the following three hypotheses
\be\label{eq:Wenger-hyp}
\diam_{d_j}(X)\le D_0,
\quad 
\mass_{d_j}(T) \le V_0,
\quad
\mass_{d_j}(\partial T) \le A_0.
\ee
By Lemma~\ref{lem:diam}, we have a pointwise
limit, $(X,d_\infty)$.
We know that without the compactness assumption in our Theorem~\ref{thm:Riem}
and Theorem~\ref{thm:SWIF-cmpct},
that we may not have Gromov-Hausdorff Convergence of the sequence.  See Example~\ref{ex:not-unif}.

However, we know that by Wenger's Compactness Theorem of
\cite{Wenger-compactness} that any sequence of
integral current spaces satisfying (\ref{eq:Wenger-hyp})
has a
subsequence converging to a SWIF limit,
\be
(X,d_{j_k},T) \SWIFto
(X'_\infty,d'_\infty,T'_\infty),
\ee  
where the SWIF limit might be the zero space.

Combining the monotonicity assumptions with (\ref{eq:Wenger-hyp}), 
perhaps one can show one or any of the following:
\begin{itemize}
\item the sequence itself SWIF converges without needing a subsequence possibly to a nonzero limit space,
\item there is  $\mathcal{VF}$ convergence
to the limit space, 
\item
there is a distance preserving map from the
SWIF limit, $(X'_\infty, d'_\infty)$, into
the pointwise limit, $(X, d_\infty)$, 
with possibly some relationship between the currents, $T_\infty'$ and
$T$.
\end{itemize}
Note that assuming any of the following additional hypotheses might help:
\begin{itemize}
\item The integral current spaces, $(X,d_j,T)$, are
Riemannian, $(M,d_{g_j},[[M]])$,
possibly without boundary.
\item 
The integral current spaces, $(X,d_j,T)$, have empty boundary, $\partial T=0$, so
we don't have to use $A_j$ in the proof of SWIF convergence.
\item The integral current structure, $T$, is assumed
to be an integral current on the pointwise limit, $(X,d_\infty)$. 
\end{itemize}
It would be interesting to find counter examples to any or all of these statements as well.

\bibliographystyle{alpha}
\bibliography{PSbibliography}

\end{document}